\documentclass[11pt,leqno]{amsart}
\usepackage{amsmath, amscd, amsthm, amssymb, graphics, xypic, mathrsfs, setspace, fancyhdr, bm, pdfsync, enumitem, mathptmx}
\usepackage[usenames, dvipsnames, svgnames, table]{xcolor}
\usepackage[letterpaper,top=1.05in, bottom=1.05in, left=1.05in, right=1.05in]{geometry}
\usepackage[colorlinks=true]{hyperref}

\newcommand{\Spec}{\operatorname{Spec}}

\newcommand{\coker}{\operatorname{coker}}

\newcommand{\longtwoheadrightarrow}{\relbar\joinrel\twoheadrightarrow}

\newcommand{\GW}{\mathbf{GW}}

\newcommand{\Hom}{\operatorname{Hom}}

\newcommand{\real}{{\mathbb R}}

\newcommand{\Z}{{\mathbb Z}}

\newcommand{\A}{{\mathbb A}}

\newcommand{\Osc}{{\mathscr{O}}}
\newcommand{\Esc}{{\mathscr{E}}}

\newcommand{\aone}{{\mathbb A}^1}
\newcommand{\pone}{{\mathbb P}^1}
\newcommand{\GL}{{\mathrm{GL}}}

\newcommand{\gm}[1]{{{\mathbb G}_{m}^{#1}}}
\newcommand{\Gm}{{\gm{}}}
\renewcommand{\Lc}{{\mathcal L}}

\newcommand{\et}{\text{\'et}}

\newcommand{\bpi}{\bm{\pi}}
\newcommand{\piaone}{{\bpi}^{\aone}}

\newcommand{\Nis}{{\operatorname{Nis}}}
\newcommand{\Zar}{\operatorname{Zar}} 

\newcommand{\CHW}{{\widetilde{\mathrm{CH}}}}
\newcommand{\CH}{{\mathrm{CH}}}

\newcommand{\Shv}{{\mathrm{Shv}}}
\newcommand{\Sm}{\mathrm{Sm}}

\newcommand{\Spc}{\mathrm{Spc}}
\newcommand{\Ab}{\mathrm{Ab}}

\newcommand{\K}{{{\mathbf K}}}

\newcommand{\KMW}{\K^{\mathrm{MW}}}
\newcommand{\KM}{\K^{\mathrm M}}
\newcommand{\Pic}{\operatorname{Pic}}

\newcommand{\Hr}{{\mathrm {H}}}

\newcommand{\Ibf}{\mathbf{I}}

\newcommand{\Rb}{\mathbb{R}}

\newcommand{\Kr}{\mathrm{K}}
\newcommand{\GWr}{\mathrm{GW}}

\newcommand{\Abf}{\mathbf{A}}
\newcommand{\Cr}{\mathrm{C}}
\newcommand{\RS}{\mathrm{RS}}

\newcommand{\Zb}{\mathbb{Z}}
\newcommand{\Cb}{\mathbb{C}}
\newcommand{\Bbf}{\mathbf{B}}
\newcommand{\Kbf}{\mathbf{K}}

\newcommand{\Er}{\mathrm{E}}

\newcommand{\Sr}{\mathrm{S}}

\newcommand{\Abb}{\mathbb{A}}

\newcommand{\Vsc}{\mathscr{V}}

\newcommand{\BGL}{\operatorname{BGL}}
\newcommand{\Mr}{\mathrm{M}}
\newcommand{\Hsc}{\mathcal{H}}
\newcommand{\im}{\operatorname{im}}
\newcommand{\Gbf}{\mathbf{G}}

\newcommand{\Um}{\operatorname{Um}}
\newcommand{\Br}{\mathrm{B}}
\newcommand{\diag}{\operatorname{diag}}
\newcommand{\MW}{\mathrm{MW}}

\newcommand{\Map}{\operatorname{Map}}

\newcommand{\Hbf}{\mathbf{H}}
\newcommand{\Qbf}{\mathbf{Q}}

\newcommand{\Tbf}{\mathbf{T}}
\newcommand{\Sbf}{\mathbf{S}}
\newcommand{\Fbf}{\mathbf{F}}

\newcommand{\Dr}{\mathrm{D}}

\renewcommand{\setminus}{\smallsetminus}

\newcounter{intro}
\theoremstyle{plain}
\newtheorem{thm}{Theorem}[section]

\newtheorem{lem}[thm]{Lemma}
\newtheorem{cor}[thm]{Corollary}
\newtheorem{prop}[thm]{Proposition}
\newtheorem*{claim*}{Claim} 

\newtheorem*{question*}{Main question}

\newtheorem*{thm*}{Theorem}
\newtheorem*{problem*}{Problem}

\newtheorem{thmintro}{Theorem}

\theoremstyle{definition}
\newtheorem{defn}[thm]{Definition}

\theoremstyle{remark}
\newtheorem{rem}[thm]{Remark}
\newtheorem{remintro}[thmintro]{Remark}

\newtheorem{ex}[thm]{Example}

\numberwithin{equation}{section}

\usepackage{tikz}
\usetikzlibrary{arrows,matrix,cd}

\usepackage{stmaryrd}

\begin{document}
\pagestyle{fancy}
\renewcommand{\sectionmark}[1]{\markright{\thesection\ #1}}
\fancyhead{}
\fancyhead[LO,R]{\bfseries\footnotesize\thepage}
\fancyhead[LE]{\bfseries\footnotesize\rightmark}
\fancyhead[RO]{\bfseries\footnotesize\rightmark}
\chead[]{}
\cfoot[]{}
\setlength{\headheight}{1cm}

\title[Suslin's cancellation conjecture on varieties with few real points]{{\bf Suslin's cancellation conjecture on real affine varieties with few real points}}
\date{}
\author{Sourjya Banerjee}
\address{Sourjya Banerjee \\
Department of Mathematics \\ 
Indian Institute of Technology Hyderabad \\
 Kandi, Sangareddy, Telangana \\
 502284, India} 
\email{\href{mailto:sourjya@math.iith.ac.in, sourjya91@gmail.com}{sourjya@math.iith.ac.in}}
\email{\href{mailto:sourjya91@gmail.com}{sourjya91@gmail.com}}
\urladdr{\url{https://sites.google.com/view/sourjyabanerjee/home}}
\author{Jean Fasel}
\address{Jean Fasel \\
Universit\'e Grenoble Alpes \\
Institut Fourier \\
CS 40700 \\
38058 Grenoble Cedex 9 \\
France} 
\email{\href{mailto:Jean.Fasel@univ-grenoble-alpes.fr}{Jean.Fasel@univ-grenoble-alpes.fr}}
\urladdr{\url{https://www-fourier.univ-grenoble-alpes.fr/~faselj/index.html}}
\author{Samuel Lerbet}
\address{Samuel Lerbet \\
DMA, École normale supérieure, Université PSL, CNRS\\
75005 Paris\\
France} 
\email{\href{mailto:samuel.lerbet@ens.psl.eu}{samuel.lerbet@ens.psl.eu}}
\urladdr{\url{https://sites.google.com/view/samuel-lerbet-fr/accueil}}

\maketitle

\begin{abstract}
We study Suslin's cancellation conjecture on real affine varieties whose real locus is empty or, more generally, of small cohomological dimension.
\end{abstract}

\section{Introduction}

Let $A$ be a ring (unless explicitly stated otherwise, all rings are commutative and all modules are finitely generated in the sequel). To achieve a complete understanding of the theory of projective modules over $A$, one can divide the question in two problems: the computation of the stable theory, namely of the $\Kr$-theory group $\mathrm{\Kr}_0(A)$, which is relatively accessible as part of a cohomology theory; and the bridging of the gap between this stable problem and the unstable one, which is generally considerably more difficult. This latter problem can essentially be reduced to two questions.
\begin{itemize}
    \item The \emph{splitting} problem: given a projective $A$-module $P$, under which conditions does $P$ split off a free rank $1$ summand, that is, does there exist an $A$-module $P'$ and an isomorphism $P\simeq P'\oplus A$?
    \item The \emph{cancellation} problem: given a projective $A$-module $P$, under which conditions does $P\oplus A^{\oplus m}\simeq Q\oplus A^{\oplus m}$ imply $P\simeq Q$ for any projective $A$-module $Q$? If this implication holds, we say that $P$ is \emph{cancellative}.
\end{itemize}
In \cite{asokSplittingVectorBundles2025}, in joint work with A. Asok, the second and third authors extensively studied the splitting problem for smooth affine algebras over the field $\Rb$ of real numbers. The conclusion was that the situation is rather delicate. In particular, for vector bundles of corank $1$, it is \emph{not} controlled by the combination of the invariants which govern splitting over algebraically closed fields, namely Chern classes (\cite{murthyZeroCyclesProjective1994} and \cite{asokCohomologicalClassificationVector2014, asokSplittingVectorBundles2014, asokP1stabilizationUnstableMotivic}), and of those given by topology over the real locus. One of the positive upshots was however that if the latter disappear in the sense that the real locus has small cohomological dimension (for instance is empty), then Chern classes are sufficient to detect the splitting of a free rank $1$ summand for vector bundles of corank $\leq 1$. This idea was pursued in \cite{lerbetCohomologicalClassificationVector2026} where it was observed that on smooth real affine surfaces and threefolds of suitably cohomologically small real locus, the classification of vector bundles by Chern classes obtained in \cite{kumarAlgebraicCyclesVector1982} and \cite{asokCohomologicalClassificationVector2014} is still valid. Therefore one might wonder whether the same phenomenon occurs for the cancellation problem.

Over algebraically closed fields, recall that Suslin solved this problem in \cite{suslinCancellationTheoremProjective1977} for modules of rank $d$, where $d$ is the dimension of the base ring. More precisely, let $A$ be an affine algebra of dimension $d$ over an algebraically closed field $k$. Suslin then showed that every rank $d$ projective $A$-module is cancellative. Under the assumption that $A$ is normal (in fact smooth if $d=3$), it was shown in \cite{faselStablyFreeModules2012} that $A^{\oplus (d-1)}$ is cancellative if $(d-1)!$ is invertible in $k$, so that stably free modules of rank $d-1$ are free in this situation. The cancellation problem was essentially completely solved in the smooth case by the second author in \cite{faselSuslinsCancellationConjecture2025} where it was shown that if $d!$ is invertible in $k$ and if $A$ is smooth over $k$, then every rank $d-1$ projective $A$-module is cancellative.

Coming back to the situation over $\Rb$, the first author gave a positive result on the cancellation problem in \cite{banerjeeZeroCyclesMennicke2025} where it was shown that if $A$ is a real affine algebra of dimension $d$ without real maximal ideals, then rank $d$ projective modules are cancellative. In \cite{dasOrbitSpacesUnimodular2018}, the authors gave a count of rank $d$ stably free $A$-modules when $A$ is additionally smooth and $X(\Rb)$ is non empty and orientable which shows that these modules are free if $X(\Rb)$ has no compact connected component. In the present paper, we extend these results in the following way:

\begin{thmintro}
Let $A$ be a smooth $\Rb$-algebra of dimension $d\geq 3$; set $X=\Spec (A)$. Suppose moreover that $\Hr^d(X(\Rb),\Zb/2)=0$ (resp. and that $\Hr^{d-1}(X(\Rb),\Zb(\Lc(\Rb)))=0$ for every line bundle $\Lc$ on $X$). Then rank $d$ (resp. rank $d-1$) projective $A$-modules are cancellative.
\end{thmintro}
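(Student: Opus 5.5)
Our plan is to use $\aone$-homotopy theory and obstruction theory, as in \cite{faselSuslinsCancellationConjecture2025}, and to show that the obstruction groups that differ from the algebraically closed case are controlled by the cohomology of the real locus.

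\textbf{Reduction.} By Morel's representability theorem on smooth affine schemes, isomorphism classes of rank $n$ bundles on $X$ are $[X,\BGL_n]_{\aone}$. Cancellation of a rank $n$ bundle $P$ reduces to the following statement: the fibre of the stabilization map $[X,\BGL_n]_{\aone}\to[X,\BGL_{n+1}]_{\aone}$ over $P\oplus A$ is a single point.
\begin{itemize}
\item For $n\geq d+1$ this holds by Bass, so only one stabilization step matters.
\item The fibre is governed by the $\aone$-fibre sequence $\Abb^{n+1}\setminus 0\to\BGL_n\to\BGL_{n+1}$.
\item Since $\Abb^{n+1}\setminus 0$ is $\aone$-$(n-1)$-connected, the Moore--Postnikov tower yields a surjection from the orbit set of $\piaone_n(\Abb^{n+1}\setminus0)=\KMW_{n+1}$ acting through $\Hr^n(X,\KMW_{n+1}(\det))$.
\item For $n=d-1$ there is a second step involving $\Hr^d(X,\piaone_{d}(\Abb^{d}\setminus 0))$.
\end{itemize}

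\textbf{Rank $d$.} We must show that the action of $\Hr^d(X,\KMW_{d+1}(\Lc))$ on the fibre is trivial modulo the automorphisms of $P\oplus A$. Via the exact sequence $0\to\Ibf^{d+2}\to\KMW_{d+1}\to\KM_{d+1}\to0$, the group splits into a Milnor part and an $\Ibf^{d+2}$ part.
\begin{itemize}
\item \emph{Milnor part.} $\Hr^d(X,\KM_{d+1})$ is handled by Suslin's argument. This group is divisible: the residue fields at closed points are $\Rb$ or $\Cb$, and one uses transfers together with $\Cb$ being algebraically closed. The action of elementary automorphisms kills it, as in \cite{suslinCancellationTheoremProjective1977, banerjeeZeroCyclesMennicke2025}.
\item \emph{$\Ibf$ part.} By Jacobson's theorem, $\Hr^d(X,\Ibf^{j}(\Lc))\cong\Hr^d(X(\Rb),\Zb(\Lc(\Rb)))$ for $j$ large, here $j\ge d+1$. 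This is $0$ when $\Hr^d(X(\Rb),\Zb/2)=0$, by Poincaré-duality and universal-coefficient considerations on the $d$-manifold $X(\Rb)$ (top cohomology is $2$-torsion-free controlled). The contributions of $\Ibf^{d+2}$ and its successive quotients $\bar\Ibf^{j}$ reduce to $\Hr^d(X(\Rb),\Zb/2)$ via the Bär/Jacobson comparison, so they vanish.
\end{itemize}
Hence the fibre is a point.

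\textbf{Rank $d-1$.} We follow \cite{faselSuslinsCancellationConjecture2025}. There, cancellation was deduced from the following facts.
\begin{itemize}
\item \emph{Primary obstruction.} This is $\Hr^{d-1}(X,\KMW_d(\Lc))$ acting through the Euler class and transfer maps. Its image in the orbit set is killed by automorphisms and by divisibility of the Milnor part, using $d!$ invertible. Over $\Rb$ the only new contribution is the $\Ibf^{d+1}$ part, and by Jacobson it is $\Hr^{d-1}(X(\Rb),\Zb(\Lc(\Rb)))$, which vanishes by hypothesis.
\item \emph{Secondary obstruction.} This is $\Hr^d(X,\piaone_{d-1}(\Abb^d\setminus 0)$-stage$)$. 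Its relevant quotient is $\Hr^d(X,\Fbf_{d}^{\text{stuff}})$, which sits in sequences with $\KM$-type sheaves (a quotient of $\KM_{d+2}/24$ together with $\Ibf^{d+2}$-type pieces). The Milnor-type pieces are handled as in \cite{faselSuslinsCancellationConjecture2025}, using $d!$ invertibility and divisibility of Chow-type groups of zero-cycles on real varieties modulo torsion. The $\Ibf$-type pieces reduce to $\Hr^d(X(\Rb),\Zb/2)=0$.
\end{itemize}

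\textbf{Main obstacle.} We expect the hardest step to be the precise analysis of $\piaone_{d}(\Abb^d\setminus0)$ over $\Rb$, namely identifying its $\Ibf$-contributions and checking that their top Nisnevich cohomology is computed by real-locus cohomology. Our first attempt would use Asok--Fasel's description via $\GW^{d}_{d+1}$ and the real cycle class map to singular cohomology of $X(\Rb)$, with degree $d$ cohomology on a $d$-dimensional affine scheme killing higher corrections.
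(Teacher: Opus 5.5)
You have the right architecture: Moore--Postnikov towers for the stabilization maps, Jacobson's theorem for the $\Ibf$-parts, and the vanishing of $\Hr^d(X,\pi_d^{\Abb^1}(\Abb^d\smallsetminus\{0\})(\Lc))$ as the key input in corank $1$. But you misplace where the real-locus hypotheses are used, and a central step fails as you justify it.

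\textbf{Divisibility of the Milnor parts.} You claim $\Hr^d(X,\Kbf_{d+1}^\Mr)$ is divisible because closed points have residue field $\Rb$ or $\Cb$ and one can use transfers. In corank $1$ you also say that ``over $\Rb$ the only new contribution is the $\Ibf^{d+1}$ part''. Both claims are wrong.
\begin{itemize}
\item Transfer along $X_\Cb\to X$ only gives \emph{odd} divisibility, since $\pi_*\pi^*$ is multiplication by $2$.
\item At a real point, $\{-1\}\in\Kr_1^\Mr(\Rb)$ is not $2$-divisible. Right exactness of $\Hr^d$, the norm residue isomorphism and Colliot-Th\'el\`ene--Scheiderer give $\Hr^d(X,\Kbf_{d+1}^\Mr)/2\cong\Hr^d(X(\Rb),\Zb/2)$. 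This is $\Zb/2$ for $X=\Spec\Rb[x_0,\dots,x_d]/(x_0^2+\cdots+x_d^2-1)$.
\end{itemize}
The Suslin-matrix argument only produces $d!\,\kappa_*$ (resp. $(d-1)!\,\kappa_*$). So the $2$-divisibility of these Milnor $\Kr$-cohomology groups is exactly where the hypotheses enter.
\begin{itemize}
\item In rank $d$ you need $\Hr^d(X,\Kbf_n^\Mr/2)=0$ for $n\geq d$, which is what $(*)$ gives.
\item In corank $1$ you also need $\Hr^{d-1}(X,\Kbf_d^\Mr/2)\cong\Hr^d(X(\Rb),\Zb/2)\oplus\Hr^{d-1}(X(\Rb),\Zb/2)$ to vanish, and then Lemma~\ref{lem:unique_divisibility_vanishing}. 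You must therefore extract $\Hr^{d-1}(X(\Rb),\Zb/2)=0$ from the \emph{twisted} hypothesis via the Bockstein-type sequence of Remark~\ref{rem:double_star_weaker}. Your proposal never does this.
\end{itemize}
The same issue affects your secondary stage. The top cohomology of the $\Kbf_{d+2}^\Mr/24$ piece and of the $\GW_{d+1}^d(\Lc)$ piece of $\pi_d^{\Abb^1}(\Abb^d\smallsetminus\{0\})$ vanishes only because $\Hr^d(X,\Kbf_m^\Mr/2)=0$ under $(*)$ (Proposition~\ref{prop:vanishing_hd_GW_d+1_d}). ``Divisibility modulo torsion'' discards precisely the symbols $\{-1,\dots,-1\}$ at real points that cause the trouble.

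\textbf{The corank $1$ mechanism.} Your step ``killed by automorphisms'' is where the real work lies, and you give no mechanism for it.
\begin{itemize}
\item The paper stabilizes to $\BGL$, so the acting group is $[X_+,\GL]_{\Abb^1}=\Kr_1(X)$. Its connecting maps are computed by Du's Chern class formula.
\item In that setup the second-stage group is \emph{not} zero: it is $\Hr^d(X,\pi_d^{\Abb^1}(F_{d-1})(\Lc))\cong\Hr^d(X,\Kbf_{d+1}^\Mr)$. The vanishing of $\Hr^d(X,\pi_d^{\Abb^1}(\Abb^d\smallsetminus\{0\})(\Lc))$ only serves to prove this isomorphism (Lemma~\ref{lem:iso_homotopy_sheaves_of_fibres}).
\item Surjectivity of the second connecting map needs the loops $t\circ(S_{d+1})_*(\mu)$ to lift to loops in $\Map(X_+,E_{d-1})$. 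This holds because $c_m((S_{d+1})_*\mu)=0$ for $m\leq d$ (Lemma~\ref{lem:corank_one_trivial_fibre_second_stage}).
\end{itemize}
In your $\BGL_{d-1}\to\BGL_d$ setup the second stage is indeed trivial, but the difficulty moves to the first stage. There you need $\pi_0(\Omega_{P\oplus A}\Map(X_+,\BGL_d))$, i.e. homotopy classes of automorphisms of the rank $d$ bundle rather than $\Kr_1(X)$, to act transitively on $\Hr^{d-1}(X,\Kbf_d^\MW(\Lc))$. A class $M\in\Kr_1(X)$ lifts to such an automorphism only if $\Delta_d(\xi)(M)\in\Hr^d(X,\Kbf_{d+1}^\Mr)$ vanishes (notation of Lemma~\ref{lem:injectivity_corank_0}). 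For a Suslin-matrix class $M=(S_d)_*\mu$, Du's formula reduces this to $c_d(M)\,c_1(\Lc)$, which need not be zero. Some version of this Chern-class bookkeeping, together with $d!$-divisibility of $\Hr^d(X,\Kbf_{d+1}^\Mr)$, is indispensable and is absent from your sketch.
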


The assumption that a smooth manifold $M$ of dimension $d$ satisfies $\Hr^d(M,\Zb/2)=0$ is equivalent to the assertion that $M$ does not have compact connected components. Thus our result extends the cancellability of $A^{\oplus d}$, which is a consequence of the results of \cite{dasOrbitSpacesUnimodular2018}, to all rank $d$ projective $A$-modules.

\begin{remintro}
Mohan Kumar showed in \cite{kumarStablyFreeModules1985} that for every prime number $p$, there exists a smooth complex affine algebra $A$ of dimension $p+2$ such that $A^{\oplus p}$ is not cancellative. Consequently, this result (or the second author's in \cite{faselSuslinsCancellationConjecture2025}) cannot be improved in general.
\end{remintro} 

To prove this theorem, we essentially follow the method of \cite{faselSuslinsCancellationConjecture2025}. In this paper, using the general technique of motivic obstruction theory, the second author reduces the cancellability of rank $d-1$ projective modules to the following technical result, perhaps of independent interest.

\begin{thmintro}
Let $d\geq 3$ and let $X$ be a smooth real affine variety of dimension $d$; suppose that the group $\Hr^d(X(\Rb),\Zb/2)$ vanishes. Then $\Hr_\Nis^d(X,\pi_d^{\Abb^1}(\Abb^d\smallsetminus \{0\})(\Lc))=0$ for every line bundle $\Lc$ on $X$.
\end{thmintro}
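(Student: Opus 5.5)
We follow the strategy of \cite{faselSuslinsCancellationConjecture2025} and replace the vanishing results valid over algebraically closed fields by real-étale/real-cycle-class arguments. The first step is to recall the structure of the sheaf $\pi_d^{\Abb^1}(\Abb^d\smallsetminus\{0\})$, following Asok--Fasel. There is an exact sequence of strictly $\Abb^1$-invariant sheaves
\[
0\to \Fbf_{d}\to \pi_d^{\Abb^1}(\Abb^d\smallsetminus\{0\})\to \Gbf_{d}\to 0,
\]
and we twist it by $\Lc$. Here $\Gbf_d$ is a quotient of $\KMW_{d+1}$-type sheaves, namely $\GW^{d}_{d+1}$ when $d$ is even and a quotient of $\KMW_{d+1}$ otherwise. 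The sheaf $\Fbf_d$ receives a surjection from a sheaf of the form $\KM_{d+2}/24$ (or is a quotient of a sheaf built from $\KM_{d+2}$ and $\Ibf^{d+?}$-type pieces). Since $X$ has Nisnevich cohomological dimension $d$, $\Hr^d_\Nis$ is right exact. It therefore suffices to show that $\Hr^d_\Nis(X,-)$ vanishes on each graded piece, or on sheaves surjecting onto them.

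For the Milnor $\KM$-pieces we use Gersten resolutions. The group $\Hr^d(X,\KM_{n}/m)$ for $n\geq d+1$ is a quotient of $\bigoplus_{x\in X^{(d)}}\KM_{n-d}(k(x))/m$ with $n-d\geq 1$. For closed points with residue field $\Cb$ we use divisibility of $\KM_{\geq1}(\Cb)$, which kills these classes mod $m$. For real points we use Bloch--Kato/Milnor conjecture type results: $\KM_{j}(k)/2^r$ is related to Galois cohomology, and for $j\geq \operatorname{cd}_2+1$ one gets a comparison with the real spectrum. Concretely, the key input is the result (Jacobson, Colliot-Thélène--Scheiderer) that for $n>d$,
\[
\Hr^d_\Nis(X,\Ibf^{n}(\Lc))\simeq \Hr^d(X(\Rb),\Zb(\Lc(\Rb)))
\]
and $\Hr^d(X,\KM_n/2)\simeq \Hr^d(X(\Rb),\Zb/2)$. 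The sheaves $\KMW_{n}(\Lc)$ sit in the fibre square of $\KM_n$ and $\Ibf^n(\Lc)$ over $\KM_n/2$. The $\KM$-part with odd torsion vanishes as over $\Cb$, because at a real closed point $\KM_{\geq1}(\Rb)/m$ is zero for odd $m$. The $2$-primary part reduces to $\Hr^d(X(\Rb),\Zb/2)=0$. For the $\Ibf^n(\Lc)$ pieces, note that a $d$-manifold without compact components has $\Hr^d(X(\Rb),\Zb(\Lc(\Rb)))=0$ as well. Hence $\Hr^d(X,\KMW_{n}(\Lc))\to\Hr^d(X,\KM_n)$ is an isomorphism onto a group which is itself killed as above: $\Hr^d(X,\KM_{n})$ for $n>d$ is generated by $\KM_{\geq1}$ of residue fields $\Cb$ or $\Rb$, whose classes are divisible or $2$-torsion detected by the real locus.

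The main obstacle is the sheaf $\Fbf_d$ (and the precise shape of $\Gbf_d$ in odd $d$). Its contributions involve $\KM_{d+2}/24$, and a priori also $\Ibf$-type terms whose top cohomology is governed by the real locus. We expect to handle it by showing that the relevant surjections from $\KM_{d+2}/24$ (resp. $\KMW_{d+2}$-quotients) induce surjections on $\Hr^d$, and that the source groups vanish by the above. For $\KM_{d+2}/24$, the odd part is $3$-torsion and vanishes since $\KM_2(\Rb)/3=\KM_2(\Cb)/3=0$. The $8$-part reduces by dévissage along $\KM/2\to\KM/4\to\KM/8$ to $\Hr^d(X,\KM_{d+2}/2)\simeq\Hr^d(X(\Rb),\Zb/2)=0$. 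Finally we assemble these via the long exact sequences, using $\Hr^{d+1}_\Nis=0$.
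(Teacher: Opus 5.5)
Your treatment of the torsion part matches the paper. Right exactness of $\Hr^d$ gives it, together with $\Hr^d(X,\Kbf^\Mr_n/m)=0$ for $n>d$ (odd $m$ at closed points; $m=2$ via Colliot-Th\'el\`ene--Scheiderer and $(*)$) and $\Hr^d(X,\Ibf^n(\Lc))\cong\Hr^d(X(\Rb),\Zb(\Lc(\Rb)))=0$. This disposes of $\Kbf^\Mr_{d+2}/24$ and, for $d=3$, of $\Sbf_5$, $\Tbf_5(\Lc)$ and $\Fbf_5(\Lc)$.

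\textbf{The gap is the non-torsion quotient.} First, you misidentify it. Parity plays no role: for $d=3$ the quotient is $\GW^3_4$, and for $d\geq 4$ it is only the image of $\pi_d^{\Abb^1}(\Abb^d\smallsetminus\{0\})\to\GW^d_{d+1}$. So you also need that the cokernel $\Qbf$ satisfies $\Qbf_{-(d-3)}=0$. This gives $\Hr^{d-1}(X,\Qbf(\Lc))=\Hr^d(X,\Qbf(\Lc))=0$, so the image has the same $\Hr^d$ as $\GW^d_{d+1}(\Lc)$; right exactness alone does not pass from a sheaf to a subsheaf.

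Second, and more seriously, your mechanism for killing this piece does not work. You reduce to $\Hr^d(X,\Kbf^\MW_n(\Lc))\cong\Hr^d(X,\Kbf^\Mr_n)$ and assert that this group is ``killed''. What your Gersten and real-locus arguments actually prove is $\Hr^d(X,\Kbf^\Mr_n)/m\cong\Hr^d(X,\Kbf^\Mr_n/m)=0$, that is, $\Hr^d(X,\Kbf^\Mr_n)$ is divisible. Divisible is not zero. For $n=d+1$ this group is essentially the weak Mennicke symbol group $\mathrm{WMS}_{d+1}(X)$, and under $(*)$ one gets its divisibility (the paper's Remark), not its vanishing. So knowing only that the quotient is ``of $\Kbf^\MW_{d+1}$-type'' gives you no reason for its top cohomology to vanish.

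\textbf{The missing idea} is that, in the contractions seen by the Rost--Schmid complex in degrees $d-1$ and $d$, the sheaf $\GW^d_{d+1}(\Lc)$ is a mod-$2$ object. The paper shows $\Hr^d(X,\GW^d_{d+1}(\Lc))=0$ using Karoubi periodicity, $\GW^{d-1}_{d+1}(\Lc)\xrightarrow{f}\Kbf_{d+1}\xrightarrow{H}\GW^d_{d+1}(\Lc)\xrightarrow{\eta}\GW^{d-1}_d(\Lc)\xrightarrow{f}\Kbf_d$. It writes $\GW^d_{d+1}(\Lc)$ as an extension of $\mathbf{B}(\Lc)=\ker f$ by $\mathbf{A}(\Lc)=\coker f$.
\begin{itemize}
\item $\mathbf{A}(\Lc)$ is a quotient of $\coker(f\circ H)$, and $f\circ H$ is multiplication by $2$ on all $m$-fold contractions with $m\geq d-1$. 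Hence $\Hr^d(X,\mathbf{A}(\Lc))$ is a quotient of $\Hr^d(X,\Kbf^\Mr_{d+1}/2)$.
\item The hyperbolic map identifies $\Kbf_1/2$ and $\Kbf_0/2$ with the $(d-1)$- and $d$-fold contractions of $\mathbf{B}(\Lc)$. Hence $\Hr^d(X,\mathbf{B}(\Lc))\cong\Hr^d(X,\Kbf^\Mr_d/2)$.
\end{itemize}
Both groups vanish under $(*)$; the second needs the $n=d$ case of Colliot-Th\'el\`ene--Scheiderer, $\Hr^d(X,\Kbf^\Mr_d/2)\cong\Hr^d(X(\Rb),\Zb/2)$. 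Without this step, or another argument that the top cohomology of $\GW^d_{d+1}(\Lc)$ is controlled by mod-$2$ Milnor $\Kr$-cohomology, your proof does not go through.
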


In this theorem, the subscript $\Nis$ refers to the Nisnevich topology on the category of smooth $\Rb$-schemes and $\pi_d^{\Abb^1}(\Abb^d\smallsetminus \{0\})$ is the $d$-th $\Abb^1$-homotopy sheaf of $\Abb^d\smallsetminus \{0\}$, which has a $\mathbb{G}_m$-module structure and may therefore be twisted by any $\mathbb{G}_m$-torsor. In fact, for every $n\geq 1$, there exists a classifying space $\BGL_n$ of torsors under the general linear group $\GL_n$ for the Nisnevich topology. By \cite{asokAffineRepresentabilityResults2017a}, given a smooth affine $\Rb$-scheme $X$, every rank $n$ bundle on $X$ is obtained by pullback along a map of pointed spaces $X_+\rightarrow\BGL_n$ of the universal $\GL_n$-bundle on $\BGL_n$ (here $X_+$ is $X$ with a disjoint added base point), and this map is well-determined up to $\Abb^1$-homotopy. This yields a bijection $\mathscr{V}_n(X)\simeq[X_+,\BGL_n]_{\Abb^1}$, where $[\text{--},\text{--}]_{\Abb^1}$ denotes the set of maps in the pointed $\Abb^1$-homotopy category. Moreover, there is an inclusion $\GL_n\rightarrow\GL_{n+1}$ of algebraic groups carrying $M$ to $\operatorname{diag}(M,1)$; it determines a morphism $p:\BGL_n\rightarrow\BGL_{n+1}$ of classifying spaces and, modulo the previous bijection, the map $s_n$ is identified with the map $p_*:[X_+,\BGL_n]_{\Abb^1}\rightarrow[X_+,\BGL_{n+1}]_{\Abb^1}$. Thus, proving that $s_n$ is injective amounts to proving that every map $X_+\rightarrow\BGL_{n+1}$ lifts uniquely along $p$ up to $\Abb^1$-homotopy. Obstruction theory provides us with a systematic method to approach this problem by producing obstructions to lifting, and a count of these lifts, both governed by cohomology groups of the form $\Hr_\Nis^i(X,\pi_j^{\Abb^1}(F)(\xi))$ where $F$ is the homotopy fibre of $p$ and $\xi$ is a torsor under $\pi_1^{\Abb^1}(\BGL_{n+1})$. Since $F\simeq\Abb^{n+1}\smallsetminus \{0\}$ in the motivic homotopy category and $\pi_1^{\Abb^1}(\BGL_{n+1})\simeq\mathbb{G}_m$ via the determinant map, we are led precisely to studying cohomology groups such as the one appearing in the above theorem.

We also consider singular varieties in this article, for which the now established motivic techniques mentioned above do not apply. However, in the stably free case it is possible to adapt the ideas developed in \cite{faselStablyFreeModules2012} to obtain the following theorem.

\begin{thmintro}
Let $X = \Spec (A)$ be a normal affine $\mathbb{R}$-variety of dimension $d$. Assume that either $X(\mathbb{R}) = \emptyset$, or the intersection of all real maximal ideals of $A$ has height at least $2$. If $d = 3$, suppose in addition that $X$ is smooth. Then every stably free $A$-module of rank $d-1$ is free.
\end{thmintro}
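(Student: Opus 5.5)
We adapt the strategy of \cite{faselStablyFreeModules2012}. A stably free module of rank $d-1$ is, after adding free summands and using Bass/Suslin cancellation in ranks $\ge d$ (in rank $d$ via \cite{banerjeeZeroCyclesMennicke2025}, or its analogue when the real maximal ideals have small locus), of the form $P\oplus A\simeq A^{d}$. Hence $P\simeq A^{d}/vA$ for a unimodular row $v\in\Um_d(A)$, and $P$ is free if and only if $v$ is completable, which holds in particular if $v$ lies in the orbit of $e_1$ under $\Er_d(A)$. The goal is therefore to show that $\Um_d(A)/\Er_d(A)$ is trivial.

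\textbf{Step 1 (symbol map).} Since $d-1\ge 2$ and the stable range is $d+1$, van der Kallen's group structure on $\Um_d(A)/\Er_d(A)$ is available for rows of length $d$ over a ring of dimension $d$, as in \cite{faselStablyFreeModules2012}. We then use the generalized Vaserstein symbol, or the Mennicke–Newman type map, to compare this orbit set with a Chow–Witt type group or with $\CH_0$-type data. Concretely, for normal $A$ we expect, after the Suslin–Fasel argument, a surjection from the group of Euler-class type invariants in top degree $E^d(A)$ (or $\CHW^d$) onto $\Um_d(A)/\Er_d(A)$, whose image is controlled by elements $[v]$ arising from $v=(a_1,\dots,a_d)$ with $(a_1,\dots,a_{d-1})$ cutting out a smooth locus of points. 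When $d=3$ the smoothness hypothesis is needed for this step, exactly as in \cite{faselStablyFreeModules2012}.

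\textbf{Step 2 (divisibility).} We show that the relevant group of zero-cycles is divisible and that the obstruction class of $v$ is $(d-1)!$ times something. Concretely, Suslin's $n!$-theorem says that $(a_1,\dots,a_{d-1},a_d^{(d-1)!})$ is completable. Dividing the class of the row by $(d-1)!$ inside a divisible group, for which $(d-1)!$ is invertible in $\Rb$, yields completability, provided the group is divisible and without $(d-1)!$-torsion issues.

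\textbf{Step 3 (input from the real locus).} This is the main obstacle. Over $\Cb$ divisibility of $\CH_0$ of affine varieties comes from the algebraically closed residue fields. Over $\Rb$, complex closed points have residue field $\Cb$, so their classes, including the quadratic (Witt) parts, are divisible, and their $\Ibf$-cohomology contributions vanish. Real points, on the other hand, contribute $\Zb$-valued real topological invariants. Under the hypothesis $X(\Rb)=\emptyset$, every closed point is complex, and we can argue as in \cite{banerjeeZeroCyclesMennicke2025}. In the height $\ge 2$ case, we would use a moving lemma to move the zero-cycle, i.e. the ideal $(a_1,\dots,a_{d-1})$ by elementary transformations, away from the closed set $V(J)$, where $J$ is the intersection of the real maximal ideals. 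Since $J$ has height at least $2$, a general linear combination avoids $V(J)$, by prime avoidance applied to the finitely many minimal primes of $J$. The resulting cycle is then supported on complex points and the divisibility argument applies. The difficulty is making the moving compatible with the elementary-orbit class, and I would first try a Swan–Towber/Mennicke-style argument that adds multiples of $a_d$ to the other entries to achieve avoidance.
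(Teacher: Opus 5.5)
\textbf{There is a genuine gap.} The proposal never produces a \emph{divisible} group structure on the orbit set satisfying the Mennicke identity, and that is the substance of the proof.

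Your endgame is right. After $P\oplus A\simeq A^d$, it suffices to show $[v]=[(u_1^{(d-1)!},u_2,\dots,u_d)]$ in $\Um_d(A)/\Er_d(A)$ and apply Suslin's theorem. This, not triviality of $\Um_d(A)/\Er_d(A)$, is the correct target.

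Step 1, however, compares the orbit set with top-degree invariants (the Euler class group $\Er^d$, $\CHW^d$, zero-cycles). That is the wrong codimension: $d-1$ entries of a row cut out a curve, not points, and the group governing length-$d$ rows is $\Hr^{d-1}(X,\Kbf_d^{\Mr})$. More seriously, $A$ is only normal. So neither a cohomological description of the orbit set nor the divisibility results of Section 3 (which need smoothness and a hypothesis on the real locus of the variety where one computes) is available on $X$ itself.

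Step 3 does not repair this. Divisibility is a property of the ambient group, not of the support of one representative cycle. Your moving argument is left open exactly where it must respect the $\Er_d(A)$-orbit. Also, even when $X(\Rb)=\emptyset$, \cite{banerjeeZeroCyclesMennicke2025} concerns rank $d$ (rows of length $d+1$), so it does not give what you need for rows of length $d$.

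\textbf{The missing idea is a reduction to a smooth threefold with no real points.} Let $\mathcal{J}_1$ be the ideal of the singular locus (height $\geq 2$ by normality), $\mathcal{J}_2$ the intersection of the real maximal ideals (or $A$ if there are none), and $\mathcal{J}=\mathcal{J}_1\mathcal{J}_2$.
\begin{enumerate}
\item Since $\dim A/\mathcal{J}\leq d-2$, Bass' theorem gives $\Um_d(A/\mathcal{J})=e_1\Er_d(A/\mathcal{J})$. After elementary operations, $v_1,\dots,v_{d-1}\in\mathcal{J}$ and $v_d-1\in\mathcal{J}$. This is where the height hypothesis enters, not via prime avoidance.
\item For $d\geq 4$, Swan's Bertini theorem lets you modify $v$ further so that $B=A/\langle v_4,\dots,v_d\rangle$ is a smooth threefold. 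Moreover $\Spec(B)(\Rb)=\emptyset$, because $v_d\equiv 1$ modulo every real maximal ideal.
\item The map $\Um_3(B)/\Er_3(B)\to\Um_d(A)/\Er_d(A)$, $\bar w\mapsto(w,v_4,\dots,v_d)$, reduces the problem to $B$. For $d=3$, smoothness together with the height condition already forces $X(\Rb)=\emptyset$.
\item On $B$, Corollary~\ref{cor:mennicke_bijective} identifies $\Um_3(B)/\Er_3(B)$ with $\Hr^2(\Spec B,\Kbf_3^{\Mr})$ via the Mennicke symbol $\psi_2$. This group is divisible by Proposition~\ref{prop:divisibility_Hd-1Kd_real}. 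The Mennicke property of $\psi_2$ then gives $[w]^{(d-1)!}=[(w_1^{(d-1)!},w_2,w_3)]$.
\end{enumerate}
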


Our techniques do not allow us to prove that cancellation holds for \emph{any} projective module of rank $d-1$.

\subsection*{Contents}

The organisation of the paper is as follows. In Section \ref{section:preliminaries}, we introduce the tools used in the proof of the above cancellation theorem, as well as the hypotheses $(*)$ and $(**_\Lc)$ under which cancellation is shown to hold. 
Section \ref{section:cohomological_vanishing_statements} is technical: we collect useful cohomological vanishing statements and divisibility results for cohomology groups that are essential to the arguments of \cite{faselSuslinsCancellationConjecture2025}. We then prove the main results on cancellation of vector bundles of corank $\leq 1$ in Section \ref{section:cancellation}. In the final section, we compare Chow--Witt groups and Euler class groups of smooth real affine schemes under the hypotheses $(*)$ and $(**_\Lc)$.

\subsection*{Acknowledgements} The third author was supported by ANR project CYCLADES, grant number ANR-23-CE40-0011, during his work on this project. A substantial part of this project was developed during the first author's two visits to the Institut Fourier, Université Grenoble Alpes, where many crucial discussions took place. The first visit was jointly supported by the Institut Fourier, Université Grenoble Alpes, and IMSc, while the second was supported by the Abel Visiting Scholar Program. This work was partially supported by a grant from the Niels Hendrik Abel Board. The first author sincerely acknowledge the Institut Fourier, Université Grenoble Alpes, IMSc, the Niels Hendrik Abel Board, and the IMU Commission for Developing Countries (CDC) for their generous support.

\section{Preliminaries}\label{section:preliminaries}

\subsection{The relevant hypotheses}

We start with introducing for later use three hypotheses on smooth algebraic varieties over $\Rb$. Let $X$ be such a variety and denote its dimension by $d$. We endow the real locus $X(\Rb)$ of $X$ with the Euclidean topology. The hypotheses $(*)$, $(**_\Lc)$ and $(**')$ then read as follows.
\begin{itemize}
    \item[$(*)$] The group $\Hr^d(X(\Rb),\Zb/2)$ vanishes.
    \item[$(**_\Lc)$] The groups $\Hr^d(X(\Rb),\Zb/2)$ and $\Hr^{d-1}(X(\Rb),\Zb(\Lc(\Rb)))$ vanish.
    \item[$(**')$] The groups $\Hr^d(X(\Rb),\Zb/2)$ and $\Hr^{d-1}(X(\Rb),\Zb/2)$ vanish.
\end{itemize}
In $(**_\Lc)$, the additional datum $\Lc$ is a line bundle on $X$ and the sheaf $\Zb(\Lc(\Rb))$ is the twist of the constant sheaf $\Zb$ by the $\Zb/2$-torsor underlying the real topological line bundle $\Lc(\Rb)$ on $X(\Rb)$ induced by $\Lc$ (see, \emph{e.g.}, \cite[Example 2.3]{lerbetImageHigherSignature2026} for more details). Note that the satisfaction of $(**_\Lc)$ only depends on the class of $\Lc$ in $\Pic(X)/2$. If $\Lc$ is a square, then it is omitted in the notation in the above hypotheses so we write $(**)$ for the assumption that the groups $\Hr^d(X(\Rb),\Zb/2)$ and $\Hr^{d-1}(X(\Rb),\Zb)$ are trivial.

\begin{rem}\label{rem:relevance_cohomological_assumptions}
These hypotheses appear naturally in the context of this article. Indeed $\Hr^d(X(\Rb),\Zb/2)$ is the $\Zb/2$-vector space generated by the compact connected components of $X(\Rb)$: in particular, if $(*)$ holds, then the group $\Hr^d(X(\Rb),\Zb(L))$ also vanishes for every line bundle $L$ on $X(\Rb)$ since it is Poincaré dual to a $0$-th compactly supported homology group of $X(\Rb)$ (see \cite[Corollary 1.2.2]{faselVasersteinSymbolReal2018}, for example). The relevance of the emptiness of the set of compact connected components of $X(\Rb)$ in cancellation questions is moreover made clear by \cite[Theorems 4.11 and 4.15]{dasOrbitSpacesUnimodular2018}. If $X$ is further affine, then for every line bundle $\Lc$ on $X$ with associated real topological line bundle $L$, then by \cite[Corollary 8.11]{jacobsonRealCohomologyPowers2017} and \cite[Theorem 3.5]{lerbetImageHigherSignature2026}, for every $n\geq d$, there is an isomorphism $\Hr^d(X,\Ibf^n(\Lc))\cong\Hr^d(X(\Rb),\Zb(L))$ where $\Ibf^n$ is the $n$-th power of the fundamental ideal of the Witt ring of symmetric bilinear forms and $\Ibf^n(\Lc)$ is its twist by $\Lc$. This sheaf controls the difference between Milnor--Witt and Milnor $\Kr$-theory so its appearance in motivic obstruction theory is natural. Similarly, if $n\geq d$, then by \cite[Corollary 8.11]{jacobsonRealCohomologyPowers2017} (for $n>d$) and \cite{asokSplittingVectorBundles2025} (for $n=d$), the group $\Hr^{d-1}(X,\Ibf^n(\Lc))$ is a quotient of $\Hr^d(X(\Rb),\Zb/2)\oplus\Hr^{d-1}(X(\Rb),\Zb(L))$. It follows that if $(**_\Lc)$ holds, then $\Hr^{d-1}(X,\Ibf^n(\Lc))=0$ for every $n\geq d$.
\end{rem}

In essence, the above hypotheses express that $X(\Rb)$ is ``cohomologically small''. As was observed in \cite{asokSplittingVectorBundles2025} and in \cite{lerbetCohomologicalClassificationVector2026}, for varieties $X$ with properties of this type, it is reasonable to expect that the situation becomes very similar to the known theorems over an algebraically closed base field, \emph{e.g.}, that cancellation holds for vector bundles of corank $\leq 1$.

\begin{rem}\label{rem:double_star_weaker}
Let $\Lc$ be any line bundle on $X$. Then $(**_\Lc)$ implies $(**')$. Indeed, assume that $(**_\Lc)$ holds and set $L=\Lc(\Rb)$. Since $\Hr^d(X(\Rb),\Zb/2)=0$ is the $\Zb/2$-vector space generated by the compact connected components of $X(\Rb)$, there is no such component; as noted in Remark \ref{rem:relevance_cohomological_assumptions}, this implies that $\Hr^d(X(\Rb),\Zb(L))=0$. Then the cohomology long exact sequence associated with the epimorphism $\Zb(L)\to\Zb/2$ of sheaves on $X(\Rb)$ induces an exact sequence \[\Hr^{d-1}(X(\Rb),\Zb(L))\to\Hr^{d-1}(X(\Rb),\Zb/2)\to\Hr^{d}(X(\Rb),\Zb(L))=0\] so the middle group vanishes if $\Hr^{d-1}(X(\Rb),\Zb(L))=0$, which follows from the hypothesis $(**_\Lc)$.
\end{rem}

\begin{ex}\label{exe:different_satisfaction_double_star}
The following example shows that the satisfaction of $(**_\Lc)$ for some line bundle $\Lc$ on a smooth real affine variety $X$ does not imply its satisfaction for another line bundle. Let $C\subseteq\mathbb{P}_\Rb^3$ be a smooth intersection of quadrics $Q$ and $Q'$ in $\mathbb{P}_\Rb^3$ with the following properties: the quadric $Q$ is isomorphic to $\{x_0^2+x_1^2+x_2^2=x_3^2\}$ (in particular, its real locus is diffeomorphic to the $2$-dimensional sphere $\Sr^2$) and $C(\Rb)$ is connected; set $Y=\mathbb{P}_\Rb^3\setminus C$. Since $C$ is the intersection of hypersurfaces of even degree, the orientation sheaf $\omega_{C/\mathbb{P}_\Rb^3}$ is trivial. Therefore if $\Lc\in\Pic(\mathbb{P}_\Rb^3)/2$, then setting $L=\Lc(\Rb)$, there is a localization exact sequence 
\[\begin{tikzcd}
	{\Hr^0(C(\Rb),\Zb(L_{|C}))} & {\Hr^2(\mathbb{P}^3(\Rb),\Zb(L))} & {\Hr^2(Y(\Rb),\Zb(L_{|Y}))} \\
	{\Hr^1(C(\Rb),\Zb(L_{|C}))} & {\Hr^3(\mathbb{P}^3(\Rb),\Zb(L))} & {\Hr^3(Y(\Rb),\Zb(L_{|Y}))}
	\arrow[from=1-1, to=1-2]
	\arrow[from=1-2, to=1-3]
	\arrow[from=1-3, to=2-1]
	\arrow[from=2-1, to=2-2]
	\arrow[from=2-2, to=2-3]
\end{tikzcd}\]
which also induces an isomorphism \[\Hr^0(\mathbb{P}^3(\Rb),\Zb(L))\to\Hr^0(Y(\Rb),\Zb(L_{|Y}))\] (as $C$ has codimension $2$ in $\mathbb{P}_\Rb^3$). In particular, the space $Y(\Rb)$ is connected: since $\mathbb{P}^3(\Rb)=Y(\Rb)\sqcup C(\Rb)$ where $Y(\Rb)$ and $C(\Rb)$ are both nonempty, the space $\mathbb{P}^3(\Rb)$ is connected and $C(\Rb)$ is compact, hence closed, this implies that $Y(\Rb)$ is not compact, hence does not have a compact connected component. In particular, the last group $\Hr^3(Y(\Rb),\Zb(L_{|Y}))$ in the above exact sequence is trivial. The morphism $\Hr^1(C(\Rb),\Zb(L_{|C}))\to\Hr^3(\mathbb{P}^3(\Rb),\Zb(L))$ is then an epimorphism of groups both isomorphic to $\Zb$ (if $\Lc$ is a square) or to $\Zb/2$ (if $\Lc=\Osc(1)$ modulo $2\Pic(\mathbb{P}_\Rb^3)$) and is therefore an isomorphism. In particular, we obtain an exact sequence \[\Hr^0(C(\Rb),\Zb(L_{|C}))\to\Hr^2(\mathbb{P}^3(\Rb),\Zb(L))\to\Hr^2(Y(\Rb),\Zb(L_{|Y}))\to 0\] of abelian groups.

If $\Lc=\Osc(1)$, then $\Hr^2(\mathbb{P}^3(\Rb),\Zb(L))=0$ (see \cite[Lemma 1]{cadekCohomologyBON1999}) and thus $\Hr^2(Y(\Rb),\Zb(L_{|Y}))=0$ according to this exact sequence. Now assume that $\Lc$ is a square. We claim that $\Hr^2(Y(\Rb),\Zb)=\Zb/2$. To prove this, it is sufficient (and actually equivalent) to prove that the morphism $\Hr^0(C(\Rb),\Zb)\to\Hr^2(\mathbb{P}^3(\Rb),\Zb)$ is the zero map. The map $\Hr^2(\mathbb{P}^3(\Rb),\Zb)\to\Hr^2(\mathbb{P}^3(\Rb),\Zb/2)$ is an isomorphism: indeed, as $\Hr^2(\mathbb{P}^3(\Rb),\Zb)=\Zb/2$ and $\Hr^3(\mathbb{P}^3(\Rb),\Zb)\simeq\Zb$ is torsion free, this easily follows from the Bockstein exact sequence associated with the epimorphism $\Zb\to\Zb/2$. Therefore to prove that the morphism $\Hr^0(C(\Rb),\Zb)\to\Hr^2(\mathbb{P}^3(\Rb),\Zb)$ is the zero map, it suffices to prove that the corresponding morphism $\Hr^0(C(\Rb),\Zb/2)\to\Hr^2(\mathbb{P}^3(\Rb),\Zb/2)$ with $\Zb/2$-coefficients is the zero map. This morphism factors as \[\Hr^0(C(\Rb),\Zb/2)\to\Hr^1(Q(\Rb),\Zb/2)\to\Hr^2(\mathbb{P}^3(\Rb),\Zb/2)\] by functoriality and the middle group is trivial since $Q(\Rb)\simeq\Sr^2$. This establishes the claim.

Now the variety $Y$ is not affine. However, there exists an affine open subscheme $X$ of $Y$ such that $X(\Rb)=Y(\Rb)$ (see, \emph{e.g.}, \cite[Lemma 4.1.7]{asokSplittingVectorBundles2025}). Then $\Hr^2(X(\Rb),\Zb(\Osc(1)_{|X}))=0$ but $\Hr^2(X(\Rb),\Zb)=\Zb/2$. In fact, if $d$ is any odd integer and $C$ is the smooth intersection of $d-1$ quadrics in $\mathbb{P}_\Rb^d$, one of which has real locus diffeomorphic to $\Sr^{d-1}$, and is such that $C(\Rb)$ is connected, then its complement $Y$ will satisfy $\Hr^{d-1}(Y(\Rb),\Zb(\Osc(1)_{|Y}))=0$ and $\Hr^{d-1}(Y(\Rb),\Zb)=\Zb/2$; any affine open subscheme $X$ of $Y$ such that $X(\Rb)=Y(\Rb)$ will then have the same property.
\end{ex}

\subsection{Motivic homotopy theory}

Let $k$ be a perfect field in which $2$ is invertible (eventually, the field $k$ will be real closed). We denote by $\mathsf{P}(\Sm_k)$ the $\infty$-category of \emph{pointed} presheaves of spaces on the category $\Sm_k$ of smooth separated finite-type $k$-schemes (henceforth, all (maps of) presheaves and spaces are assumed to be pointed so we drop this epithet). As examples, each presheaf of sets on $\Sm_k$ (resp. each space) determines a presheaf of spaces on $\Sm_k$; if $X\in\Sm_k$, we may therefore consider the disjoint union $X_+$ of $X$ (which represents a presheaf $U\mapsto\Hom_{\Sm_k}(U,X)$ of sets on $\Sm_k$) and an added base point $*$, yielding an object of $\mathsf{P}(\Sm_k)$.

We denote by $\Shv_\Nis(\Sm_k)$ the full subcategory of $\mathsf{P}(\Sm_k)$ spanned by those presheaves having Nisnevich descent, namely by Nisnevich sheaves of spaces. For example, Nisnevich sheaves of sets (such as $U_+$ for every $U\in\Sm_k$, since the Nisnevich topology is subcanonical) have Nisnevich descent. The inclusion of $\Shv_\Nis(\Sm_k)$ into $\mathsf{P}(\Sm_k)$ has a left adjoint $L_\Nis$, namely Nisnevich sheafification, which commutes with finite limits and in fact exhibits $L_\Nis$ as an $\infty$-topos. If $X$ and $Y$ are presheaves of spaces, we denote by $[X,Y]_\Nis$ the set of morphisms from $L_\Nis X$ to $L_\Nis Y$ in the homotopy category of $\Shv_\Nis(\Sm_k)$. We denote by $\Omega X=*\times_X *$ the loop space of $X$, and by $\Omega^nX$ the $n$-th iteration of this construction. We also denote by $\Sigma X=*\sqcup_X *$ the suspension of $X$ and by $\Sigma^n X$ the $n$-th iteration of this construction.

A \emph{motivic space} on $\Sm_k$ is a Nisnevich sheaf of spaces $X$ which is $\Abb^1$-invariant: for every $U\in\Sm_k$, the projection map $U\times\Abb^1\rightarrow U$ induces an equivalence $X(U)\simeq X(U\times\Abb^1)$. The inclusion $\Spc_k\subseteq\mathsf{P}(\Sm_k)$ of the full subcategory spanned by motivic spaces has a left adjoint $L_{\Abb^1}$ which commutes with products. We denote by $\Hr(k)$ the homotopy category of $\Spc_k$ and, given presheaves of spaces $X$ and $Y$, we denote by $[X,Y]_{\Abb^1}$ the set of maps from $L_{\Abb^1}X$ to $L_{\Abb^1}Y$ in $\Hr(k)$.

\begin{ex}
If $X$ is a motivic space, so is $\Omega X$. In other words, taking loop spaces preserves $\Abb^1$-invariance. This follows from the fact that we have an equivalence $\Map(Y,\Omega X)\cong\Omega\Map(Y,X)$ of $\infty$-groupoids natural in $Y$.
\end{ex}

Given a Nisnevich sheaf $X$ of spaces on $\Sm_k$, it has a well-defined $n$-th homotopy sheaf $\pi_n(X)$ for every $n\geq 0$ as an object in the $\infty$-topos $\Shv_\Nis(\Sm_k)$ \cite[Definition 6.5.1.1]{lurieHigherToposTheory2009}. If $X$ is any presheaf of spaces on $\Sm_k$, we then denote by $\pi_n^{\Abb^1}(X)$ the $n$-th homotopy sheaf of the Nisnevich sheaf $L_{\Abb^1}X$, so that $\pi_n^{\Abb^1}(X)$ is the Nisnevich sheaf on $\Sm_k$ associated with the presheaf $U\mapsto[\Sigma^n U_+,X]_{\Abb^1}$.

\subsubsection*{Group actions}

Let $\Gbf$ be a Nisnevich sheaf of groups acting on $V\in\Shv_\Nis(\Sm_k)$ as in \cite[Definition 3.1]{nikolausPrincipalinftybundlesGeneral2015}. We can then form the homotopy quotient $V\sslash\Gbf$ of $V$ by the action of $\Gbf$. The \emph{classifying space} $\Br\Gbf\in\Shv_\Nis(\Sm_k)$ of $\Gbf$ is defined as the homotopy quotient $\Br\Gbf=*\sslash\Gbf$. The terminal map $V\to *$ induces a map $V\sslash\Gbf\to *\sslash\Gbf=\Br\Gbf$ and the canonical map $V\to V\sslash\Gbf$ fits in a fibre sequence \[V\to V\sslash\Gbf\to \Br\Gbf.\] Every $\Gbf$-equivariant map $f:V'\to V$ induces a map $V'\sslash\Gbf\to V\sslash\Gbf$ of homotopy quotients which is an equivalence if $f$ is an equivalence of the underlying Nisnevich sheaves of spaces; this can be seen by a diagram chase in the ladder of long exact sequences of homotopy sheaves induced by the previous fibre sequence. In particular, a $\Gbf$-equivariant map $*\to V$ induces a map $*\sslash\Gbf=\Br\Gbf\to V\sslash\Gbf$ thanks to which one may regard $V\sslash\Gbf$ as a pointed object of $\Shv_\Nis(\Sm_k)/\Br\Gbf$.

Let $\Gbf$ be a Nisnevich sheaf of groups on $\Sm_k$ and let $V$ and $P$ be objects of $\Shv_\Nis(\Sm_k)$ with an action of $\Gbf$. Then there is an induced diagonal action on $P\times V$ and we set $P\times_\Gbf V=(P\times V)\sslash\Gbf$. Suppose now that $P\to X$ is a principal $\Gbf$-bundle classified by a map $\xi:X\to\Br\Gbf$, so that the map $P\to X$ identifies $X$ with the homotopy quotient $P\sslash\Gbf$ (the induced map $X\to *\sslash\Gbf=\Br\Gbf$ being precisely $\xi$). The projection $P\times V\to P$ is $\Gbf$-equivariant by definition of the diagonal action on $P\times V$ so there is an induced map $(P\times V)\sslash\Gbf\to P\sslash\Gbf=X$ on homotopy quotients thanks to which we can regard $(P\times V)\sslash\Gbf=P\times_\Gbf V$ as a space over $X$. Then according to \cite[Proposition 4.17]{nikolausPrincipalinftybundlesGeneral2015}, there is a natural equivalence 
\begin{equation}\label{eq:equivalence_lifts}
\Map_{/X}(X,P\times_\Gbf V)\simeq\Map_{/\Br\Gbf}(X,V\sslash\Gbf)  
\end{equation}
of $\infty$-groupoids (where $\Map_{/Y}$ stands for the mapping space in the slice category $\Shv_\Nis(\Sm_k)_{/Y}$). In particular, up to homotopy, a section of the map $P\times_\Gbf V=(P\times V)\sslash\Gbf\rightarrow P\sslash\Gbf=X$ is the same as a lift of the map $X\rightarrow\Br\Gbf$ along the map $V\sslash\Gbf\rightarrow\Br\Gbf$.

\subsubsection*{Strongly and strictly $\Abb^1$-invariant sheaves}

Let $X$ be a Noetherian scheme and let $\Gbf$ be a Nisnevich sheaf of groups on the category $\Sm_X$ of smooth separated $X$-schemes. We say that $\Gbf$ is \emph{strongly $\Abb^1$-invariant} if for every $j\in\{0,1\}$ and every $U\in\Sm_X$, the map $p_U^*:\Hr_\Nis^j(U,\Gbf)\to\Hr_\Nis^j(\Abb_U^1,\Gbf)$ induced by the projection $p_U:\Abb_U^1\to U$ is an isomorphism. In case $\Gbf$ is a sheaf of \emph{abelian} groups, we say that $\Gbf$ is \emph{strictly} $\Abb^1$-invariant if the map $p_U^*:\Hr_\Nis^j(U,\Gbf)\to\Hr_\Nis^j(\Abb_U^1,\Gbf)$ is an isomorphism for every $j\geq 0$. We will essentially manipulate this notion when $X\in\Sm_k$. When $X$ is the spectrum of $k$, as we review below, this definition can be advantageously recast in terms of Eilenberg--Mac Lane spaces. We say that $X$ is of $\Abb^1$-cohomological dimension $\leq d$ if for every $j>d$ and every strictly $\Abb^1$-invariant sheaf $\Abf$ on $\Sm_X$, the group $\Hr^j(X,\Abf)$ vanishes.

Let $\Gbf$ be a Nisnevich sheaf of groups on $\Sm_k$. The fibre sequence \[\Gbf\rightarrow *\rightarrow\Br\Gbf\] then guarantees that $\Br\Gbf$ is an Eilenberg--Mac Lane of type $\Kr(\Gbf,1)$: the space $\Br\Gbf$ is connected and we have $\pi_1(\Br\Gbf)=\pi_0(\Gbf)=\Gbf$ and $\pi_j(\Br\Gbf)\simeq\pi_{j-1}(\Gbf)=0$ if $j>1$. Moreover, if $X\in\Shv_\Nis(\Sm_k)$, then the $\infty$-groupoid of $\Gbf$-torsors on $X$, namely of maps $P\rightarrow X$ together with an action of $\Gbf$ on $P$ such that $P\sslash\Gbf\simeq X$, is equivalent to the $\infty$-groupoid of maps $X\rightarrow\Br\Gbf$ (via the pullback of the torsor $*\rightarrow\Br\Gbf$). In particular, the set of isomorphism classes of Nisnevich-locally trivial $\Gbf$-torsors is in bijection with $[X,\Br\Gbf]_{\Nis}=\Hr^1(X,\Gbf)$. Since $\pi_i(\Br\Gbf)=\Hr^{1-i}(\text{--},\Gbf)$ as presheaves on $\Sm_k$, we then see that $\Gbf$ is strongly $\Abb^1$-invariant if, and only if, the space $\Br\Gbf$ is $\Abb^1$-invariant.

If moreover $\Abf$ is a sheaf of \emph{abelian} groups on $\Sm_k$, then for every $n\geq 0$, there is an \emph{Eilenberg--Mac Lane} space $\Kr(\Abf,n)\in\Shv_\Nis(\Sm_k)$ with the defining property that $\pi_n(\Kr(\Abf,n))\cong\Abf$ and $\pi_m(\Kr(\Abf,n))=*$ if $m\neq n$. This space has the very desirable property that it represents cohomology: there is an isomorphism $[U_+,\Kr(\Abf,n)]_{\Nis}\cong\Hr_\Nis^n(U,\Abf)$, natural in $U\in\Sm_k$. Therefore $\Abf$ is strictly $\Abb^1$-invariant in the prvevious sense if, and only if, the Eilenberg--Mac Lane space $\Kr(\Abf,n)$ is $\Abb^1$-invariant for every $n\geq 0$. If $X\in\Shv_\Nis(\Sm_k)$, we set $\Hr^n(X,\Abf)=[X_+,\Kr(\Abf,n)]_{\Nis}$ and $\widetilde{\Hr}^n(X,\Abf)=[X,\Kr(\Abf,n)]_{\Nis}$.

We then have the following deep theorem of Morel.

\begin{thm}[Morel, \protect{\cite{morelA1AlgebraicTopologyField2012}}]\label{theo:strictly_invariant}
Let $X$ be a presheaf of spaces. Then $\pi_1^{\Abb^1}(X)$ is strongly $\Abb^1$-invariant. Moreover, if $\Gbf$ is a strongly $\Abb^1$-invariant Nisnevich sheaf of abelian groups, then $\Gbf$ is strictly $\Abb^1$-invariant. Consequently, the sheaf $\pi_n^{\Abb^1}(X)=\pi_1^{\Abb^1}(\Omega^{n-1}X)$ is strictly $\Abb^1$-invariant for every $n\geq 2$.
\end{thm}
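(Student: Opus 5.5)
The last assertion is a formal consequence of the first two, and that is where I would start. For $n\geq 2$, set $Y=\Omega^{n-1}L_{\Abb^1}X$. By the Example above, $Y$ is a motivic space, so $L_{\Abb^1}Y\simeq Y$. Hence $\pi_1^{\Abb^1}(Y)=\pi_1(Y)\cong\pi_n(L_{\Abb^1}X)=\pi_n^{\Abb^1}(X)$, using the standard identification $\pi_1\Omega^{n-1}=\pi_n$ in the $\infty$-topos $\Shv_\Nis(\Sm_k)$. Since $n\geq 2$, $Y$ is at least a loop space of a loop space on the relevant component, so the Eckmann--Hilton argument makes $\pi_1(Y)$ a sheaf of abelian groups. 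The first assertion applied to $Y$ shows that $\pi_1^{\Abb^1}(Y)$ is strongly $\Abb^1$-invariant. The second assertion then upgrades this to strict $\Abb^1$-invariance.

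For the first two assertions I would follow Morel's strategy in \cite{morelA1AlgebraicTopologyField2012}, relying crucially on $k$ being perfect.

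\emph{Step 1.} Prove that a strongly $\Abb^1$-invariant sheaf $\Gbf$ is unramified. That is, $\Gbf(U)\to\Gbf(k(U))$ is injective and $\Gbf(U)$ is the intersection of the $\Gbf(\Osc_{U,x})$ over codimension-one points $x$. This uses Gabber's presentation lemma to reduce local questions to the affine line over a field.

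\emph{Step 2.} Construct the contractions $\Gbf_{-1}$, with residue maps and an action of $\Gbf_m$. Use them to build a Gersten/Rost--Schmid type complex
\[\Gbf(k(U))\to\bigoplus_{x\in U^{(1)}}\Gbf_{-1}(k(x))(\omega_x)\to\cdots,\]
and show that it is a flasque resolution of $\Gbf$ in the abelian case. Because this complex is built from field-level data that behaves well under $\Abb^1$ (homotopy invariance on $\Abb^1_F$ for fields $F$, again via Gabber's lemma), its cohomology is $\Abb^1$-invariant in all degrees. This yields ``strongly implies strict''.

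\emph{Step 3.} For $\pi_1^{\Abb^1}(X)$, use the unstable $\Abb^1$-connectivity theorem to reduce to $X$ being $\Abb^1$-connected. Then realize $\pi_1^{\Abb^1}(X)$ via the $\Abb^1$-localization of a simplicial model, and show that the $1$-truncation of $L_{\Abb^1}X$ is $\Abb^1$-local. Morel does this by constructing the universal $\Abb^1$-covering and verifying that $\Br\pi_1$ is $\Abb^1$-invariant, which is exactly strong invariance.

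I expect Step 2 to be the main obstacle: the construction of the explicit Gersten-type resolution and the verification of its $\Abb^1$-invariance, together with the non-abelian analogue needed in Step 3. Both rest on a delicate use of Gabber's presentation lemma and the theory of contractions, which is where the perfectness of $k$ enters. In the paper itself one simply quotes Morel's results here rather than reproducing these arguments.
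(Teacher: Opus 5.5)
Your proposal agrees with the paper, which gives no proof of this theorem and simply quotes Morel. Your derivation of the last assertion from the first two is correct: you apply $\Omega^{n-1}$ after $L_{\Abb^1}$, use the Example that loop spaces of motivic spaces are motivic, and use Eckmann--Hilton for commutativity; this is exactly the ``consequently'' in the statement. One caveat on your Step 3 sketch: in Morel's book the universal $\Abb^1$-covering is obtained \emph{from} the strong $\Abb^1$-invariance of $\pi_1^{\Abb^1}$, and ``verifying that $\Br\pi_1$ is $\Abb^1$-invariant'' just restates the claim, so that step is circular as a proof --- though since you ultimately defer to Morel's results, as the paper does, the proposal stands.
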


\begin{ex}
The sheaf $\mathbb{G}_m$ is strictly $\Abb^1$-invariant. Indeed since $\mathbb{G}_m$ is abelian, by Morel's theorem cited above, it suffices to check that it is strongly $\Abb^1$-invariant. The claim then follows from the fact that $A[t]^\times=A^\times$ if $A$ is a domain (for example, the ring of global sections of a connected smooth affine $k$-scheme), and from the $\Abb^1$-invariance of the Picard group on regular schemes.
\end{ex}

The full subcategory $\Ab^{\Abb^1}(k)$ of the category $\Ab(k)$ of Nisnevich sheaves of abelian groups spanned by strictly $\Abb^1$-invariant sheaves is abelian and the fully faithful embedding $\Ab^{\Abb^1}(k)\hookrightarrow\Ab(k)$ is exact \cite[Corollary 6.24]{morelA1AlgebraicTopologyField2012}.

\subsubsection*{The twisted theory}

Let $\Gbf$ be a sheaf of groups on $\Sm_k$, let $\Abf$ be an abelian sheaf on $\Sm_k$ endowed with an additive action of $\Gbf$, and let $n$ be a non-negative integer. Then by functoriality of Eilenberg--Mac Lane spaces, the sheaf $\Gbf$ acts on $V=\Kr(\Abf,n)$, yielding a homotopy quotient $V\sslash\Gbf$: we denote it by $\Kr^\Gbf(\Abf,n)$. Note that if $\Gbf$ and $\Abf$ are strongly $\Abb^1$-invariant, then $\Kr^\Gbf(\Abf,n)$ sits in a fibre sequence $\Kr(\Abf,n)\to\Kr^\Gbf(\Abf,n)\to\Br\Gbf$ where $\Kr(\Abf,n)$ and $\Br\Gbf$ are $\Abb^1$-invariant and $\Br\Gbf$ is connected so $\Kr^\Gbf(\Abf,n)$ is $\Abb^1$-invariant by \cite[Lemma 6.51]{morelA1AlgebraicTopologyField2012} (see \cite[Lemma 2.2.10]{asokSimplicialSuspensionSequence2017} for a more detailed presentation of Morel's argument). Let $X$ be a Nisnevich sheaf of spaces with a $\Gbf$-torsor $P\to X$ represented by a map $\xi:X\to\Br\Gbf$. Then by \cite[Lemma B.15]{morelA1AlgebraicTopologyField2012}, we have a natural identification $P\times_G\Kr(\Abf,n)\simeq\Kr_X(\Abf(\xi),n)$ in the $\infty$-topos $\Shv_\Nis(\Sm_k)/X$ of sheaves over $X$, where $\Abf(\xi)=P\times_G\Abf$ is the twist of $\Abf$ by $\xi$ in the usual sense. Thus the equivalence  \eqref{eq:equivalence_lifts} specializes to an identification \[\Map_{/X}(X,\Kr_X(\Abf(\xi),n))\simeq\Map_{/\Br\Gbf}(X,\Kr^\Gbf(\Abf,n))\] of pointed $\infty$-groupoids. At the level of $\pi_0$, this reads as an isomorphism 
\begin{equation}\label{eq:cohomology_lift_through_bg}
\widetilde{\Hr}^n(X,\Abf(\xi))\cong[X,\Kr^\Gbf(\Abf,n))]_{\Nis/\Br\Gbf}
\end{equation}
of abelian groups. The element $0$ of the left-hand side corresponds to the homotopy class of the morphism $X\xrightarrow{\xi}\Br\Gbf\to\Kr^\Gbf(\Abf,n)$ in the right-hand side under this identification.

\begin{rem}\label{rem:equivariant_loop_space}
The functor $Y\mapsto Y\sslash\Gbf$ commutes with pullbacks. Indeed, the forgetful functor from $\Gbf$-actions to $\Shv_\Nis(\Sm_k)$ commutes with pullbacks; the claim then follows from the universality of colimits in the $\infty$-topos $\Shv_\Nis(\Sm_k)$ (see also \cite[proof of Lemma 4.5]{nikolausPrincipalinftybundlesGeneral2015}). For example, given an abelian sheaf $\Abf$  with additive $\Gbf$-action and $n\geq 1$, we have an equivalence $\Kr(\Abf,n-1)\to\Omega\Kr(\Abf,n)$ which is $\Gbf$-equivariant so that we have an equivalence $\Omega\Kr(\Abf,n)\sslash\Gbf\simeq\Kr(\Abf,n-1)\sslash\Gbf$. Together with the commutation of the homotopy quotient functor with pullback, this yields equivalences \[\Br\Gbf\times_{\Kr^\Gbf(\Abf,n)}\Br\Gbf\simeq(*\times_{\Kr(\Abf,n)}*)\sslash\Gbf\simeq(\Omega\Kr(\Abf,n))\sslash\Gbf\simeq\Kr(\Abf,n-1)\sslash\Gbf\simeq\Kr^\Gbf(\Abf,n-1)\] of sheaves over $\Br\Gbf$. 
\end{rem}

\begin{rem}
Let $\Gbf$ be a Nisnevich sheaf of groups acting additively on a strictly $\Abb^1$-invariant sheaf $\Abf$ and let $\xi:X\to\Br\Gbf$ be a Nisnevich-locally trivial $\Gbf$-torsor on $X\in\Sm_k$. The sheaf $\Abf(\xi)$ on $\Sm_X$ is then strictly $\Abb^1$-invariant. To see this, one takes a Nisnevich cover $U\to X$ such that $\xi_{|U}$ is trivial, so that $\Abf(\xi)_{|U}$ is isomorphic to $\Abf_{|U}$ as a sheaf on $\Sm_U$ and is therefore strictly $\Abb^1$-invariant, and uses the Čech-to-cohomology spectral sequence (which is natural in the given scheme). In particular, if $X$ has $\Abb^1$-cohomological dimension $\leq d$, then $\Hr_\Nis^j(X,\Abf(\xi))=0$ for every $j>d$.
\end{rem}

\subsubsection*{Fibre sequences}

The following situation occurs frequently when dealing with fibre sequences. Let \[G\to Y'\to Y\to Y''\] be a sequence of pointed sets, where $G$ is a group (pointed by its neutral element) acting on the set $Y'$ on the right and the map $G\to Y'$ is $G$-equivariant, hence of the form $g\mapsto y_0g$ for some $y_0\in Y'$. We say that this sequence is exact if the following assertions hold.
\begin{itemize}
    \item The fibre of $Y\to Y''$ over the base point of $Y''$ is the image of the map $Y'\to Y$.
    \item The composite $G\to Y'\to Y$ is the trivial map and elements $\alpha$ and $\beta$ of $Y'$ have the same image in $Y$ if, and only if, there exists $g\in G$ such that $\beta=\alpha g$.
\end{itemize}
These conditions imply that the fibre of $Y\to Y''$ over the base point of $Y''$ is identified by the map $Y'\to Y$ with the set $Y'/G$ of orbits of $Y'$ under the action of $G$. Note that this exact sequence provides no information on the other fibres of the map $Y\to Y''$. We now specialise the discussion to a situation in which more can be said about more general fibres of such maps.

Let $\pi:E\to B$ be a map of motivic spaces, let $X\in\Spc_k$ and let $\xi:X\to E$ be a map inducing a map $\pi_*\xi:X\to B$. 
Then by \cite[Lemma 5.5.5.12]{lurieHigherToposTheory2009}, there is a fibre sequence \[\Omega_{\pi_*\xi}\Map(X,B)\to(\Map_{/B}(X,E),\xi)\to(\Map(X,E),\xi)\to(\Map(X,B),\pi_*\xi)\] of spaces where $\Map_{/B}(X,E)$ is the mapping space from $X$ to $E$ in the $\infty$-category $\Spc_k/B$ (the space $X$ is seen as a space over $B$ through the map $\pi_*\xi$). This space parametrises lifts of $\pi_*\xi$ along $\pi$ and may therefore be pointed by $\xi$. There is an induced sequence \[\pi_0(\Omega_{\pi_*\xi}\Map(X,B))\to\pi_0(\Map_{/B}(X,E))\to\pi_0(\Map(X,E),\xi)\xrightarrow{\pi_*}\pi_0(\Map(X,B),\pi_*\xi)\] of sets which is exact in the sense described above. Suppose now that $B$ is an abelian group object in the $\Abb^1$-homotopy category $\Hr(k)$. Then $\Map(X,B)$ is an abelian group object in the homotopy category of spaces so translation along $\pi_*\xi$ induces a homotopy equivalence \[t_\xi:\Omega_*\Map(X,B)\xrightarrow{\simeq}\Omega_{\pi_*\xi}\Map(X,B)\] whose homotopy class is well-determined. In particular, the set $\pi_0(\Map_{/B}(X,E))$ inherits an action of $\pi_0(\Omega_*(X,B))$ in this manner and the fibre of $\pi_*:[X,E]_{\Nis}\to[X,B]_{\Nis}$ over the homotopy class of $\pi\circ\xi$ is identified with the quotient $\pi_0(\Map_{/B}(X,E))/\pi_0(\Omega_*\Map(X,B))$. It follows that if the composite map \[q_\xi:\pi_0(\Omega_*\Map(X,B))\xrightarrow{(t_\xi)_*}\pi_0(\Omega_{\pi_*\xi}\Map(X,B))\to\pi_0(\Map_{/B}(X,E))\] is surjective, then the fibre $\pi_*^{-1}(\pi_*\xi)$ is trivial. 

\subsubsection*{Fibre sequences in motivic homotopy theory}

An \emph{$\Abb^1$-fibre sequence} is a sequence \[F\rightarrow E\rightarrow B\] of Nisnevich sheaves of spaces such that the composite $F\rightarrow B$ is null-homotopic and such that the above sequence becomes a fibre sequence in $\Shv_\Nis(\Sm_k)$ after applying $L_{\Abb^1}$. If we have such a sequence and $F'$ is $\Abb^1$-equivalent to $F$ (namely if $L_{\Abb^1}F$ is equivalent to $L_{\Abb^1}F'$), we will sometimes abuse notation and language by saying that we have an $\Abb^1$-fibre sequence \[F'\rightarrow E\rightarrow B.\] The following example will play an important role in the sequel:

\begin{ex}\label{exe:fibre_sequence_BGLn}
We have $\pi_0^{\Abb^1}(\GL_{n+1})=\mathbb{G}_m$ \cite[Example 6.52, 2)]{morelA1AlgebraicTopologyField2012}, which is strictly $\Abb^1$-invariant as noted previously. Thus by \cite[Proposition 8.11]{morelA1AlgebraicTopologyField2012}, the inclusion $\GL_n\rightarrow\GL_{n+1}$ carrying $M$ to $\diag(M,1)$ determines an $\Abb^1$-fibre sequence \[\GL_{n+1}/\GL_n\rightarrow\BGL_n\rightarrow\BGL_{n+1}.\] On the other hand, the projection $\GL_{n+1}\rightarrow\Abb^{n+1}\smallsetminus \{0\}$ (the latter being pointed by $(0,\ldots,0,1)$) onto the last row induces an $\Abb^1$-equivalence $\GL_{n+1}/\GL_n\simeq\Abb^{n+1}\smallsetminus \{0\}$ (the fibres are Zariski-locally isomorphic to affine space so the map is an equivalence in $\Spc_k$). Thus we have an $\Abb^1$-fibre sequence 
\begin{equation}\label{eq:fibre_sequence_classifying_spaces}
\Abb^{n+1}\smallsetminus \{0\}\rightarrow\BGL_n\rightarrow\BGL_{n+1}
\end{equation}
for every $n$.
\end{ex}

\subsubsection*{Moore--Postnikov towers}

This formalism allows one to study lifting problems using cohomological tools, and is available in $\Abb^1$-homotopy theory as follows.

\begin{thm}\label{theo:moore--postnikov}
Let $k$ be a perfect field. Let $\pi:E\rightarrow B$ be a map of (pointed) motivic spaces, where $B$ is $\Abb^1$-connected; denote by $\Gbf$ the sheaf $\pi_1^{\Abb^1}(B)$. Let $F$ be the fibre of $p$ and suppose that $F$ is $\Abb^1$-simply connected (namely $\pi_i^{\Abb^1}(F)=*$ for every $i\leq 1$). Then there exist motivic spaces $E_n$ for $n\geq 0$, where $E_0$ is $\Abb^1$-equivalent to $B$, and for every $n$ a commutative diagram
\begin{center}
\begin{tikzcd}
                                       & E_{n+1} \arrow[rd,"p_{n+1}"] \arrow[d,"q_{n+1}"] & \\
E \arrow[r,swap,"i_n"] \arrow[ru,"i_{n+1}"] & E_n \arrow[r,swap,"p_n"]                          & B
\end{tikzcd}
\end{center}
in $\Spc_k$, such that the following assertions hold true.
\begin{itemize}
	\item For every $n\geq 0$, there is a commutative triangle
	\begin{center}
	\begin{tikzcd}
	                                    & E_n \arrow[rd,"p_n"] & \\
	E \arrow[rr,swap,"\pi"] \arrow[ru,"i_n"] & & B
	\end{tikzcd}
	\end{center}
	\item For every $n\geq 0$, the fibre of the map $i_n$ is $\Abb^1$-$n$-connected.
	\item For every $n\geq 0$, the fibe of the map $p_n$ is $\Abb^1$-$n$-truncated.
	\item We have $E\simeq\lim E_n$.
\end{itemize}
For every $n\geq 0$, the map $q_{n+1}:E_{n+1}\to E_n$ fits in a fibre sequence \[\Kr(\pi_{n+1}^{\Abb^1}(F),n+1)\to E_{n+1}\xrightarrow{q_{n+1}}E_n\] and in a pullback square
\begin{equation}\label{eq:twisted_principal_fibration}
\begin{tikzcd}
E_{n+1} \arrow[d,swap,"q_{n+1}"] \arrow[r] & \Br\Gbf \arrow[d] \\
E_n \arrow[r,swap,"\kappa_{n+1}"]           & \Kr^\Gbf(\pi_{n+1}^{\Abb^1}(F),n+2)
\end{tikzcd}
\end{equation}
of motivic spaces over $\Br\Gbf$, where the bottom horizontal map is called the \emph{$\kappa_{n+1}$-invariant} of $\pi$ and the right vertical map is the inclusion of the base point of $\Kr^\Gbf(\pi_{n+1}^{\Abb^1}(F),n+2)$. Such data is called a Moore--Postnikov tower of $\pi:E\to B$; it is natural in $\pi$.
\end{thm}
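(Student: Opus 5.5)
This is the twisted Moore--Postnikov tower construction of Morel (cf. \cite[Appendix B]{morelA1AlgebraicTopologyField2012} and the treatment in \cite{asokCohomologicalClassificationVector2014}). The plan is to build the tower in the $\infty$-topos $\Shv_\Nis(\Sm_k)$ first, and only afterwards check that every stage is $\Abb^1$-invariant.

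\textbf{Construction of the tower.} I would apply to $\pi$, viewed in the slice $\infty$-topos over $B$, the relative Postnikov (truncation) factorization of \cite[\S 5.2.8 and 6.5.1]{lurieHigherToposTheory2009}. Concretely, set $E_n=\tau_{\le n}^{/B}E$, the relative $n$-truncation of $E$ over $B$. Functoriality of truncation then gives the maps $i_n$, $p_n$, $q_{n+1}$, the commutative triangles, and naturality in $\pi$. The fibre of $p_n$ is the $n$-truncation of $F$, hence $n$-truncated. The fibre of $i_n$ is $n$-connected. Since $F$ is $1$-connected, $E_0\simeq E_1\simeq B$.

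\textbf{Convergence and $\Abb^1$-invariance.} The identification $E\simeq\lim E_n$ holds because Postnikov towers converge in $\Shv_\Nis(\Sm_k)$. This uses the finite Nisnevich cohomological dimension of smooth schemes, which makes the $\infty$-topos hypercomplete and Postnikov complete. To see that each $E_n$ is a motivic space, I would use the fibre sequences $\Kr(\pi_{n+1}^{\Abb^1}(F),n+1)\to E_{n+1}\to E_n$. The homotopy sheaves $\pi_{j}^{\Abb^1}(F)$ are strictly $\Abb^1$-invariant for $j\ge2$ by Theorem \ref{theo:strictly_invariant}, so the fibres are $\Abb^1$-invariant. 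Since $E_n$ is connected (it is equivalent to $B$ on $\pi_0$ and $\pi_1$), inductively applying \cite[Lemma 6.51]{morelA1AlgebraicTopologyField2012} shows that $E_{n+1}$ is $\Abb^1$-invariant.

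\textbf{The $\kappa$-invariants (main obstacle).} The delicate point is identifying $q_{n+1}$ as a twisted principal fibration, that is, producing the pullback square \eqref{eq:twisted_principal_fibration} over $\Br\Gbf$.
\begin{enumerate}
\item Let $\widetilde{B}\to B$ be the universal cover, the fibre of $B\to\Br\Gbf$, with $\Gbf=\pi_1^{\Abb^1}(B)$. Pull the tower back along it. Over $\widetilde B$, the map $E_{n+1}\times_B\widetilde B\to E_n\times_B\widetilde B$ is a map of simply connected objects whose fibre is an Eilenberg--Mac Lane object.
\item Since $\pi_1$ of the base now acts trivially, the standard theory of principal fibrations in an $\infty$-topos classifies this map by a map to $\Kr(\pi_{n+1}^{\Abb^1}(F),n+2)$, with $E_{n+1}$ recovered as the fibre.
\item This classification is $\Gbf$-equivariant, where $\Gbf$ acts on $\pi_{n+1}^{\Abb^1}(F)$ through the monodromy action of $\pi_1(B)$ on the fibre.
\item Passing to homotopy quotients by $\Gbf$ gives $\kappa_{n+1}:E_n\to\Kr^\Gbf(\pi_{n+1}^{\Abb^1}(F),n+2)$ over $\Br\Gbf$.
\item The pullback square follows because the functor $\sslash\Gbf$ commutes with pullbacks (Remark \ref{rem:equivariant_loop_space}) and because $(\widetilde{B}\times_B E_n)\sslash\Gbf\simeq E_n$.
\end{enumerate}
Alternatively, I would cite \cite[Theorem B.7]{morelA1AlgebraicTopologyField2012} for the equivariant classification of fibrations with Eilenberg--Mac Lane fibres, which is exactly this statement. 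Everything else is formal.
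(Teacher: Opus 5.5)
The paper gives no proof of this theorem. It is recalled as standard background (Morel's twisted Moore--Postnikov tower, as used by Asok--Fasel), so there is no argument to compare yours with. Your outline is the standard construction, and the untwisted part is sound:
\begin{itemize}
\item relative truncation in $\Shv_\Nis(\Sm_k)_{/B}$ commutes with base change, so the fibre of $p_n$ is $\tau_{\leq n}F$;
\item the stages are $\Abb^1$-invariant by your induction (alternatively, once the pullback square is available, because $\Br\Gbf$ and $\Kr^\Gbf(\Abf,n+2)$ are $\Abb^1$-invariant and limits of motivic spaces are motivic);
\item convergence reduces to $F\simeq\lim_n\tau_{\leq n}F$ after pulling back along the effective epimorphism $*\to B$, which is available because $B$ is connected.
\end{itemize}

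The one step you assert rather than argue is (3), and it is precisely the twisted content of the theorem. A classification of principal fibrations over $\widetilde{B}\times_B E_n$ up to equivalence only yields a $\Gbf$-\emph{invariant} class, not a $\Gbf$-\emph{equivariant} map. To get the latter you would need the classification as an equivalence of $\infty$-groupoids natural in the base, and then pass to homotopy fixed points.

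It is simpler to skip the universal cover and work directly in the slice $\infty$-topos over $\Br\Gbf$, which is equivalent to $\Gbf$-objects:
\begin{itemize}
\item The composite $E_n\to B\to\Br\Gbf$ has $1$-connected fibre $\widetilde{B}\times_B E_n$. Hence pullback along it is an equivalence on discrete objects.
\item Consequently, the band of the relative $(n+1)$-gerbe $q_{n+1}$ is the pullback of the $\Gbf$-module $\Abf=\pi_{n+1}^{\Abb^1}(F)$.
\item The classification of banded gerbes \cite[\S 7.2.2]{lurieHigherToposTheory2009} in $\Shv_\Nis(\Sm_k)_{/E_n}$ gives a class in $\Hr^{n+2}(E_n,\Abf(\xi))$, with $\xi$ that composite.
\item By the identification leading to \eqref{eq:cohomology_lift_through_bg}, this class is a map $\kappa_{n+1}:E_n\to\Kr^\Gbf(\Abf,n+2)$ over $\Br\Gbf$. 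Pulling back the base point along it recovers $E_{n+1}$.
\end{itemize}

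This formulation also supplies something you leave out: naturality of the $\kappa$-invariants in $\pi$. Functoriality of truncation only gives naturality of the $E_n$. The paper does use compatibility of $\kappa$-invariants, for instance in the ladder \eqref{eq:morphism_of_moore_postnikov_towers}.
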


\begin{rem}
Since $F$ is $\Abb^1$-$1$-connected by assumption, the sheaf $\Gbf$ is also the first $\Abb^1$-homotopy sheaf of $E$ and therefore acts on the $\Abb^1$-homotopy sheaves of $F$.
\end{rem}

\begin{rem}\label{rem:first_nontrivial_stage}
Let $m$ denote the smallest integer $n$ such that $\pi_n^{\Abb^1}(F)$ is non-trivial. Then for every $n<m-1$ the map $q_{n+1}:E_{n+1}\rightarrow E_n$ sits in a fibre sequence with fibre $\Kr(\pi_{n+1}^{\Abb^1}(F),n+1)\simeq *$ and is therefore an equivalence. Hence the map $E_n\rightarrow B$ is an equivalence for $n\leq m-1$. We call $E_{m}$ the first non-trivial stage of the Moore--Postnikov tower. It fits in a cartesian square
\begin{center}
\begin{tikzcd}
E_{m} \arrow[d] \arrow[r] & \Br\Gbf \arrow[d] \\
B \arrow[r]                 & \Kr^\Gbf(\pi_{m}^{\Abb^1}(F),m+1)
\end{tikzcd}
\end{center}
of motivic spaces.
\end{rem}

The Moore--Postnikov tower allows us to understand the lifting of maps along $\pi$ up to $\Abb^1$-homotopy, that is, the image of the map $\pi_*:[X,E]_{\Abb^1}\to[X,B]_{\Abb^1}$ for $X\in\Spc_k$, as follows. Let $f:X\to B$ be a map of spaces. The first non-trivial stage of the $\Abb^1$-Postnikov tower for $B$ determines a canonical morphism $B\to\Br\pi_1^{\Abb^1}(B)=\Br\Gbf$, so we obtain a map $\xi:X\to\Br\Gbf$ by composition with $f$. For ease of notation, let us set $\Abf=\pi_{n+1}^{\Abb^1}(F)$. Suppose that a lift $f_n:X\to E_n$ of $f$ along $p_{n}$ is constructed; note that the induced map $X\to E_n\to\Br\Gbf$ is then the original $\Gbf$-torsor $\xi$ on $X$. The problem of lifting $f_n$ to $E_{n+1}$ can be analysed using the pullback square (\ref{eq:twisted_principal_fibration}). In view of this square, such a lift exists if, and only if, the composite of $f_n$ with the $k$-invariant $\kappa_{n+1}:E_n\to\Kr^\Gbf(\Abf,n+2)$ factors through the inclusion $\Br\Gbf\to\Kr^\Gbf(\Abf,n+2)$ of the base point. The composite $u:X\to\Kr^\Gbf(\Abf,n+2)$ is a morphism over $\Br\Gbf$ so it can be viewed as a cohomology class $\omega$ in $\widetilde{\Hr}^{n+2}(X,\pi_{n+1}^{\Abb^1}(F)(\xi))$ by the identification (\ref{eq:cohomology_lift_through_bg}). Then $u$ lifts through the inclusion $\Br\Gbf\to\Kr^\Gbf(\pi_{n+1}^{\Abb^1}(F),n+2)$ if, and only if, the class $\omega$ vanishes.

Assume that the obstruction class $\omega$ vanishes. Then $\kappa_{n+1}\circ f_n$ factors through $\xi:X\to\Br\Gbf$ so $f_n$ lifts along $q_{n+1}$. The space $\Map_{/E_n}(X,E_{n+1})$ of lifts of $f_n$ along $q_{n+1}$ can then be identified with the space $\Map_{/X}(X,X\times_{E_n} E_{n+1})$ of sections of the map $X\times_{E_n}E_{n+1}\rightarrow X$. The fibre product square (\ref{eq:twisted_principal_fibration}) yields an equivalence $X\times_{E_n} E_{n+1}\simeq X\times_{\Kr^\Gbf(\Abf,n+2)}\Br\Gbf$ over $X$. We note that \[X\times_{\Kr^\Gbf(\Abf,n+2)}\Br\Gbf\simeq X\times_{\Br\Gbf}(\Br\Gbf\times_{\Kr^\Gbf(\Abf,n+2)}\Br\Gbf)\simeq X\times_{\Br\Gbf}\Kr^\Gbf(\Abf,n+1)\] according to Remark \ref{rem:equivariant_loop_space}. We then have an identification \[\Map_{/X}(X,X\times_{\Br\Gbf}\Kr^\Gbf(\Abf,n+1))\simeq\Map_{/\Br\Gbf}(X,\Kr^\Gbf(\Abf,n+1))\simeq\Map_{/X}(X,\Kr_{X}(\Abf(\xi),n+1))\] using again (\ref{eq:equivalence_lifts}). Now \cite[Lemma 5.5.5.12]{lurieHigherToposTheory2009} yields a fibre sequence \[\Map_{/E_n}(X,E_{n+1})\to\Map(X,E_{n+1})\to\Map(X,E_n)\] of mapping spaces, the homotopy fibre being taken over the vertex $f_n$ of $\Map(X,E_n)$. In view of the equivalence $\Map_{/E_n}(X,E_{n+1})\simeq\Map_{/X}(X,\Kr_X(\Abf(\xi),n+1))$ of spaces described previously, this fibre sequence induces a sequence of pointed sets \[\cdots\to\pi_0(\Omega_{f_n}\Map(X,E_n))\to\widetilde{\Hr}^{n+1}(X,\pi_{n+1}^{\Abb^1}(F)(\xi))\to[X,E_{n+1}]_{\Abb^1}\xrightarrow{(q_{n+1})_*}[X,E_n]_{\Abb^1}\] that is exact in the usual sense. This sequence identifies the set of $\Abb^1$-homotopy classes of maps $f_{n+1}:X\to E_{n+1}$ such that $q_{n+1}\circ f_{n+1}$ is homotopic to $f_n$ with the set of orbits of $\widetilde{\Hr}^{n+1}(X,\pi_{n+1}^{\Abb^1}(F)(\xi))$ under the action of the group $\pi_0(\Omega_{f_n}\Map(X,E_n))$.

We will use the following elementary consequence of the Moore--Postnikov formalism in the sequel.

\begin{lem}\label{lem:easy_moore--postnikov}
Let $X$ be a smooth $k$-scheme having $\Abb^1$-cohomological dimension $\leq d$ and let $\pi:E\to B$ be a morphism of motivic spaces with fibre $F$ such that $F$ is $\Abb^1$-$i$-connected with $i\geq 1$. Then the map $\pi_*:[X_+,E]_{\Abb^1}\to[X_+,B]_{\Abb^1}$ is surjective if $i\geq d-1$ and bijective if $i\geq d$.
\end{lem}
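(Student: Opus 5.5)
We climb the Moore--Postnikov tower of $\pi:E\to B$ from Theorem \ref{theo:moore--postnikov}. This needs $B$ to be $\Abb^1$-connected; if it is not, we work one $\Abb^1$-connected component at a time, or assume connectedness as in the intended application to $\BGL_n\to\BGL_{n+1}$. Since $F$ is $\Abb^1$-$i$-connected, $\pi_j^{\Abb^1}(F)=0$ for $j\leq i$. Remark \ref{rem:first_nontrivial_stage} then gives $E_n\simeq B$ for $n\leq i$, so the first possibly non-trivial stage is $E_{i+1}$. Write $\Abf_n=\pi_n^{\Abb^1}(F)$. For $n\geq 2$ it is strictly $\Abb^1$-invariant by Theorem \ref{theo:strictly_invariant}, and since $i\geq 1$ every relevant $n$ is $\geq 2$. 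By the remark on twisted sheaves, each twist $\Abf_n(\xi)$ by the torsor $\xi:X\to\Br\Gbf$ is again strictly $\Abb^1$-invariant on $\Sm_X$. Because $X$ has $\Abb^1$-cohomological dimension $\leq d$, we get $\Hr^j_\Nis(X,\Abf_n(\xi))=0$ for $j>d$. We use unreduced cohomology because the source is $X_+$; the reduced groups $\widetilde{\Hr}^j(X_+,-)$ coincide with $\Hr^j(X,-)$.

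\emph{Surjectivity.} Take $i\geq d-1$ and a map $f:X_+\to B$. Suppose a lift $f_n:X_+\to E_n$ has been built. By the discussion before the lemma, the obstruction to lifting it to $E_{n+1}$ lies in $\Hr^{n+2}(X,\Abf_{n+1}(\xi))$. This group is nonzero only if $\Abf_{n+1}\neq 0$, which forces $n+1\geq i+1\geq d$. Then $n+2\geq d+1>d$, so the group vanishes and every obstruction is zero. Induction therefore produces a compatible family of lifts $f_n$.

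To pass to $E\simeq\lim E_n$, we need to know the system $[X_+,E_n]$ has no $\lim^1$ problem. For $n\geq d+1$ the fibre of $q_{n+1}$ is $\Kr(\Abf_{n+1},n+1)$ with $n+1>d$. The exact sequence before the lemma then shows that $[X_+,E_{n+1}]\to[X_+,E_n]$ is bijective: the obstruction group $\Hr^{n+2}$ and the lift-counting group $\Hr^{n+1}$ both vanish. For the same reason the higher homotopy of the mapping spaces stabilises in the relevant degrees, so the Milnor sequence for $\Map(X_+,\lim E_n)$ has vanishing $\lim^1$ on $\pi_0$ and $\pi_1$. Hence $[X_+,E]\to[X_+,E_n]$ is bijective for $n$ large. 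This convergence argument is the step I expect to need the most care. I would handle it by noting that $\Map(X_+,E_{n+1})\to\Map(X_+,E_n)$ has fibre $\Map_{/X}(X,\Kr_X(\Abf_{n+1}(\xi),n+1))$, whose $\pi_k$ is $\Hr^{n+1-k}(X,\Abf_{n+1}(\xi))$ and vanishes for $n+1-k>d$. So in any fixed homotopy degree $k$ the tower $\pi_k\Map(X_+,E_n)$ is eventually constant, and the Milnor sequence gives the claim.

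\emph{Injectivity.} Take $i\geq d$. It suffices to show that $[X_+,E_{n+1}]\to[X_+,E_n]$ is bijective for every $n$. Surjectivity holds because the obstruction group is $\Hr^{n+2}$ with $n+1\geq d+1$. For injectivity, the fibre over $f_n$ is the set of orbits of $\Hr^{n+1}(X,\Abf_{n+1}(\xi))$, and this group vanishes when $\Abf_{n+1}\neq0$, since then $n+1\geq i+1>d$. So each fibre is a single point. Composing these bijections starting from $E_i\simeq B$ and passing to the limit as above shows that $\pi_*$ is bijective.
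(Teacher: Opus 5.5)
Your proposal is correct and follows the paper's proof. You climb the Moore--Postnikov tower from $E_i\simeq B$; the obstructions lie in $\Hr^{n+2}(X,\pi_{n+1}^{\Abb^1}(F)(\xi))$ and the indeterminacy of lifts in $\Hr^{n+1}(X,\pi_{n+1}^{\Abb^1}(F)(\xi))$, and the cohomological dimension bound kills both in the relevant range. The paper only asserts that stagewise surjectivity (resp. bijectivity) suffices because $E\simeq\lim E_n$. Your Milnor-sequence argument supplies that step: the mapping-space tower is eventually constant in each homotopy degree, so $\lim^1\pi_1$ vanishes, which is exactly what injectivity needs.
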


The hypothesis of the above lemma is satisfied if $X$ has Krull dimension $\leq d$ since the Nisnevich cohomological dimension of a smooth $k$-scheme is bounded above by its Krull dimension.

\begin{proof}
Consider the Moore--Postnikov tower $(E_n,q_n)_n$ of $\pi$ (omitting the other notations from Theorem \ref{theo:moore--postnikov}). First assume that $i\geq d-1$ and let us prove that $\pi_*$ is surjective. Since $E\simeq\lim E_n$, it suffices to prove that the map $(q_{n+1})_*:[X_+,E_{n+1}]_{\Abb^1}\to[X_+,E_n]_{\Abb^1}$ is surjective for every $n\geq 2$ (the map $q_1$ being an equivalence). The obstructions to lifting a map $X\to E_n$ along $q_{n+1}$ up to homotopy live in $\widetilde{\Hr}^{n+2}(X,\pi_{n+1}^{\Abb^1}(F)(\xi))$ where $\xi$ is the $\pi_1^{\Abb^1}(B)$-torsor determined by the map $X\to E_n\to\Br\pi_1^{\Abb^1}(E_n)\simeq\Br\pi_1^{\Abb^1}(B)$. If $n\leq i-1$, then $\pi_{n+1}^{\Abb^1}(F)=0$ so the vanishing of obstruction classes is automatic. If $n\geq d-1$, then $\widetilde{\Hr}^{n+2}(X,\pi_{n+1}^{\Abb^1}(F)(\xi))=0$ by the cohomological dimension assumption on $X$. This shows that $(q_{n+1})_*$ is surjective.

Suppose now that $i\geq d$ and let us show that $\pi_*$ is bijective. As before, it suffices to prove that $(q_{n+1})_*$ is bijective for any $n$. We already know that these maps are surjective. Given a map $f_n:X\to E_n$, the set of $\Abb^1$-homotopy classes of lifts of $f_n$ along $q_{n+1}$ is a quotient of $\widetilde{\Hr}^{n+1}(X,\pi_{n+1}^{\Abb^1}(F)(\xi))$, and the same argument as before shows that, by the assumption that $i\geq d$, the group $\widetilde{\Hr}^{n+1}(X,\pi_{n+1}^{\Abb^1}(F)(\xi))$ vanishes. Consequently, the set of lifts of $f_n$ up to homotopy is a singleton; since $f_n:X\to E_n$ is arbitrary, it implies that $(q_{n+1})_*$ is injective as required.
\end{proof}

\subsubsection*{Rost--Schmid complexes}

One of the principal features of strictly $\Abb^1$-invariant sheaves is that they admit an explicit Gersten-type resolution called the \emph{Rost--Schmid complex}. If $F$ is a presheaf of spaces on $\Sm_k$, we denote by $F_{-1}$ the \emph{contraction} of $F$ defined as the internal mapping space $F_{-1}=\underline{\Hom}(\Gm,F)$ in $\mathsf{P}(\Sm_k)$. The contraction has an obvious $\mathbb{G}_m$-action, and $\mathbb{G}_m$ acts by group sheaf automorphisms on the contractions of an abelian sheaf on $\Sm_k$ (pointed in the evident manner). If $\Abf$ is a sheaf of abelian groups on $\Sm_k$, then the $\Gm$-module $\Abf_{-1}$ can therefore be twisted by any $\Gm$-torsor. More precisely, let $\Lc$ be a line bundle on an essentially smooth $k$-scheme $X$ (that is, a cofiltered limit of smooth $k$-schemes with affine étale transition maps) and let $\Lc^0$ denote the sheaf of invertible sections of $\Lc$, which is a $\Gm$-torsor. The formula $(U\rightarrow X)\mapsto\Abf_{-1}(U)\otimes_{\Zb[\mathbb{G}_m(U)]}\Zb[\Lc^0(U)]$ determines a presheaf on the site $\Sm_k/X$ of $X$-schemes that are smooth over $k$ and we denote the associated Nisnevich sheaf by $\Abf_{-1}(\Lc)$. Since $k$ is perfect, every field extension of $k$ of finite transcendence degree determines (via its spectrum) an essentially smooth $k$-scheme so $\Abf_{-1}(K,\Lc)$ is defined if $K$ is the residue field of some object of $\Sm_k/X$ at one of its points and $\Lc$ is a $K$-vector space of dimension $1$. The Rost--Schmid complex then takes the following form: we have \[\Cr_\RS^c(X,\Abf)=\bigoplus_{x\in X^{(c)}}\Abf_{-c}(\kappa(x),\omega_{x/X})\] where $X^{(c)}$ is the set of codimension $c$ points of $X$, $\Abf_{-c}$ is the $d$-fold contraction of $\Abf$ and $\omega_{x/X}=\Lambda^c(\mathfrak{m}_x/\mathfrak{m}_x^2)$ is the fibre at $x$ of the conormal sheaf of the inclusion $x\hookrightarrow X$ (thus $\omega_{x/X}$ is canonically trivial if $x$ has codimension $0$, so that $\Abf_{-0}(\kappa(x),\omega_{x/X})=\Abf(\kappa(x))$ makes sense). The construction $\Cr_\RS^*(\text{--},\Abf)$ is obviously functorial in open immersions, and in fact also in smooth morphisms so it defines presheaves $\Cr_\RS^*(\text{--},\Abf)$ on the Zariski site $X_{\Zar}$ of open subschemes of $X$ and on the small Nisnevich site of $X$. In fact, it is not difficult to check that these are flasque sheaves, and localisation at the codimension $0$ points determines morphisms $\Abf_{|X_{\Zar}}\rightarrow\mathscr{C}_\RS^0(\text{--},\Abf)_{|X_{\Zar}}$ and $\Abf_{|X_{\Nis}}\rightarrow\mathscr{C}_\RS^0(\text{--},\Abf)_{|X_{\Nis}}$. A deep theorem of Morel \cite[Corollary 5.43]{morelA1AlgebraicTopologyField2012} then asserts that $\mathscr{C}_\RS^*(\text{--},\Abf)$ is a flasque resolution of the restriction of $\Abf$ to $X_{\Zar}$ and to $X_{\Nis}$. Therefore the Rost--Schmid complex may be used to compute Nisnevich and Zariski sheaf cohomology; in particular, these cohomologies actually coincide, so that $\Hr_{\Nis}^*(X,\Abf)\cong\Hr^*(\Cr_\RS(X,\Abf))\cong\Hr_{\Zar}^*(X,\Abf)$.

\begin{ex}
The unramified Milnor $\Kr$-theory sheaf $\Kbf_n^\Mr$ is strictly $\Abb^1$-invariant for every $n$, and its contractions are computed as $(\Kbf_n^\Mr)_{-m}\cong\Kbf_{n-m}^\Mr$. If $n\leq d$, its Rost--Schmid complex, which in fact coincides with the Rost complex of \cite{rostChowGroupsCoefficients1996}, terminates as \[\bigoplus_{x\in X^{(n-1)}}\Kr_1^\Mr(\kappa(x))\xrightarrow{\partial}\bigoplus_{x\in X^{(n)}}\Kr_0^\Mr(\kappa(x))\rightarrow 0\] (the $\mathbb{G}_m$-action on Milnor $\Kr$-theory is trivial so no twist appears). Modulo the usual identifications $\Kbf_0^\Mr(F)=\Zb$ and $\Kbf_1^\Mr(F)=F^\times$ for every field $F$, the residue $\partial$ is the usual divisor map defining Chow groups so $\CH^n(X)=\Hr_{\Nis}^n(X,\Kbf_n^\Mr)=\Hr_{\Zar}^n(X,\Kbf_n^\Mr)$.
\end{ex}

If $\Abf\simeq\Bbf_{-1}$ is itself a contraction, we have an entirely similar story after twisting by a line bundle: we have a twisted Rost--Schmid complex for any line bundle $\Lc$ on the essentially smooth $k$-scheme $X$, of the form \[\Cr_\RS^c(X,\Abf;\Lc)=\bigoplus_{x\in X^{(c)}}\Abf_{-c}(\kappa(x),\omega_{x/X}\otimes\Lc(x))\] (where $\Lc(x)$ is the fibre of $\Lc$ at $x$), and this twisted Rost--Schmid complex determines a flasque resolution of the twist $\Abf(\Lc)=\Bbf_{-1}(\Lc)$. Hence $\Hr_{\Nis}^*(X,\Abf(\Lc))\cong\Hr^*(\Cr_\RS(X,\Abf;\Lc))\cong\Hr_{\Zar}^*(X,\Abf(\Lc))$.

In the sequel, we will essentially manipulate the cohomology of strictly $\Abb^1$-invariant sheaves. For such sheaves, it does not matter whether we use the Nisnevich or Zariski topology to compute cohomology, the result being the cohomology of the (twisted) Rost--Schmid complex in both cases. We therefore drop any subscript indicating the topology for the cohomology of such sheaves. On the other hand, we will have to use the étale topology for the computation of the cohomology of some sheaves (mostly tensor powers of sheaves of roots of unity); in these cases, we will always indicate that the étale topology is used with the subscript $\et$.

\subsubsection*{Milnor--Witt $\Kr$-theory}

Recall from \cite[Chapter 3]{morelA1AlgebraicTopologyField2012} the Milnor--Witt $\Kr$-theory sheaf $\Kbf_*^\MW$ of $\Zb$-graded rings. It is defined by generators and relations over fields and extended to a sheaf on $\Sm_k$ by the technology of unramified sheaves. One of the central features of this sheaf is that there is a $\Gm$-equivariant exact sequence 
\begin{equation}\label{eq:exact_sequence_mw_ktheory}
0\to\Ibf^{*+1}\to\Kbf_*^\MW\to\Kbf_*^\Mr\to 0
\end{equation}
of $\Zb$-graded strictly $\Abb^1$-invariant sheaves. Here $\Ibf^*$ is the graded sheaf whose $n$-th component $\Ibf^n$ is the $n$-th power of the fundamental ideal $\Ibf$ of the Witt ring $\mathbf{W}$ of symmetric bilinear forms \cite[Example 3.34]{morelA1AlgebraicTopologyField2012}.

\subsubsection*{Milnor $\Kr$-theory and étale cohomology}

Let $F$ be a field of characteristic $0$ and let $\ell$ be a prime number. Using symbols in Galois cohomology, one can define a \emph{norm residue homomorphism} \[h_n:\Kr_n^\Mr(F)/\ell\rightarrow\Hr_\et^n(F,\mu_\ell^{\otimes n})\] given by $\{a\}\mapsto(a)\in\Hr_\et^1(F,\mu_\ell)=F^\times/(F^\times)^\ell$ when $n=1$ and extended to any $n$ so that the maps $h_*$ assemble into a graded ring homomorphism. By the affirmation of the Milnor and Bloch--Kato conjectures (see in particular \cite{voevodskyMotivicCohomology2coefficients2003,haesemeyerNormResidueTheorem2019}), this morphism is an isomorphism. It is compatible with residue homomorphisms and therefore defines an isomorphism $\Kbf_n^\Mr/\ell\cong\Hbf^n(\ell)$ of unramified sheaves where $\Hbf^n(\ell)$ is the sheaf on $\Sm_k$ associated with the cycle module $F\mapsto\Hr_\et^n(F,\mu_\ell^{\otimes n})$.

\subsection{Grothendieck--Witt groups}

As we work in characteristic unequal to $2$, it will be convenient to use Schlichting's construction \cite{schlichtingHermitianKtheoryDerived2017} of Hermitian $\Kr$-theory, also known as higher Grothendieck--Witt groups. For $X\in\Sm_k$, we consider the category $\Dr^b(X)$ of strictly perfect complexes, namely of locally free $\Osc_X$-modules of finite rank. We endow it with the duality $\sharp$ given by $(\Esc_\bullet)^\sharp=\underline{\Hom}_{\Dr^b(X)}(\Esc_\bullet,\Osc_X[0])$ in which $\Osc_X[0]$ denotes $\Osc_X$ regarded as a complex concentrated in degree $0$, and with the biduality isomorphism $\mathrm{can}:\Esc_\bullet\to\Esc_\bullet^{\sharp\sharp}$ given by $\mathrm{can}(x)(f)=(-1)^{|x||f|}f(x)$. Together with the collection of quasi-isomorphisms of strictly perfect complexes, these data define a dg-category with weak equivalences and duality in the sense of \cite{schlichtingHermitianKtheoryDerived2017}. To such a structure, Schlichting attaches for each $n\in\Zb$ a spectrum $\mathscr{GW}^n(X)$ called the $n$-th shifted Grothendieck--Witt spectrum of $X$. We then write $\GWr_i^n(X)=\pi_i(\mathscr{GW}^n(X))$ for the $i$-th homotopy group of this spectrum: these are Schlichting's higher Grothendieck--Witt groups of $X$. These groups are contravariantly functorial with respect to morphisms of $k$-schemes between smooth $k$-schemes. We also denote by $\GW_i^n$ the sheafification of the presheaf $U\mapsto\GWr_i^n(U)$ on $\Sm_k$. The sheaf $\GW_i^n$ is the $i$-th $\Abb^1$-homotopy sheaf of the motivic space $\underline{\mathcal{GW}}^n$ representing $n$-shifted Grothendieck--Witt groups constructed by Hornbostel in \cite{hornbostelA1representabilityHermitianKtheory2005} and is in particular strictly $\Abb^1$-invariant by Morel's theorem \ref{theo:strictly_invariant}. We have $(\GW_{i+1}^{n+1})_{-1}\cong\GW_{i}^{n}$ by \cite[Proposition 4.4]{asokCohomologicalClassificationVector2014}; in particular, there is a canonical additive action of $\mathbb{G}_m$ on the contracted sheaf $\GW_i^n$.

Grothendieck--Witt groups are a ``symmetric'' refinement of algebraic $\Kr$-theory and can be compared to it in two ways. First, there is a forgetful morphism that only remembers the weak equivalences of the category of strictly perfect complexes; this determines a morphism $f:\mathscr{GW}^n(X)\to\mathscr{K}(X)$ to the connective algebraic $\Kr$-theory spectrum of $X$. On the other hand, for every $n$, any strictly perfect complex $\Esc_\bullet$ on $X$ induces a hyperbolic form $H(\Esc_\bullet)$ for the $n$-th shifted duality on $\Dr^b(X)$ and we obtain a morphism $H:\mathscr{K}(X)\to\mathscr{GW}^{n+1}(X)$ by functoriality. Finally, the tensor product of complexes determines a cup-product structure on Grothendieck--Witt groups. Then there exists a canonical element $\eta\in\GWr_{-1}^{-1}(k)$ such that cup-product with $\eta$ fits in an exact triangle \[\mathscr{GW}^n(X)\xrightarrow{f}\mathscr{K}(X)\xrightarrow{H}\mathscr{GW}^{n+1}(X)\xrightarrow{\eta\cup}\Sr^1\wedge\mathscr{GW}^n(X)\] in the homotopy category of spectra \cite[Theorem 6.1]{schlichtingHermitianKtheoryDerived2017} that is natural in $X$. Taking homotopy groups, this yields an exact sequence \[\cdots\to\GWr_i^{n}(X)\xrightarrow{f}\Kr_i(X)\xrightarrow{H}\GWr_i^{n+1}(X)\xrightarrow{\eta}\GWr_{i-1}^n(X)\to\cdots\] of abelian groups called the \emph{Karoubi periodicity} exact sequence. This exact sequence can be sheafified for the Nisnevich topology to yield an exact sequence \[\cdots\to\GW_i^{n}\xrightarrow{f}\Kbf_i\xrightarrow{H}\GW_i^{n+1}\xrightarrow{\eta}\GW_{i-1}^n\to\cdots\] of strictly $\Abb^1$-invariant sheaves on $\Sm_k$.

\subsection{Some homotopy sheaves of motivic spheres}

With our use of obstruction theory applied to the $\Abb^1$-fibre sequence (\ref{eq:fibre_sequence_classifying_spaces}) and our comparisons of (weak) Euler class groups and Chow(--Witt) groups in mind, we review some facts on $\Abb^1$-homotopy sheaves of motivic spheres. We recall that the simplicial circle $\Sr^1=\Delta^1/\partial\Delta^1$ and the geometric circle $\mathbb{G}_m$ define a bigraded sphere $\Sr^{p,q}=\Sr^p\wedge\mathbb G_m^{\wedge q}$ for every $p,q\geq 0$. For example, we have pointed $\Abb^1$-equivalences $\Sr^{n-1,n}\simeq\Abb^n\smallsetminus \{0\}$ \cite[§3, Example 2.20]{morelA1homotopyTheorySchemes1999} and $\Sr^{n,n}\simeq Q_{2n}$ \cite[Theorem 2.2.5]{asokSmoothModelsMotivic2017} where $Q_{2n}$ is the affine scheme given by $Q_{2n}=\Spec k[x_1,\ldots,x_n,y_1,\ldots,y_n,z]/\langle x_1y_1+\cdots+x_ny_n-z(1-z)\rangle$. Thus there is a pointed $\Abb^1$-equivalence $Q_{2n}\simeq\Sr^1\wedge(\Abb^n\smallsetminus \{0\})$.

\begin{thm}\label{thm:homotopy_sheaves_of_motivic_spheres}
We have the following computations of $\Abb^1$-homotopy sheaves of motivic spheres.
\begin{enumerate}
    \item There is an exact sequence \[0\to\Tbf_4'\to\pi_2^{\Abb^1}(\Abb^2\smallsetminus \{0\})\to\GW_3^2\to 0\] of sheaves on $\Sm_k$. The sheaf $\Tbf_4'$ fits in an exact sequence \[\Ibf^5\to\Tbf_4'\to\Sbf_4'\to 0\] where $\Sbf_4'$ is a quotient of the sheaf $\Kbf_4^\Mr/12$.
    \item There is an exact sequence \[0\to\Fbf_5\rightarrow\pi_3^{\Abb^1}(\Abb^3\smallsetminus \{0\})\to\GW_4^3\to 0\] of sheaves on $\Sm_k$. The sheaf $\Fbf_5$ is a quotient of a sheaf $\Tbf_5$, where the sheaf $\Tbf_5$ sits in an exact sequence of the form \[\Ibf^6\rightarrow\Tbf_5\rightarrow\Sbf_5\rightarrow 0\] and the sheaf $\Sbf_5$ is a quotient of $\Kbf_5^\Mr/24$.
    \item Let $n\geq 4$ and suppose that $k$ has characteristic $0$. Then there is an exact sequence \[0\rightarrow\Kbf_{n+2}^\Mr/24\rightarrow\pi_n^{\Abb^1}(\Abb^n\smallsetminus \{0\})\rightarrow\GW_{n+1}^n\] of sheaves on $\Sm_k$. Moreover, letting $\Qbf$ be the cokernel of the homomorphism $\pi_n^{\Abb^1}(\Abb^n\smallsetminus \{0\})\rightarrow\GW_{n+1}^n$, the $(n-3)$-fold contraction $\Qbf_{-(n-3)}$ is the zero sheaf.
    \item Let $n\geq 3$. Then the suspension morphism $\pi_n^{\Abb^1}(\Abb^n\smallsetminus\{0\})\rightarrow\pi_{n+1}^{\Abb^1}(Q_{2n})$ is an epimorphism. If $n\geq 4$, then this morphism is an isomorphism.
\end{enumerate}
Moreover, the above exact sequences and morphisms are equivariant with respect to $\Gm$-actions.
\end{thm}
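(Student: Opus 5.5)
This theorem collects computations from the literature, so I would prove it by citing and assembling known results rather than computing from scratch.

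For (1), the starting point is the Asok--Fasel computation of $\pi_2^{\Abb^1}(\Abb^2\smallsetminus\{0\})$. There $\Abb^2\smallsetminus\{0\}\simeq\mathrm{SL}_2$, and the natural map $\mathrm{SL}_2=\mathrm{Sp}_2\to\mathrm{Sp}_\infty$ induces the epimorphism onto $\GW_3^2=\pi_2^{\Abb^1}(\mathrm{Sp}_\infty)$. The kernel $\Tbf_4'$ is described in the paper computing $\pi_3^{\Abb^1}(\Abb^3\smallsetminus 0)$ and the $\pi_2$ of $\Abb^2\smallsetminus 0$. The presentation $\Ibf^5\to\Tbf_4'\to\Sbf_4'\to 0$, with $\Sbf_4'$ a quotient of $\Kbf_4^\Mr/12$, is exactly their structural result; the integer $12$ comes from the motivic image-of-$J$ / Hopf-map computations.

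For (2), I would use the Asok--Fasel theorem on $\pi_3^{\Abb^1}(\Abb^3\smallsetminus 0)$, in which $\Fbf_5$ is a quotient of $\Tbf_5$. The extension by $\GW_4^3$ comes from the stabilization map $\Abb^3\smallsetminus 0\to\mathrm{SL}_4/\mathrm{Sp}_4\to\mathrm{GL}/\mathrm{Sp}$, and $\Sbf_5$ is a quotient of $\Kbf_5^\Mr/24$.

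For (3) and (4), I would combine two ingredients. The first is the results of Asok--Bachmann--Hopkins on the motivic Freudenthal theorem and the stable $1$-line in characteristic $0$. The second is the analysis used in \cite{faselSuslinsCancellationConjecture2025}, where exactly this statement (the exact sequence with $\Kbf_{n+2}^\Mr/24$, and the vanishing of the $(n-3)$-fold contraction of the cokernel) is recorded. For (4), the key tool is the motivic Freudenthal suspension theorem. It gives that $\Sigma:\pi_n^{\Abb^1}(\Sr^{n-1,n})\to\pi_{n+1}^{\Abb^1}(\Sr^{n,n})$ is surjective in the edge degree and bijective when $n\ge4$, once one checks the required connectivity of the fibre of $X\to\Omega\Sigma X$. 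One also uses $Q_{2n}\simeq\Sigma(\Abb^n\smallsetminus 0)$.

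The main obstacle is $\Gm$-equivariance. I would argue it by naturality: $\Gm$ acts on $\Abb^n\smallsetminus0$ by scaling a coordinate, and every map involved (stabilization to $\mathrm{GL}/\mathrm{Sp}$, suspension, Hopf/$\eta$-multiplication) is natural with respect to these automorphisms. The induced actions on $\GW$, $\Kbf^\Mr$ and $\Ibf$ are then the standard contraction actions. So the sequences, which are built from morphisms of homotopy sheaves induced by equivariant maps of spaces, are equivariant.
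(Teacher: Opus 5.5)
Your proposal follows essentially the same route as the paper: (1)--(3) are quoted from \cite[Theorem 3.3]{asokCohomologicalClassificationVector2014} (using $\mathrm{Sp}_2=\mathrm{SL}_2\simeq\Abb^2\smallsetminus\{0\}$), \cite[Theorem 4.4.1]{asokSplittingVectorBundles2014} and \cite[Theorem 7.2.1]{asokP1stabilizationUnstableMotivic}; (4) is Morel's simplicial suspension theorem \cite[Theorem 6.61]{morelA1AlgebraicTopologyField2012}; and equivariance follows from the naturality of the fibre-sequence and suspension maps. You should make the numerics in (4) explicit: $\Abb^n\smallsetminus\{0\}$ is $\Abb^1$-$(n-2)$-connected \cite[Theorem 6.38]{morelA1AlgebraicTopologyField2012}, so suspension is an isomorphism on $\pi_i^{\Abb^1}$ for $i\le 2n-4$ and an epimorphism for $i=2n-3$, which at $i=n$ gives an epimorphism for $n=3$ and an isomorphism for $n\ge 4$; also, $\Tbf_4'$ comes from the threefolds paper, not the $\pi_3^{\Abb^1}(\Abb^3\smallsetminus\{0\})$ paper.
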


\begin{proof}
For 1., 2. and 3., one uses \cite[Theorem 3.3]{asokCohomologicalClassificationVector2014} (noting that $\operatorname{Sp}_2=\mathrm{SL}_2$ is $\Abb^1$-equivalent to $\Abb^2\smallsetminus \{0\}$ by projection onto the first row), \cite[Theorem 4.4.1]{asokSplittingVectorBundles2014} and \cite[Theorem 7.2.1]{asokP1stabilizationUnstableMotivic} respectively. For the final statement, recall that Morel's $\Abb^1$-suspension theorem \cite[Theorem 6.61]{morelA1AlgebraicTopologyField2012} implies
that if $X$ is an $\Abb^1$-$(n-2)$-connected space with $n\geq 3$, then the suspension homomorphism $\pi_i^{\Abb^1}(X)\rightarrow\pi_{i+1}^{\Abb^1}(\Sigma X)$ is an isomorphism for every $i\leq 2n-4$ and an epimorphism for $i=2n-3$. Now recall that the space $X=\Abb^n\smallsetminus \{0\}$ is $\Abb^1$-$(n-2)$ connected for every $n\geq 3$ (\cite[Theorem 6.38]{morelA1AlgebraicTopologyField2012}). This completes the proof of the fourth item. The assertion on $\Gm$-equivariance then follows from the fact that these morphisms are obtained from long exact sequences of homotopy sheaves for $\Abb^1$-fibre sequences or from suspension homomorphisms.
\end{proof}

\subsection{The Bloch--Ogus spectral sequence}\label{sec:BlochOgus}

This is a coniveau spectral sequence in étale cohomology of the form \[\Er_1^{p,q}(X)=\bigoplus_{x\in X^{(p)}}\Hr_\et^{q-p}(\kappa(x),\mu_\ell^{\otimes(q-p)})\Rightarrow\Hr_\et^{p+q}(X,\mu_\ell).\] The Bloch--Ogus theorem \cite[(4.2) Theorem]{blochGerstensConjectureHomology1974} implies that the line $q=n$ of this complex is a flasque resolution of the sheaf $\Hsc^n(\ell)$ associated with the presheaf $U\mapsto\Hr_\et^n(U,\mu_\ell^{\otimes n})$ on $X$. It follows that the Nisnevich sheafification of the presheaf $U\mapsto\Hr_\et^n(U,\mu_\ell^{\otimes n})$ coincides with $\Hbf^n(\ell)$ (we refer the reader to \emph{e.g.} \cite[Theorem 2.24]{lerbetImageHigherSignature2026} for more details on comparisons of this type). In particular, the cohomology of the latter can be studied using the Bloch--Ogus spectral sequence, since we then have $\Er_2^{p,q}(X)=\Hr^p(X,\Hbf^q(\ell))$. Here is a sample application.

\begin{lem}\label{lem:bloch_ogus_alg_closed}
Let $\ell$ be a prime number. Let $k$ be an algebraically closed field of characteristic $0$, let $Y$ be a smoothy affine $k$-scheme of dimension $d>1$ and let $n\geq d$. Then the groups $\Hr^{d}(Y,\Hbf^n(\ell))$ and $\Hr^{d-1}(Y,\Hbf^n(\ell))$ vanish.
\end{lem}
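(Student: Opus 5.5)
The plan is to use the Bloch--Ogus spectral sequence of Section \ref{sec:BlochOgus}, $\Er_2^{p,q}(Y)=\Hr^p(Y,\Hbf^q(\ell))\Rightarrow\Hr_\et^{p+q}(Y,\mu_\ell^{\otimes ?})$, together with two inputs: Artin's affine vanishing theorem and the bound on cohomological dimension of function fields. Over $k$ algebraically closed we may identify $\mu_\ell^{\otimes j}\cong\Zb/\ell$ for every $j$, so twists play no role. Artin vanishing says $\Hr_\et^m(Y,\Zb/\ell)=0$ for $m>d$. For a point $x\in Y^{(p)}$, the residue field $\kappa(x)$ has transcendence degree $d-p$ over $k$, so its Galois cohomological dimension is $d-p$. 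Hence $\Er_1^{p,q}=0$ whenever $q-p>d-p$, i.e.\ $q>d$. Consequently $\Er_2^{p,q}=0$ for $q>d$, and the lemma is trivial for $n>d$. The only real case is therefore $n=d$.

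For $n=d$, the $\Er_1$ page vanishes in rows $q>d$, and $\Er_1^{p,q}$ with $p>q$ vanishes by construction; $\Er_2^{p,q}=0$ for $p>d$ since $\dim Y=d$. I will examine the groups $\Er_2^{d,d}$ and $\Er_2^{d-1,d}$, which sit in the top row. Differentials $d_r:\Er_r^{p,q}\to\Er_r^{p+r,q-r+1}$ go down in $q$. Nothing can hit the top row $q=d$, since sources would lie in row $q+r-1>d$, which is zero. Differentials out of $(d,d)$ land in column $d+r>d$, hence vanish. Differentials out of $(d-1,d)$ land at $(d-1+r,d-r+1)$ with $r\ge 2$, i.e.\ column $\ge d+1$, also zero. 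So both $\Er_2^{d,d}$ and $\Er_2^{d-1,d}$ survive to $\Er_\infty$. They are then subquotients of $\Hr_\et^{2d}(Y,\Zb/\ell)$ and $\Hr_\et^{2d-1}(Y,\Zb/\ell)$ respectively. Since $d>1$, we have $2d-1>d$, so both abutments vanish by Artin's theorem, and the lemma follows.

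The main step to get right is the bookkeeping of where differentials go, and confirming that the edge positions survive. The hypothesis $d>1$ is used exactly to ensure $2d-1>d$. I also need to be careful that in the statement $\Hbf^n(\ell)$ carries the twist $\mu_\ell^{\otimes n}$; this is harmless since $k$ contains all roots of unity.
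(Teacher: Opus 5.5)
Your proposal is correct and follows essentially the same route as the paper. Both arguments use the Bloch--Ogus spectral sequence, kill the rows $q>d$ via the cohomological dimension bound on residue fields, observe that the terms at $(d-1,d)$ and $(d,d)$ survive to $\Er_\infty$, and conclude by Artin vanishing of $\Hr_\et^{2d-1}$ and $\Hr_\et^{2d}$ using $d>1$; the paper identifies these $\Er_\infty$ terms with the abutments outright, whereas you only need them to be subquotients, which suffices.
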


\begin{proof}
We consider the Bloch--Ogus spectral sequence \[\Er_1^{p,q}(Y)=\bigoplus_{x\in Y^{(p)}}\Hr_\et^{q-p}(\kappa(x),\mu_\ell^{\otimes(q-p)})\Rightarrow\Hr_\et^{p+q}(Y,\mu_\ell).\] For cohomological dimension reasons, the group $\Hr_\et^{q-p}(\kappa(x),\mu_\ell)$ vanishes if $q>d$ for every $x\in Y^{(p)}$ (\cite[\href{https://stacks.math.columbia.edu/tag/0F0T}{Tag 0F0T}]{stacks-project}). Thus $\Er_1^{p,q}(Y)=0$ so $\Er_2^{p,q}(Y)=\Hr^p(Y,\Hbf^q(\ell))=0$ for every $q>d$. Moreover, the only non-trivial group on the line $p+q=2d$ (resp. $p+q=2d-1$) on the $\Er_2$-page of the Bloch--Ogus spectral sequence is the group $\Hr^d(Y,\Hbf^d(\ell))$ (resp. $\Hr^{d-1}(Y,\Hbf^d(\ell))$). Then for $(p,q)$ such that $p+q\in\{2d-1,2d\}$, since $\mathrm{E}_2^{p,q}(Y)=0$ for $p>d$ (as $Y^{(p)}=\emptyset$ for $p>d$) and for $q>d$, the differentials from and into $\Er_r^{p,q}(Y)$, which have bidegree $(r,-r+1)$, automatically vanish for every $r\geq 2$. It follows that $\mathrm{E}_\infty^{d-1,d}(Y)=\Hr^{d-1}(Y,\Hbf^d(\ell))=\Hr_\et^{2d-1}(Y,\mu_\ell)$ and $\mathrm{E}_\infty^{d,d}(Y)=\Hr^d(Y,\Hbf^d(\ell))=\Hr_\et^{2d}(Y,\mu_\ell)$. But since $Y$ is affine of dimension $d>1$ over an algebraically closed field, these étale cohomology groups vanish (\cite[\href{https://stacks.math.columbia.edu/tag/0F0V}{Tag 0F0V}]{stacks-project}). This completes the proof.
\end{proof}

\subsection{Unimodular rows and Mennicke symbols}\label{subsection:Um_n}

Let $R$ be a $k$-algebra and let $v=(v_1,\ldots,v_m)\in R^m$ be a row of length $m$. We say that $v$ is \emph{unimodular} if $\sum_i v_iw_i=1$ for some row $(w_1,\ldots,w_m)\in R^m$. We denote by $\Um_{m}(R)$ the set of unimodular rows of length $m$. Observe that any such row $v$ induces a morphism $f(v):\Spec(R)\to\Abb^n\smallsetminus\{0\}$: the map $f(v)$ is entirely determined by the fact that its composite with the open immersion $\Abb^n\smallsetminus\{0\}\hookrightarrow\Abb^n$ is the morphism of affine schemes associated with the morphism of rings $k[x_1,\ldots,x_n]\to R$ carrying $x_i$ to $v_i$. Note that $\GL_m(R)$ acts on $\Um_m(R)$ by multiplication on the right, hence there is an induced action of all subgroups of $\GL_m(R)$; the subgroup of interest is often the group $\mathrm{E}_m(R)$ of elementary matrices generated by transvection matrices. Moreover, if $(v_1,\ldots,v_m)\in\Um_m(R)$, then there is an induced stably free $R$-module $P$ defined as $P=\{(y_1,\ldots,y_m),\sum_i v_iy_i=0\}\subseteq R^m$: indeed, if $\sum v_iw_i=1$, then $P\oplus R\cdot(v_1,\ldots,v_m)=R^m$.

\begin{defn}	\label{weakMennickeSymbol}
A \emph{Mennicke symbol} of length $n + 1 \ge 3$ is a pair $(G,\psi)$,
where $G$ is a group and $\psi : \Um_{n+1}(R) \to G$ is a map such that:
\begin{itemize}
	\item[(1)]  $\psi((0,\ldots,0, 1)) = 1$ and $\psi(v) = \psi(v\epsilon)$ for any $v\in \Um_{n+1}(R)$ and $\epsilon \in \mathrm{E}_{n+1}(R)$;
	\item[(2)] $\psi((b_1,\ldots, b_n, x))\psi((b_1,\ldots,b_n, y)) = \psi((b_1,\ldots,b_n, xy))$ for any two unimodular rows $(b_1,\ldots, b_n, x)$ and $(b_1,\ldots,b_n, y)$ in $\Um_{n+1}(R)$.
\end{itemize}
\end{defn}

A prototypical example of such a symbol for $n\geq 2$ is constructed as follows. First, recall from \cite[\S 3.3]{faselRemarksOrbitSets2010a} that $\Hr^n(\A^{n+1}\smallsetminus \{0\},\KM_{n+1})=\Z\cdot \theta_n$ for an explicit cycle $\theta_n$. We can then consider the map $\psi_n\colon \Um_{n+1}(R)\to \Hr^n(\Spec(R),\KM_{n+1})$ defined by
\[
v\mapsto f(v)^*(\theta_n).
\]

\begin{prop}\label{Milnor K-cohomology is a Mennicke symbol}
If $X=\Spec(R)$ is a smooth affine variety over a perfect field $k$, the pair $(\Hr^{n}(X,\KM_{n+1}),\psi_n)$ is a Mennicke symbol for all $n\ge 2$.
\end{prop}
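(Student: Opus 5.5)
We need to verify the two axioms for $\psi_n(v)=f(v)^*(\theta_n)$. The guiding idea is to interpret $\psi_n$ homotopically. Since $X$ is smooth affine, affine representability (or simply the $\Abb^1$-invariance of $\Kbf^\Mr_{n+1}$-cohomology together with naive homotopies) lets us regard $v\mapsto f(v)$ as giving a class in $[X_+,\Abb^{n+1}\smallsetminus\{0\}]_{\Abb^1}$. We then view $\psi_n$ as the composite of this class with the map $\Abb^{n+1}\smallsetminus\{0\}\to\Kr(\Kbf^\Mr_{n+1},n)$ classifying $\theta_n$.

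\textbf{Axiom (1).} First, $f((0,\ldots,0,1))$ factors through the base point $\Spec k$. Hence $\psi_n$ of it is the pullback of $\theta_n$ to a point, and this lies in $\Hr^n(\Spec k,\Kbf^\Mr_{n+1})=0$ because $n\geq 2>0$. (Equivalently, it is the pullback along a constant map $X\to\Spec k$.) Second, for $\epsilon\in\Er_{n+1}(R)$ we choose a path $\epsilon(t)\in\Er_{n+1}(R[t])$ from $1$ to $\epsilon$, obtained by replacing each transvection $e_{ij}(a)$ by $e_{ij}(ta)$. Then $v\epsilon(t)\in\Um_{n+1}(R[t])$ gives a morphism $X\times\Abb^1\to\Abb^{n+1}\smallsetminus\{0\}$ restricting to $f(v)$ at $t=0$ and to $f(v\epsilon)$ at $t=1$. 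By homotopy invariance $\Hr^n(X\times\Abb^1,\Kbf^\Mr_{n+1})\cong\Hr^n(X,\Kbf^\Mr_{n+1})$, so $\psi_n(v)=\psi_n(v\epsilon)$.

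\textbf{Axiom (2).} This is the main obstacle. We plan to reduce it to a universal computation. Let $Z\subset\Abb^{n+2}$ be the open subscheme with coordinates $(b_1,\ldots,b_n,x,y)$ on which both $(b,x)$ and $(b,y)$ are unimodular, i.e.\ the complement of $\{b=0,\,xy=0\}$. Then $(b,xy)$ is also unimodular on $Z$, and the three rows give maps $p_1,p_2,m\colon Z\to\Abb^{n+1}\smallsetminus\{0\}$. Any pair of rows as in (2) factors through a morphism $X\to Z$, so by naturality it suffices to prove $m^*\theta_n=p_1^*\theta_n+p_2^*\theta_n$ in $\Hr^n(Z,\Kbf^\Mr_{n+1})$.

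To compute this group we use the localization sequence for the closed subset $\{b=0\}\cap\{xy=0\}$. This subset is $\Abb^2\smallsetminus\Gm^2$ in the $(x,y)$-plane and has codimension $n$ in the open set $\{b\neq 0\}\cup\{xy\neq 0\}$. Concretely, working with the Rost--Schmid (Rost) complex and the explicit cycle $\theta_n$ of \cite{faselRemarksOrbitSets2010a}, $\theta_n$ is represented near the locus $b_1=\cdots=b_{n-1}=0$ by the class of $\langle$last coordinate$\rangle$ in $\Kr_1^\Mr$ of the residue field of the codimension $n$ point $\{b_1=\cdots=b_n=0\}$ (appropriately normalized). Pulling back along $m$, $p_1$, $p_2$, these become the symbols $\{xy\}$, $\{x\}$ and $\{y\}$ supported on $\{b=0\}\cap Z$. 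The relation $\{xy\}=\{x\}+\{y\}$ in $\Kr_1^\Mr$ of the function field of $\{b=0\}\cap Z=\Gm^2$ then gives the identity at the level of cycles. The delicate point is to ensure that $f(v)^*$ of the explicit cycle is computed by this naive pullback, which requires transversality. We handle this by first reducing, via axiom (1) and the $\Abb^1$-invariance above, to rows where $b_1,\ldots,b_n$ define a regular sequence with reduced quotient; Swan's Bertini-type lemma (elementary moves) provides this when $X$ is affine and $n\geq 2$. Alternatively, one can argue in the universal case, where $Z$ is smooth and the loci are transverse by construction, so that no general position argument is needed.

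Finally, the group structure on $\Hr^n(X,\Kbf^\Mr_{n+1})$ is abelian, so writing the axiom multiplicatively as in the definition of a Mennicke symbol is harmless.
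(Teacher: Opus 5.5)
Your proof is correct, and since the paper gives no argument of its own (it only refers to the proof of Theorem 4.1 in \cite{faselRemarksOrbitSets2010a}), it is a self-contained alternative rather than a reproduction.

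Your treatment of axiom (1) is the standard one: the constant map factors through a point, where $\Hr^n(\Spec k,\KM_{n+1})=0$, and the naive homotopy $v\epsilon(t)$ combined with $\Abb^1$-invariance gives $\Er_{n+1}$-invariance.

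The real content is your reduction of (2) to the universal open $Z=\{(b,xy)\neq 0\}\subset\Abb^{n+2}$. There $p_1$ and $p_2$ are smooth, being restrictions of coordinate projections, and $m$ is flat, being a base change of $(x,y)\mapsto xy$. All three maps pull $\{x_1=\cdots=x_n=0\}$ back to the same reduced locus $\{b=0\}\cap Z\cong\Gm^2$ with multiplicity one. Rost's flat pullback of the cycle $\{x_{n+1}\}$ therefore gives $\{xy\}$, $\{x\}$ and $\{y\}$ at its generic point. Hence $m^*\theta_n=p_1^*\theta_n+p_2^*\theta_n$ already holds in $\Cr_\RS^n(Z,\KM_{n+1})$, and pulling back along $X\to Z$ is plain functoriality.

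Arguing directly on $X$ would require first moving the rows into good position by elementary operations, and then justifying that $f(v)^*$ of the explicit cycle can be computed naively. Your route avoids both: it needs no general position or transversality lemma, and it works for any smooth $X$, affine or not.

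Consequently your paragraph invoking Swan's Bertini theorem is superfluous, and as written it is confused. $Z$ is not affine, and the $b_i$ are already coordinates on it. A reduction over $X$ would also have to move all three rows simultaneously, for example by adding multiples of $xy$ to the $b_i$.

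Two slips should be fixed:
\begin{itemize}
\item The set $\{b=0,\ xy=0\}$ is the complement of $Z$, not a codimension-$n$ subset of it. The relevant codimension-$n$ locus is $\{b=0\}\cap Z$.
\item The identification $\theta_n=\pm[\{x_{n+1}\}]$, supported on $\{x_1=\cdots=x_n=0\}$, deserves a line. The localization sequence for this closed $\Gm\subset\Abb^{n+1}\smallsetminus\{0\}$ shows that this class generates $\Hr^n(\Abb^{n+1}\smallsetminus\{0\},\KM_{n+1})\cong\Zb$. The constant classes $[\{a\}]$ die there, being boundaries of $[\{a,x_n\}]$ on $\{x_1=\cdots=x_{n-1}=0\}$.
\end{itemize}
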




\begin{proof}
This is established in \cite[proof of Theorem 4.1]{faselRemarksOrbitSets2010a}. 
\end{proof}


\section{Some cohomological vanishing statements}\label{section:cohomological_vanishing_statements}

\subsubsection*{General cohomological vanishing results}

\begin{lem}\label{lem:right_exactness_hd}
Let $X$ be a smooth $k$-scheme of dimension $d$. Then the functor $\Hr^d(X,\text{--}):\mathrm{Ab}(X_{\Zar})\to\mathrm{Ab}$ is right-exact.
\end{lem}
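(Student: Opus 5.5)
The plan is to deduce right-exactness from the vanishing of Zariski cohomology above the dimension. By Grothendieck's vanishing theorem, $X$ is a Noetherian topological space of Krull dimension $d$ (it is of finite type over $k$). Hence $\Hr^{j}(X,\mathscr{F})=0$ for every sheaf of abelian groups $\mathscr{F}$ on $X_{\Zar}$ and every $j>d$; in particular this holds for $j=d+1$.

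Let $0\to\mathscr{F}'\to\mathscr{F}\to\mathscr{F}''\to 0$ be a short exact sequence in $\mathrm{Ab}(X_{\Zar})$. The associated long exact cohomology sequence contains the segment
\[\Hr^d(X,\mathscr{F}')\to\Hr^d(X,\mathscr{F})\to\Hr^d(X,\mathscr{F}'')\to\Hr^{d+1}(X,\mathscr{F}')=0,\]
which is exactly the statement that $\Hr^d(X,\text{--})$ carries short exact sequences to right-exact sequences. To obtain right-exactness in the general sense, namely preservation of cokernels for a right-exact sequence $\mathscr{F}'\to\mathscr{F}\to\mathscr{F}''\to 0$, I would factor it through the image $\mathscr{K}$ of $\mathscr{F}'\to\mathscr{F}$. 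This gives an epimorphism $\mathscr{F}'\twoheadrightarrow\mathscr{K}$ and a short exact sequence $0\to\mathscr{K}\to\mathscr{F}\to\mathscr{F}''\to 0$.

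The key point is then that $\Hr^d(X,\text{--})$ preserves epimorphisms. Applying the vanishing of $\Hr^{d+1}$ to the kernel of $\mathscr{F}'\twoheadrightarrow\mathscr{K}$ shows that $\Hr^d(X,\mathscr{F}')\to\Hr^d(X,\mathscr{K})$ is surjective. Combining this with the exact segment above for $\mathscr{K}$ gives exactness of $\Hr^d(X,\mathscr{F}')\to\Hr^d(X,\mathscr{F})\to\Hr^d(X,\mathscr{F}'')\to 0$.

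There is no real obstacle. The only input beyond formal homological algebra is Grothendieck vanishing, and it is stated for Noetherian spaces of dimension $d$, which covers the underlying space of $X$ since $\dim X=d$.
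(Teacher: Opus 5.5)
Your proposal is correct and follows the paper's proof: both use Grothendieck's vanishing of $\Hr^{d+1}$ on the $d$-dimensional Noetherian space $X$ to show that $\Hr^d(X,\text{--})$ preserves epimorphisms, and then deduce right-exactness formally. You spell out the image-factorisation step that the paper leaves as ``general homological algebra''; one small wording point is that $X$ is Noetherian because it is of finite type over $k$, and Grothendieck's theorem only supplies the vanishing.
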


\begin{proof}
This functor is additive. By general homological algebra, it then suffices to prove that it preserves epimorphisms. The cohomology long exact sequence associated with the epimorphism $f:\Abf\rightarrow\Bbf$ of abelian sheaves on $X$ exhibits the cokernel of the map $\Hr^d(X,f):\Hr^d(X,\Abf)\to\Hr^d(X,\Bbf)$ as a subgroup of $\Hr^{d+1}(X,\ker f)$ which vanishes by \cite[Théorème 3.6.5]{grothendieckQuelquesPointsDalgebre1957} since $X$ is Noetherian of dimension $d$.
\end{proof}

\begin{cor}\label{cor:divisibility_vanishing_multiplication_by_n_non_unique}
Let $X$ be a smooth $k$-scheme of dimension $d$, let $a\geq 1$ be an integer and let $\Abf$ be an abelian sheaf on $X$. Then $\Hr^d(X,\Abf)$ is $a$-divisible if, and only if, the group $\Hr^d(X,\Abf/a)$ vanishes.
\end{cor}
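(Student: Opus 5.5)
We will deduce this from Lemma \ref{lem:right_exactness_hd} applied to the short exact sequences obtained by factoring multiplication by $a$ on $\Abf$. Write $a\Abf$ for the image of the endomorphism $a\colon \Abf\to\Abf$. This gives two exact sequences of abelian sheaves on $X$:
\[
\Abf\xrightarrow{a} a\Abf\to 0
\qquad\text{and}\qquad
0\to a\Abf\to\Abf\to\Abf/a\to 0 .
\]

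Applying the right-exact functor $\Hr^d(X,\text{--})$ of Lemma \ref{lem:right_exactness_hd} to the first sequence, the induced map $\Hr^d(X,\Abf)\to\Hr^d(X,a\Abf)$ is surjective. Applying it to the second sequence gives an exact sequence
\[
\Hr^d(X,a\Abf)\to\Hr^d(X,\Abf)\to\Hr^d(X,\Abf/a)\to 0 .
\]
One could also get this from the long exact sequence together with $\Hr^{d+1}=0$, by the Grothendieck vanishing theorem cited in that lemma.

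The composite $\Hr^d(X,\Abf)\to\Hr^d(X,a\Abf)\to\Hr^d(X,\Abf)$ is induced by the sheaf map $a\colon\Abf\to\Abf$, so by additivity of cohomology it is multiplication by $a$. Since the first arrow is surjective, the image of $\Hr^d(X,a\Abf)\to\Hr^d(X,\Abf)$ equals $a\Hr^d(X,\Abf)$. Exactness of the displayed sequence then yields an isomorphism
\[
\Hr^d(X,\Abf)/a\;\cong\;\Hr^d(X,\Abf/a).
\]

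Consequently $\Hr^d(X,\Abf)$ is $a$-divisible, i.e. $\Hr^d(X,\Abf)/a=0$, if and only if $\Hr^d(X,\Abf/a)=0$. The only point requiring any care is that the identification of the image uses surjectivity onto $\Hr^d(X,a\Abf)$. A plain long exact sequence argument would only show that the image lies in $a\Hr^d(X,\Abf)$, and this is exactly where right-exactness in top degree enters. No smoothness is needed beyond what Lemma \ref{lem:right_exactness_hd} already uses.
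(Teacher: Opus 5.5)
Your argument is correct and is essentially the paper's proof. The paper applies the right-exactness of $\Hr^d(X,\text{--})$ (Lemma \ref{lem:right_exactness_hd}) directly to $\Abf\xrightarrow{a}\Abf\to\Abf/a\to 0$ to get $\Hr^d(X,\Abf)/a\cong\Hr^d(X,\Abf/a)$, and your factorization through $a\Abf$ just unpacks why a right-exact additive functor preserves this cokernel. One small slip in your closing remark: it is backwards. Without surjectivity of $\Hr^d(X,\Abf)\to\Hr^d(X,a\Abf)$, one only knows that the image of $\Hr^d(X,a\Abf)\to\Hr^d(X,\Abf)$ \emph{contains} $a\Hr^d(X,\Abf)$, and surjectivity is what gives the reverse inclusion.
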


\begin{proof}
Consider the endomorphism $f=\cdot a:\Abf\to\Abf$ of multiplication by $a$, whose cokernel is $\Abf/a$. Lemma \ref{lem:right_exactness_hd} yields an isomorphism $\Hr^d(X,\Abf)/a\cong\Hr^d(X,\Abf/a)$. The left-hand side is vanishes if, and only if, the group $\Hr^d(X,\Abf)$ is divisible, which completes the proof.
\end{proof}

The following technical lemma, which is an elaboration on the previous result, will allow us to streamline a few arguments in the sequel.

\begin{lem}\label{lem:unique_divisibility_vanishing}
Let $X$ be a smooth $k$-scheme of dimension $d$ and let $f:\Abf\rightarrow\Bbf$ be a morphism of abelian sheaves on $X$. Suppose that the group $\Hr^d(X,\ker f)$ vanishes. The following assertions are then equivalent.
\begin{itemize}
	\item[(1)] The map $\Hr^{d-1}(X,f):\Hr^{d-1}(X,\Abf)\rightarrow\Hr^{d-1}(X,\Bbf)$ is surjective and the map $\Hr^d(X,f):\Hr^d(X,\Abf)\rightarrow\Hr^d(X,\Bbf)$ is an isomorphism.
	\item[(2)] The groups $\Hr^{d-1}(X,\coker f)$ and $\Hr^{d}(X,\coker f)$ vanish.
\end{itemize}
\vspace{-\itemsep}
\end{lem}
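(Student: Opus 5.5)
I would split $f$ into two short exact sequences of abelian sheaves on $X$. Set $\mathbf{K}=\ker f$, $\mathbf{M}=\im f$ and $\mathbf{C}=\coker f$. Then we have
\[0\to\mathbf{K}\to\Abf\to\mathbf{M}\to 0,\qquad 0\to\mathbf{M}\to\Bbf\to\mathbf{C}\to 0,\]
and $f$ factors as $\Abf\to\mathbf{M}\to\Bbf$. Grothendieck vanishing (as in the proof of Lemma \ref{lem:right_exactness_hd}) gives $\Hr^{j}(X,\text{--})=0$ for $j>d$ on Zariski sheaves over $X$, so every long exact sequence below terminates in degree $d$.

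First I would show that $\Abf\to\mathbf{M}$ behaves as well as possible in the top degrees. The long exact sequence of the first short exact sequence reads
\[\Hr^{d-1}(X,\Abf)\to\Hr^{d-1}(X,\mathbf{M})\to\Hr^d(X,\mathbf{K})\to\Hr^d(X,\Abf)\to\Hr^d(X,\mathbf{M})\to 0.\]
Since $\Hr^d(X,\mathbf{K})=0$ by hypothesis, the map $\Hr^{d-1}(X,\Abf)\to\Hr^{d-1}(X,\mathbf{M})$ is surjective and $\Hr^d(X,\Abf)\to\Hr^d(X,\mathbf{M})$ is an isomorphism. Hence assertion (1) for $f$ is equivalent to assertion (1) for the inclusion $\mathbf{M}\to\Bbf$. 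Here one uses that a composite $g\circ h$ with $h$ surjective is surjective iff $g$ is, and the same with isomorphisms.

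Next I would read off everything from the second long exact sequence:
\[\Hr^{d-1}(X,\mathbf{M})\to\Hr^{d-1}(X,\Bbf)\to\Hr^{d-1}(X,\mathbf{C})\to\Hr^d(X,\mathbf{M})\to\Hr^d(X,\Bbf)\to\Hr^d(X,\mathbf{C})\to 0.\]
\begin{itemize}
\item[(2)$\Rightarrow$(1):] If both cokernel groups vanish, exactness immediately gives surjectivity in degree $d-1$ and bijectivity in degree $d$.
\item[(1)$\Rightarrow$(2):] Surjectivity of $\Hr^d(X,\mathbf{M})\to\Hr^d(X,\Bbf)$ forces $\Hr^d(X,\mathbf{C})=0$. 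For $\Hr^{d-1}(X,\mathbf{C})$, the map $\Hr^{d-1}(X,\Bbf)\to\Hr^{d-1}(X,\mathbf{C})$ is zero because the preceding map is surjective. The map $\Hr^{d-1}(X,\mathbf{C})\to\Hr^d(X,\mathbf{M})$ is injective with image the kernel of $\Hr^d(X,\mathbf{M})\to\Hr^d(X,\Bbf)$, which is zero by injectivity. So $\Hr^{d-1}(X,\mathbf{C})=0$.
\end{itemize}

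There is no serious obstacle here; the argument is pure diagram chasing. The only points to handle with care are the reduction from $f$ to the monomorphism $\mathbf{M}\hookrightarrow\Bbf$ via the hypothesis on $\ker f$, and the use of dimension-$d$ vanishing to cap both sequences at degree $d$.
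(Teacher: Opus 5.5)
Your proposal is correct and follows essentially the same route as the paper: factor $f$ through $\im f$, use $\Hr^d(X,\ker f)=0$ and Grothendieck vanishing to see that $\Abf\to\im f$ is surjective in degree $d-1$ and bijective in degree $d$, then read off the equivalence from the long exact sequence of $0\to\im f\to\Bbf\to\coker f\to 0$. Your diagram chase simply spells out what the paper leaves as ``follows easily from this diagram.''
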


\begin{proof}
Consider the exact sequences \[0\rightarrow\ker f\rightarrow\Abf\xrightarrow{f}\im f\rightarrow 0,\;\;0\rightarrow\im f\rightarrow\Bbf\rightarrow\coker f\rightarrow 0\] of abelian sheaves, that yield the following commutative diagram:
\begin{center}
\begin{tikzcd}
& \Hr^{d-1}(X,\Abf) \arrow[r] \arrow[rd,swap,"\protect{\Hr^{d-1}(X,f)}"]       & \Hr^{d-1}(X,\im f) \arrow[r,"\partial"] \arrow[d] & \Hr^{d}(X,\ker f)=0 \\
&                                                        & \Hr^{d-1}(X,\Bbf) \arrow[d]                       & \\
&                                                        & \Hr^{d-1}(X,\coker f) \arrow[d]                     & \\
\Hr^{d}(X,\ker f)=0 \arrow[r] & \Hr^d(X,\Abf) \arrow[r] \arrow[rd,swap,"\protect{\Hr^d(X,f)}"] & \Hr^d(X,\im f) \arrow[r] \arrow[d]                 & \Hr^{d+1}(X,\ker f)=0 \\
                               &                         & \Hr^d(X,\Bbf) \arrow[d]   & \\
                               &                         & \Hr^d(X,\coker f) \arrow[d] & \\
                               &                         & \Hr^{d+1}(X,\im f)=0      &
\end{tikzcd}
\end{center} 
In this diagram, the diagonal maps are the maps $\Hr^i(X,f):\Hr^i(X,\Abf)\rightarrow\Hr^i(X,\Bbf)$ ($i\in\{d-1,d\}$) induced by $f$ in cohomology. Moreover, the group $\Hr^d(X,\ker f)$ vanishes by assumption, and the groups $\Hr^{d+1}(X,\ker f)$ and $\Hr^{d+1}(X,\im f)$ vanish since $X$ is noetherian of dimension $d$ (\cite[Théorème 3.6.5]{grothendieckQuelquesPointsDalgebre1957}). The desired conclusion now follows easily from this diagram.
\end{proof}

As before, applying this lemma to the multiplication by $a$ endomorphism yields the following corollary.

\begin{cor}\label{cor:divisibility_vanishing_multiplication_by_n}
Let $X$ be a smooth $k$-scheme of dimension $d$, let $\Abf$ be an abelian sheaf on $X_{\Zar}$ and let $a\geq 1$ be an integer; denote by $\Abf[a]$ the $a$-torsion subsheaf of $\Abf$. If $\Hr^d(X,\Abf[a])=0$, then the following assertions are equivalent.
\begin{itemize}
	\item[(1)] The group $\Hr^{d-1}(X,\Abf)$ is $a$-divisible and the group $\Hr^d(X,\Abf)$ is uniquely $a$-divisible.
	\item[(2)] The groups $\Hr^{d-1}(X,\Abf/a)$ and $\Hr^{d}(X,\Abf/a)$ vanish.
\end{itemize}
\vspace{-\itemsep}\vspace{-\itemsep}
\end{cor}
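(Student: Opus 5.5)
We apply Lemma \ref{lem:unique_divisibility_vanishing} to the endomorphism $f=\cdot a:\Abf\to\Abf$ of multiplication by $a$, as the sentence preceding the statement suggests. Here $\ker f=\Abf[a]$ and $\coker f=\Abf/a$. The standing hypothesis of Lemma \ref{lem:unique_divisibility_vanishing} is the vanishing of $\Hr^d(X,\ker f)$, which is exactly our assumption $\Hr^d(X,\Abf[a])=0$. The lemma therefore asserts the equivalence of two statements. The first is that $\Hr^{d-1}(X,\cdot a)$ is surjective and $\Hr^d(X,\cdot a)$ is an isomorphism. The second is that $\Hr^{d-1}(X,\Abf/a)$ and $\Hr^d(X,\Abf/a)$ both vanish. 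The second statement is literally assertion (2) of the corollary.

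It remains to identify the first statement with assertion (1). Cohomology is an additive functor, so $\Hr^i(X,\cdot a)$ is multiplication by $a$ on the abelian group $\Hr^i(X,\Abf)$ for every $i$. Surjectivity of multiplication by $a$ on $\Hr^{d-1}(X,\Abf)$ means that this group is $a$-divisible. Bijectivity of multiplication by $a$ on $\Hr^d(X,\Abf)$ means that this group is uniquely $a$-divisible, i.e.\ $a$-divisible with no $a$-torsion. Combining these identifications with the equivalence supplied by Lemma \ref{lem:unique_divisibility_vanishing} gives (1)$\iff$(2).

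No step presents a real obstacle. The only point to check is the additivity remark, namely that the map induced on cohomology by $\cdot a$ is $\cdot a$; this holds because $\Hr^i(X,\text{--})$ is an additive functor on $\mathrm{Ab}(X_{\Zar})$. Lemma \ref{lem:unique_divisibility_vanishing} is stated for abelian sheaves on a smooth $k$-scheme of dimension $d$, which matches the setting here.
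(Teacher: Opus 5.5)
Your proposal is correct and is exactly the paper's argument. The paper obtains the corollary by applying Lemma \ref{lem:unique_divisibility_vanishing} to multiplication by $a$, with $\ker f=\Abf[a]$ and $\coker f=\Abf/a$. It then reads surjectivity (resp.\ bijectivity) of $\cdot a$ on $\Hr^{d-1}$ (resp.\ $\Hr^{d}$) as $a$-divisibility (resp.\ unique $a$-divisibility), as you do.
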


\subsubsection*{Divisibility for $\Kbf$-cohomology groups}

We start with the following vanishing result.

\begin{lem}\label{lem:vanishing_no_compact_component}
Let $X$ be a smooth affine variety over $\Rb$ of dimension $d>1$ and let $n\geq d$. If $(*)$ (resp. $(**')$) is satisfied, then $\Hr^d(X,\Kbf_n^\Mr/2)=0$ (resp. and $\Hr^{d-1}(X,\Kbf_n^\Mr/2)=0$).
\end{lem}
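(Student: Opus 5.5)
We would relate $\Kbf_n^\Mr/2$ to the sheaves $\Ibf^n$, whose top cohomology is controlled by $X(\Rb)$ through Remark \ref{rem:relevance_cohomological_assumptions}. The natural tool is the Milnor conjecture in sheaf form: there is an isomorphism $\Kbf_n^\Mr/2\cong\Ibf^n/\Ibf^{n+1}$ of strictly $\Abb^1$-invariant sheaves, via the Pfister form map, which is compatible with residues. It yields a short exact sequence
\[0\to\Ibf^{n+1}\to\Ibf^n\to\Kbf_n^\Mr/2\to 0.\]
An alternative route is the norm residue isomorphism $\Kbf_n^\Mr/2\cong\Hbf^n(2)$ combined with the Bloch--Ogus spectral sequence and the comparison with $X_\Cb$. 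We would fall back on it only if the first route stalls.

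For the statement under $(*)$, we apply Lemma \ref{lem:right_exactness_hd}. It tells us that $\Hr^d(X,\Ibf^n)\to\Hr^d(X,\Kbf_n^\Mr/2)$ is surjective, so it suffices to show $\Hr^d(X,\Ibf^n)=0$ for $n\geq d$. By Remark \ref{rem:relevance_cohomological_assumptions} (untwisted case, $\Lc$ trivial), $\Hr^d(X,\Ibf^n)\cong\Hr^d(X(\Rb),\Zb)$. This group vanishes because $(*)$ forbids compact connected components, as noted in the same remark. This finishes the first assertion.

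For $(**')$ we use the long exact sequence
\[\Hr^{d-1}(X,\Ibf^n)\to\Hr^{d-1}(X,\Kbf_n^\Mr/2)\to\Hr^d(X,\Ibf^{n+1}).\]
The right-hand group vanishes by the previous paragraph, since $n+1\geq d$. It therefore suffices to show $\Hr^{d-1}(X,\Ibf^n)=0$. Remark \ref{rem:relevance_cohomological_assumptions} says that this group is a quotient of $\Hr^d(X(\Rb),\Zb/2)\oplus\Hr^{d-1}(X(\Rb),\Zb)$, and this is the main obstacle. Hypothesis $(**')$ only gives vanishing of $\Hr^{d-1}(X(\Rb),\Zb/2)$, not of the integral group $\Hr^{d-1}(X(\Rb),\Zb)$.

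To get around this, we would use that the identification for $n>d$ comes from \cite{jacobsonRealCohomologyPowers2017} via the signature. Concretely, $\Hr^{d-1}(X,\Ibf^{n})\cong\Hr^{d-1}(X(\Rb),2^{n}\Zb)$ for $n>d$. Combined with the compatible reductions $\Ibf^n/\Ibf^{n+1}\to\Zb/2$, this should show that $\Hr^{d-1}(X,\Kbf_n^\Mr/2)$ maps isomorphically onto a subquotient of $\Hr^{d-1}(X(\Rb),\Zb/2)$ compatible with the Bockstein sequence. It would then vanish under $(**')$.

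More precisely, the plan at this step is to compare $\Hr^{d-1}(X,\Kbf_n^\Mr/2)$ with $\Hr^{d-1}(X(\Rb),\Zb/2)$ for $n$ large. We would use the Bloch--Ogus/real cycle class comparison: for $n>d$, $\Hbf^n(2)$ cohomology in degrees $d-1$ and $d$ agrees with the cohomology of $X(\Rb)$ with $\Zb/2$ coefficients, because the contribution of $X_\Cb$ vanishes by Lemma \ref{lem:bloch_ogus_alg_closed}. For $n=d$ we would reduce to $n=d+1$ through multiplication by $\{-1\}$, which is an isomorphism in these degrees by the same comparison. Under $(**')$ both relevant real cohomology groups vanish, giving the claim.
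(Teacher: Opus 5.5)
Your argument for $(*)$ is correct but differs from the paper's. You combine the sequence $0\to\Ibf^{n+1}\to\Ibf^n\to\Kbf_n^\Mr/2\to 0$, right exactness of $\Hr^d$, and $\Hr^d(X,\Ibf^n)\cong\Hr^d(X(\Rb),\Zb)$ for $n\geq d$ from Remark \ref{rem:relevance_cohomological_assumptions}. The paper instead reads both statements off the Colliot-Th\'el\`ene--Scheiderer computations of $\Hr^i(X,\Hbf^n(2))$. For $(**')$ and $n>d$, your final plan is the paper's argument, namely $\Hr^i(X,\Hbf^n(2))\cong\Hr^i(X(\Rb),\Zb/2)$ \cite[Theorem 2.3.2 (c)]{colliot-theleneZerocyclesCohomologyReal1996}. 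However, vanishing on $X_\Cb$ only shows that $\{-1\}\cdot\colon\Hbf^n(2)\to\Hbf^{n+1}(2)$ is an isomorphism of sheaves for $n>d$, i.e.\ that the groups stabilise. Identifying the stable value with the cohomology of $X(\Rb)$ is the real-spectrum comparison, which you must cite rather than deduce from Lemma \ref{lem:bloch_ogus_alg_closed}.

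The genuine gap is the case $n=d$ in degree $d-1$. The map $\{-1\}\cdot\colon\Hr^{d-1}(X,\Kbf_d^\Mr/2)\to\Hr^{d-1}(X,\Kbf_{d+1}^\Mr/2)$ is \emph{not} an isomorphism ``by the same comparison''. By \cite[Theorem 3.2 (d), Corollary 3.3 (c)]{colliot-theleneZerocyclesCohomologyReal1996}, its source is $\Hr^d(X(\Rb),\Zb/2)\oplus\Hr^{d-1}(X(\Rb),\Zb/2)$ and its target is $\Hr^{d-1}(X(\Rb),\Zb/2)$. So it fails to be injective whenever $X(\Rb)$ has a compact connected component.

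Concretely, the Arason sequence gives an exact sequence of sheaves $\pi_*\Hbf^d(2)_{X_\Cb}\xrightarrow{N}\Hbf^d(2)\xrightarrow{\{-1\}}\Hbf^{d+1}(2)\to 0$. Set $\mathbf{N}=\operatorname{im}N$. Lemma \ref{lem:bloch_ogus_alg_closed} gives $\Hr^d(X,\mathbf{N})=0$, which yields your isomorphism in degree $d$ and surjectivity in degree $d-1$. But $\Hr^{d-1}(X,\mathbf{N})\cong\Hr^d(X,\Hbf^d(2)/\{-1\}\Hbf^{d-1}(2))$, which is a quotient of $\Hr^d(X,\Kbf_d^\Mr/2)$. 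This is exactly where the extra summand $\Hr^d(X(\Rb),\Zb/2)$ sits, and $X_\Cb$ cannot see it.

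The missing step is to use $(*)$ at this point. By your first part, $\Hr^d(X,\Kbf_d^\Mr/2)=0$, so $\Hr^{d-1}(X,\mathbf{N})=0$. Then $\Hr^{d-1}(X,\Kbf_d^\Mr/2)$ injects into $\Hr^{d-1}(X,\Kbf_{d+1}^\Mr/2)\cong\Hr^{d-1}(X(\Rb),\Zb/2)=0$. Alternatively, cite the $n=d$ computation directly, as the paper does. The same summand is also what blocks your $\Ibf^n$ route at $n=d$.
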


\begin{proof}
By \cite[Theorem 2.3.2 (c)]{colliot-theleneZerocyclesCohomologyReal1996}, if $m>d$, there is an isomorphism $\Hr^i(X,\Hbf^{m}(2))\cong\Hr^i(X(\Rb),\Zb/2)$ for every $i\geq 0$. This immediately implies the lemma in case $n>d$. Moreover, \cite[Theorem 3.2 (d), Corollary 3.3 (c)]{colliot-theleneZerocyclesCohomologyReal1996} apply to the smooth affine variety $X$ and yield isomorphisms \[\Hr^d(X,\Hbf^d(2))\simeq\Hr^d(X(\Rb),\Zb/2),\;\Hr^{d-1}(X,\Hbf^d(2))\simeq\Hr^{d}(X(\Rb),\Zb/2)\oplus\Hr^{d-1}(X(\Rb),\Zb/2)\] of abelian groups. These isomorphisms guarantee that the group $\Hr^d(X,\Hbf^d(2))\simeq\Hr^d(X,\Kbf_d^\Mr/2)$ (resp. and the group $\Hr^{d-1}(X,\Hbf^d(2))\simeq\Hr^{d-1}(X,\Kbf_d^\Mr/2)$) vanishes (resp. vanish) under the hypothesis $(*)$ (resp. $(**')$).
\end{proof}

The following lemma guarantees that one of the hypotheses of Lemma \ref{lem:unique_divisibility_vanishing} holds in cases very meaningful to the present paper.

\begin{lem}\label{lem:contraction_of_torsion_milnor_k-theory}
Let $k$ be a perfect field, let $X$ be a smooth affine $k$-scheme of dimension $d$ and let $a\neq 0$ be an integer. Then $\Hr^d(X,\Kbf_d^\Mr[a])=0$. Let further $n>d$; then the group $\Hr^d(X,\Kbf_n^\Mr[a])$ vanishes in the following two situations:
\begin{itemize}
    \item[(i)] the field $k$ is algebraically closed;
    \item[(ii)] the field $k$ is $\Rb$ and the integer $a$ is odd or $a=2$ and the hypothesis $(*)$ is satisfied.
\end{itemize}
\vspace{-\topsep}
\end{lem}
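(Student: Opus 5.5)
The plan is to exploit the fact that $\Kbf_n^\Mr[a]$ is itself strictly $\Abb^1$-invariant. It is the kernel of multiplication by $a$ on $\Kbf_n^\Mr$, and strictly $\Abb^1$-invariant sheaves form an abelian subcategory closed under kernels \cite[Corollary 6.24]{morelA1AlgebraicTopologyField2012}. Its cohomology can therefore be computed with the Rost--Schmid complex, and in degree $d$ we get a surjection
\[\bigoplus_{x\in X^{(d)}}(\Kbf_n^\Mr[a])_{-d}(\kappa(x))\to\Hr^d(X,\Kbf_n^\Mr[a]).\]
Contraction is exact, so $(\Kbf_n^\Mr[a])_{-d}=\Kbf_{n-d}^\Mr[a]$. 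For $n=d$ the left-hand side is a sum of copies of $\Zb[a]=0$, which proves the first assertion.

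For $n>d$ this crude bound fails, since for instance $\Kr_1^\Mr(\kappa(x))[a]=\mu_a(\kappa(x))$ may be nonzero. Instead I would use Suslin's Hilbert 90 for Milnor $\Kr$-theory, a consequence of the norm residue isomorphism recalled above. For a field $F$ containing a primitive $a$-th root of unity $\zeta$, it says that $\Kr_n^\Mr(F)[a]=\{\zeta\}\cdot\Kr_{n-1}^\Mr(F)$. When $\mu_a\subset k$, multiplication by $\{\zeta\}$ defines a morphism of strictly $\Abb^1$-invariant sheaves $\Kbf_{n-1}^\Mr\to\Kbf_n^\Mr[a]$. This morphism is surjective on all finitely generated field extensions of $k$, hence an epimorphism of unramified sheaves. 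It kills $a\Kbf_{n-1}^\Mr$, so it factors through an epimorphism $\Kbf_{n-1}^\Mr/a\to\Kbf_n^\Mr[a]$. By Lemma \ref{lem:right_exactness_hd}, $\Hr^d(X,\Kbf_n^\Mr[a])$ is then a quotient of $\Hr^d(X,\Kbf_{n-1}^\Mr/a)$. It therefore suffices to show that the latter vanishes when $n-1\geq d$.

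In case (i), $\mu_a\subset k$ holds automatically. Decomposing $a$ is unnecessary, because the norm residue isomorphism gives $\Kbf_{n-1}^\Mr/a\cong\Hbf^{n-1}(a)$ for arbitrary $a$. One can also reduce to prime powers and use dévissage, if Lemma \ref{lem:bloch_ogus_alg_closed} is only available for primes $\ell$. Either way, that lemma with $n-1\geq d$ gives the vanishing. In case (ii) with $a=2$, we have $\zeta=-1\in\Rb$, and $\Hr^d(X,\Kbf_{n-1}^\Mr/2)=0$ by Lemma \ref{lem:vanishing_no_compact_component} under $(*)$.

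For $a$ odd over $\Rb$, I would use a transfer argument. Let $p:X_\Cb\to X$ be the finite étale base change. The Rost--Schmid complexes carry pullback $p^*$ and pushforward $p_*$ for finite morphisms, compatible with the sheaf $\Kbf_n^\Mr[a]$ because transfers commute with multiplication by $a$. The composite $p_*p^*$ is multiplication by $2$, which is invertible on the $a$-torsion sheaf. Hence $\Hr^d(X,\Kbf_n^\Mr[a])$ is a direct summand of $\Hr^d(X_\Cb,\Kbf_n^\Mr[a])$. The latter vanishes by case (i), since $X_\Cb$ is smooth affine of dimension $d$ over $\Cb$.

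I expect the main obstacle to be making rigorous the claim that $\{\zeta\}\cdot\Kbf_{n-1}^\Mr\to\Kbf_n^\Mr[a]$ is an epimorphism of sheaves. This means checking that the field-level surjectivity from Hilbert 90 suffices for unramified sheaves, which I would justify via the Gersten injectivity into the generic stalk. A secondary point is the functoriality of transfers on the torsion subsheaf in the odd case.
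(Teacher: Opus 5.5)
Your proposal is correct and follows the paper's proof. Both use the Rost--Schmid complex for $n=d$. Both exhibit $\Hr^d(X,\Kbf_n^\Mr[a])$ as a quotient of $\Hr^d(X,\Kbf_{n-1}^\Mr/a)$ via multiplication by a root of unity, kill that group with Lemma \ref{lem:bloch_ogus_alg_closed} or Lemma \ref{lem:vanishing_no_compact_component}, and use a transfer for odd $a$. The only real difference is that you show $\cdot\{\zeta\}\colon\Kbf_{n-1}^\Mr/a\to\Kbf_n^\Mr[a]$ is an epimorphism of sheaves, using Hilbert 90 for $\Kr_*^\Mr$ over all fields. The paper needs less: the cokernel of this map must vanish after $d$ contractions on the residue fields of closed points ($k$, or $\Rb$ and $\Cb$), which follows from elementary facts about $\Kr_*^\Mr$ of these fields and suffices via the Rost--Schmid complex. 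The paper also settles $n\geq d+2$ in case (i) directly, since $\Kr^\Mr_{\geq 2}(k)$ is torsion-free.
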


\begin{proof}
The sheaf $\Kbf_d^\Mr[a]$, being the kernel of a morphism of strictly $\Abb^1$-invariant sheaves, has a Rost--Schmid complex. In degree $d$, we have $\Cr_\RS^d(X,\Kbf_d^\Mr[a])=\bigoplus_{x\in X^{(d)}}(\Kbf_d^\Mr[a])_{-d}(\kappa(x),\omega_{x/X})$. By exactness of the formation of contractions, $(\Kbf_d^\Mr[a])_{-d}=\Kbf_{d-d}^\Mr[a]=\Kbf_0^\Mr[a]=\Zb[a]=0$ as $\Zb$ is torsion free. Thus $\Cr_\RS^d(X,\Kbf_d^\Mr[a])=0$ so $\Hr^d(X,\Kbf_d^\Mr[a])=\Hr^d(\Cr_\RS(X,\Kbf_d^\Mr[a]))=0$. This completes the proof of the first assertion.

We now proceed to the second part of Lemma \ref{lem:contraction_of_torsion_milnor_k-theory}. We first recall some facts about the Milnor $\Kr$-theory of $\Rb$ and of algebraically closed fields (see for example \cite[Chapter III, Section 7, Example 7.2 (b), (c)]{weibelKbookIntroductionAlgebraic2013}). Suppose that $k$ is algebraically closed. Then $\Kr_m^\Mr(k)$ is uniquely divisible for every $m\geq 2$. Moreover, if $m\geq 2$, then $\Kr_m^\Mr(\Rb)$ is the direct sum of $\Zb/2\cdot\{-1\}^m$ (the product being taken in the graded ring $\Kr_*^\Mr(\Rb)$) and of a uniquely divisible group (generated by the symbols $\{a_1,\ldots,a_m\}$ with $a_i>0$). This shows that the $a$-torsion of $\Kr_*^\Mr(\Rb)$ coincides with its $2$-torsion if $a$ is even, that for $F\in\{\Rb,k\}$, the morphism $\cdot\{-1\}:\Kbf_{m-1}^\Mr/2\rightarrow\Kbf_m^\Mr[2]$ of sheaves on $\Sm_F$ given by multiplication by $\{-1\}$ is an epimorphism in sections over $F$, and that if $k$ is algebraically closed, then choosing a primitive $a$-th root $\xi_a$ of unit, the map $\cdot\{\xi_a\}:\Kbf_{m-1}^\Mr/a\rightarrow\Kbf_m^\Mr[a]$ of sheaves on $\Sm_k$ is surjective on sections over $k$.

Now we show that $\Hr^d(X,\Kbf_n^\Mr[a])=0$ when $n>d$ under the hypotheses of the lemma. Suppose first as in (i) that $k$ is algebraically closed. The sheaf $\Kbf_n^\Mr[a]$ is strictly $\Abb^1$-invariant so by exactness of contractions, the group $\Hr^d(X,\Kbf_n^\Mr[a])$ is a subquotient of \[\Cr_\RS^d(X,\Kbf_n^\Mr[a])=\bigoplus_{x\in X^{(d)}}\Kr_{n-d}^\Mr(\kappa(x))[a]\] where $\kappa(x)=k$ since $X$ has dimension $d$. If $n\neq d+1$, then $\Kr_{n-d}^\Mr(\kappa(x))$ is torsion-free so the group $\Cr_\RS^d(X,\Kbf_n^\Mr[a])$ vanishes and thus $\Hr^d(X,\Kbf_n^\Mr[a])=0$. Now assume that $n=d+1$. Let $\Abf$ be the cokernel of the morphism $\cdot\{\xi_a\}:\Kbf_d^\Mr/a\rightarrow\Kbf_{d+1}^\Mr[a]$; this sheaf is strictly $\Abb^1$-invariant as the cokernel of a morphism between strictly $\Abb^1$-invariant sheaves. By Lemma \ref{lem:right_exactness_hd}, we have an exact sequence \[\Hr^d(X,\Kbf_d^\Mr/a)\rightarrow\Hr^d(X,\Kbf_{d+1}^\Mr[a])\rightarrow\Hr^d(X,\Abf)\rightarrow 0\] of abelian groups. The group $\Abf(k)$ is the cokernel of the morphism $\cdot\{\xi_a\}:\Kr_0^\Mr(k)/a\to\Kr_1^\Mr(k)[a]$ so it vanishes by the argument of the previous paragraph. Therefore the group $\Hr^d(X,\Abf)$ vanishes by examination of the Rost--Schmid complex. To conclude, it suffices to prove that $\Hr^d(X,\Kbf_d^\Mr/a)=0$. To do so, it suffices to show that $\Hr^d(X,\Kbf_d^\Mr)$ is divisible and thus that it is $\ell$-divisible for every prime $\ell$ so we are reduced to showing that $\Hr^d(X,\Kbf_d^\Mr/\ell)=0$ for every prime $\ell$. The norm residue isomorphism induces an isomorphism $\Kbf_d^\Mr/\ell\simeq\Hbf^d(\ell)$ of sheaves so $\Hr^d(X,\Kbf_d^\Mr/\ell)=0$ by Lemma \ref{lem:bloch_ogus_alg_closed}. This completes the proof of item (i).

Suppose finally that $k=\Rb$. Suppose first that $a$ is odd and let us show that $\Hr^d(X,\Kbf_n^\Mr[a])=0$. We consider the étale projection map $\pi:Y=X\times_\Rb\Spec\Cb\rightarrow X$. Since $\pi$ is finite, it induces a pushforward map $\pi_*:\Hr^i(Y,\Kbf_n^\Mr/a)\rightarrow\Hr^i(X,\Kbf_n^\Mr/a)$  and the composite \[\Hr^d(X,\Kbf_n^\Mr[a])\xrightarrow{\pi^*}\Hr^d(Y,\Kbf_n^\Mr[a])\xrightarrow{\pi_*}\Hr^d(X,\Kbf_n^\Mr[a])\] is multiplication by the degree $2$ of $\pi$. Since $\Hr^i(Y,\Kbf_n^\Mr[a])=0$, it follows that multiplication by $2$ is the zero endomorphism, namely $\Hr^i(X,\Kbf_d^\Mr[a])$ is $2$-torsion. It is also $a$-torsion: since $a$ is odd, it must therefore be the zero group, as required. Suppose now that the hypothesis $(*)$ is satisfied and that $a=2$. Then as in the previous paragraph, the group $\Hr^d(X,\Kbf_n^\Mr[2])$ is a quotient of $\Hr^d(X,\Kbf_{n-1}^\Mr/2)$ and is therefore the zero group (Lemma \ref{lem:vanishing_no_compact_component}). This completes the proof.
\end{proof}

\begin{prop}\label{prop:kdivisibility_odd}
Let $X$ be a smooth affine $\Rb$-scheme of dimension $d>1$, let $n\geq d$ and let $a$ be an odd integer. Then $\Hr^{i}(X,\Kbf_n^\Mr)$ is $a$-divisible for $i\in\{d-1,d\}$.
\end{prop}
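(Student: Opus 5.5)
We reduce to odd primes and use the transfer along $\Cb/\Rb$, comparing with the algebraically closed case. Since $a$ is a product of odd primes, it suffices to show that $\Hr^i(X,\Kbf_n^\Mr)$ is $\ell$-divisible for every odd prime $\ell$ and $i\in\{d-1,d\}$. By Corollary \ref{cor:divisibility_vanishing_multiplication_by_n}, applicable because $\Hr^d(X,\Kbf_n^\Mr[\ell])=0$ by Lemma \ref{lem:contraction_of_torsion_milnor_k-theory} (first assertion if $n=d$, case (ii) with $\ell$ odd if $n>d$), it is enough to prove that $\Hr^{d-1}(X,\Kbf_n^\Mr/\ell)$ and $\Hr^d(X,\Kbf_n^\Mr/\ell)$ vanish. (For $i=d$ alone, Corollary \ref{cor:divisibility_vanishing_multiplication_by_n_non_unique} would suffice.)

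Let $\pi:Y=X\times_\Rb\Spec\Cb\to X$, which is finite étale of degree $2$, and $Y$ is a smooth affine $\Cb$-scheme of dimension $d>1$. By the norm residue isomorphism, $\Kbf_n^\Mr/\ell\simeq\Hbf^n(\ell)$, so Lemma \ref{lem:bloch_ogus_alg_closed} gives $\Hr^{d-1}(Y,\Kbf_n^\Mr/\ell)=\Hr^d(Y,\Kbf_n^\Mr/\ell)=0$.

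Transfer then finishes the argument. The sheaf $\Kbf_n^\Mr/\ell$ is strictly $\Abb^1$-invariant and carries transfers compatible with the Rost complex, so $\pi$ induces a pushforward $\pi_*$ on $\Hr^i(\text{--},\Kbf_n^\Mr/\ell)$ with $\pi_*\pi^*$ equal to multiplication by $\deg\pi=2$. This is the same mechanism already used in the proof of Lemma \ref{lem:contraction_of_torsion_milnor_k-theory}; it holds at the level of Rost complexes by the projection formula for cycle modules. Hence each group $\Hr^i(X,\Kbf_n^\Mr/\ell)$, $i\in\{d-1,d\}$, is killed by $2$ because it factors through a zero group. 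It is also killed by $\ell$, and since $\ell$ is odd it vanishes.

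The main point to check is the identity $\pi_*\pi^*=2$ on cohomology with coefficients in $\Kbf_n^\Mr/\ell$. For Milnor $\Kr$-theory this is standard: Rost's cycle-module formalism provides pullback along flat maps and pushforward along finite maps with the degree formula. Everything else is a direct assembly of the cited results.
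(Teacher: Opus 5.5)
Your proposal is correct and follows essentially the same route as the paper. It reduces to odd primes $\ell$, uses the vanishing of $\Hr^d(X,\Kbf_n^\Mr[\ell])$ from Lemma~\ref{lem:contraction_of_torsion_milnor_k-theory} to invoke Corollary~\ref{cor:divisibility_vanishing_multiplication_by_n}, kills the mod-$\ell$ groups on $Y=X\times_\Rb\Spec\Cb$ via the norm residue isomorphism and Lemma~\ref{lem:bloch_ogus_alg_closed}, and descends with the transfer identity $\pi_*\pi^*=2$. Your explicit split into the cases $n=d$ and $n>d$, and your justification of $\pi_*\pi^*=2$ at the level of Rost complexes, simply spell out what the paper leaves implicit.
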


\begin{proof}
It suffices to see that $\Hr^i(X,\Kbf_n^\Mr)$ is $\ell$-divisible for every odd prime number $\ell$. By Lemma \ref{lem:contraction_of_torsion_milnor_k-theory}, the group $\Hr^d(X,\Kbf_n^\Mr[\ell])$ vanishes. By Corollary \ref{cor:divisibility_vanishing_multiplication_by_n}, it then suffices to show that the groups $\Hr^{d-1}(X,\Kbf_n^\Mr/\ell)$ and $\Hr^d(X,\Kbf_n^\Mr/\ell)$ vanish. Setting $Y=X\times_\Rb\Spec\Cb$, the groups $\Hr^{d-1}(Y,\Kbf_n^\Mr/\ell)$ and $\Hr^d(Y,\Kbf_n^\Mr/\ell)$ vanish by Lemma \ref{lem:bloch_ogus_alg_closed} (in view of the norm residue isomorphism $\Kbf_n^\Mr/\ell\cong\Hbf^n(\ell)$). The same transfer argument as in the proof of Lemma \ref{lem:contraction_of_torsion_milnor_k-theory} then shows that the groups $\Hr^{d-1}(X,\Kbf_n^\Mr/\ell)$ and $\Hr^d(X,\Kbf_n^\Mr/\ell)$ are $2$-torsion and $\ell$-torsion so they vanish as required. 
\end{proof}

\begin{prop}\label{prop:divisibility_Hd-1Kd_real}
Let $X$ be a smooth affine $\Rb$-scheme of dimension $d$ and let $n\geq d$. Suppose that the hypothesis $(*)$ is satisfied. Then $\Hr^d(X,\Kbf_n^\Mr)$ is divisible. Moreover, under the hypothesis $(**')$, the group $\Hr^{d-1}(X,\Kbf_n^\Mr)$ is also divisible.
\end{prop}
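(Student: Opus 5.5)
Divisibility is checked one prime at a time: it suffices to show that $\Hr^d(X,\Kbf_n^\Mr)$ (resp. $\Hr^{d-1}(X,\Kbf_n^\Mr)$) is $\ell$-divisible for every prime $\ell$. For odd $\ell$ this is Proposition \ref{prop:kdivisibility_odd}, which needs neither $(*)$ nor $(**')$. So everything reduces to $\ell=2$. (If $d=1$ the statement is presumably either excluded or trivial; I treat $d>1$ as in the surrounding lemmas.)

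For the top degree under $(*)$, I would use Corollary \ref{cor:divisibility_vanishing_multiplication_by_n_non_unique} with $a=2$. It says that $\Hr^d(X,\Kbf_n^\Mr)$ is $2$-divisible if and only if $\Hr^d(X,\Kbf_n^\Mr/2)=0$, and this vanishing holds under $(*)$ by Lemma \ref{lem:vanishing_no_compact_component}. This step needs no control of torsion, since right exactness of $\Hr^d$ (Lemma \ref{lem:right_exactness_hd}) gives $\Hr^d(X,\Kbf_n^\Mr)/2\cong\Hr^d(X,\Kbf_n^\Mr/2)$ directly.

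For degree $d-1$ under $(**')$, I would apply Corollary \ref{cor:divisibility_vanishing_multiplication_by_n} with $\Abf=\Kbf_n^\Mr$ and $a=2$. Its torsion hypothesis $\Hr^d(X,\Kbf_n^\Mr[2])=0$ is supplied by Lemma \ref{lem:contraction_of_torsion_milnor_k-theory}: the case $n=d$ holds unconditionally, and the case $n>d$ is (ii) with $a=2$, using that $(**')$ implies $(*)$. Condition (2) of that corollary, the vanishing of $\Hr^{d-1}(X,\Kbf_n^\Mr/2)$ and $\Hr^d(X,\Kbf_n^\Mr/2)$, is exactly the $(**')$ part of Lemma \ref{lem:vanishing_no_compact_component}. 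The corollary then gives $2$-divisibility of $\Hr^{d-1}(X,\Kbf_n^\Mr)$, and in fact unique $2$-divisibility of $\Hr^d$ as a bonus.

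Combining the odd primes with $\ell=2$ proves both assertions. The argument is pure bookkeeping. The step I would watch most closely is the torsion input in degree $d-1$. For $n>d$ it relies on the transfer and $\{-1\}$-multiplication argument in Lemma \ref{lem:contraction_of_torsion_milnor_k-theory}(ii). That argument needs $\Hr^d(X,\Kbf_{n-1}^\Mr/2)=0$, which Lemma \ref{lem:vanishing_no_compact_component} provides because $n-1\geq d$. I would make sure this case of the lemma is invoked with the correct index.
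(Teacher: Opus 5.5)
Your proposal is correct and matches the paper's proof step for step. You reduce to $2$-divisibility via Proposition~\ref{prop:kdivisibility_odd}, then use Corollary~\ref{cor:divisibility_vanishing_multiplication_by_n_non_unique} with Lemma~\ref{lem:vanishing_no_compact_component} in degree $d$, and the torsion vanishing of Lemma~\ref{lem:contraction_of_torsion_milnor_k-theory} with Lemma~\ref{lem:unique_divisibility_vanishing} (in the form of Corollary~\ref{cor:divisibility_vanishing_multiplication_by_n}) in degree $d-1$. You are also slightly more careful than the paper on two points: for $n=d$ you invoke the unconditional first assertion of Lemma~\ref{lem:contraction_of_torsion_milnor_k-theory} where the paper cites only item (ii), and you flag the implicit $d>1$ assumption.
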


\begin{proof}
First assume that the hypothesis $(*)$ is satisfied and let us show that $\Hr^{d}(X,\Kbf_n^\Mr)$ is divisible. By Proposition \ref{prop:kdivisibility_odd}, it suffices to prove that $\Hr^d(X,\Kbf_n^\Mr)$ is $2$-divisible, and thus, by Corollary \ref{cor:divisibility_vanishing_multiplication_by_n_non_unique}, that the group $\Hr^d(X,\Kbf_n^\Mr/2)$ vanishes. This is Lemma \ref{lem:vanishing_no_compact_component}. 

We now assume that the hypothesis $(**')$ is satisfied. We have to show that the group $\Hr^{d-1}(X,\Kbf_n^\Mr)$ is $a$-divisible for every $a\neq 0$. Again by Proposition \ref{prop:kdivisibility_odd}, we see that it suffices to prove that this group is $2$-divisible. Therefore by (ii) in Lemma \ref{lem:contraction_of_torsion_milnor_k-theory} and Lemma \ref{lem:unique_divisibility_vanishing}, we are reduced to proving that $\Hr^{d-1}(X,\Kbf_n^\Mr/2)$ and $\Hr^{d}(X,\Kbf_n^\Mr/2)$ vanish. This follows from Lemma \ref{lem:vanishing_no_compact_component}.
\end{proof}

\begin{rem}
Recall the exact sequence \[0\rightarrow\Ibf^{*+1}\rightarrow\Kbf_*^\MW\rightarrow\Kbf_*^\Mr\rightarrow 0\] of abelian sheaves on $\Sm_k$ (\ref{eq:exact_sequence_mw_ktheory}). It induces an exact sequence \[\Hr^d(X,\Ibf^{d+2})\rightarrow\Hr^d(X,\Kbf_{d+1}^\MW)\rightarrow\Hr^d(X,\Kbf_{d+1}^\Mr)\rightarrow 0\] of abelian groups if $X\in\Sm_k$ has dimension $d$. Suppose now that $k=\Rb$ and that $X$ is affine and the group $\Hr^d(X(\Rb),\Zb/2)$ vanishes. Then by Jacobson's theorem \cite{jacobsonRealCohomologyPowers2017}, we have $\Hr^d(X,\Ibf^{d+2})\cong\Hr^d(X(\Rb),\Zb)$ and the latter vanishes since $X(\Rb)$ has no compact connected component. Thus the map $\Hr^d(X,\Kbf_{d+1}^\MW)\rightarrow\Hr^d(X,\Kbf_{d+1}^\Mr)$ is an isomorphism. We conclude from Proposition \ref{prop:divisibility_Hd-1Kd_real} that $\Hr^d(X,\Kbf_{d+1}^\MW)$ is divisible. Now by \cite[Theorem 4.9]{faselRemarksOrbitSets2010a} (at least if $d\geq 3$), the group $\Hr^d(X,\Kbf_{d+1}^\MW)$ is isomorphic to the group $\mathrm{WMS}_{d+1}(X)$ housing the universal weak Mennicke symbol of $X$ (we refer to \emph{loc. cit} for the definition of this group). Thus the group $\mathrm{WMS}_{d+1}(X)$ group is also divisible. In fact, given a finite type affine $\Rb$-scheme $X$, the divisibility of $\mathrm{WMS}_{d+1}(X)$ was known in the following situations:
\begin{itemize}
    \item the real locus of $X$ is empty \cite[Corollary 4.3]{banerjeeZeroCyclesMennicke2025};
    \item the variety $X$ is smooth and its real locus $X(\Rb)$ is nonempty, orientable and has no compact connected component \cite[Corollary 4.5, Theorem 4.6]{dasOrbitSpacesUnimodular2018}
\end{itemize}
The above argument gives a unified argument for these results (restricting \cite[Corollary 4.3]{banerjeeZeroCyclesMennicke2025} to smooth schemes).
\end{rem}

\subsubsection*{The top cohomology of the relevant homotopy sheaf} 

\begin{prop}\label{prop:vanishing_hd_GW_d+1_d}
Let $d\geq 2$, let $X$ be a smooth affine $\Rb$-scheme of dimension $d$ satisfying the hypothesis $(*)$ and let $\Lc$ be a line bundle on $X$. Then $\Hr^d(X,\GW_{d+1}^d(\Lc))$ vanishes.
\end{prop}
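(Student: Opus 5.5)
We use the Karoubi periodicity sequence of sheaves \[\Kbf_{d+1}\xrightarrow{H}\GW_{d+1}^d\xrightarrow{\eta}\GW_{d}^{d-1}\to\Kbf_{d}\] (twisted by $\Lc$; all maps are $\Gm$-equivariant since they come from contractions of the untwisted sequence one degree up). By Lemma \ref{lem:right_exactness_hd}, $\Hr^d(X,\text{--})$ is right exact. Writing $\Abf=\im(H)$ and $\Bbf=\im(\eta)=\ker(\GW_d^{d-1}\to\Kbf_d)$, we get surjections $\Hr^d(X,\Kbf_{d+1})\to\Hr^d(X,\Abf)$, and an exact sequence $\Hr^d(X,\Abf)\to\Hr^d(X,\GW^d_{d+1}(\Lc))\to\Hr^d(X,\Bbf(\Lc))\to 0$. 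So it suffices to show that both outer groups vanish.

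For the left part, compute through the Rost--Schmid complex: the $d$-fold contraction of $\Kbf_{d+1}$ is $\Kbf_1$, so $\Hr^d(X,\Kbf_{d+1})$ is the cokernel of $\bigoplus_{x\in X^{(d-1)}}\Kr_2(\kappa(x))\to\bigoplus_{x\in X^{(d)}}\Kr_1(\kappa(x))$, and it agrees with $\Hr^d(X,\Kbf^\Mr_{d+1})$ because $\Kr_1=\Kr_1^\Mr$ and $\Kr_2=\Kr_2^\Mr$ for fields. By Proposition \ref{prop:divisibility_Hd-1Kd_real} this group is divisible under $(*)$. On the other hand, the composite $\Kbf_{d+1}\xrightarrow{H}\GW_{d+1}^d\xrightarrow{f}\Kbf_{d+1}$ is $1\pm$ (duality), so its image on $\Hr^d$ is controlled by $2$. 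The key claim to prove is that the image of $H$ on $\Hr^d$ is $2$-torsion, or more generally killed by a fixed integer. Combined with divisibility of the source, this forces the image to vanish. I would verify this on contractions: $(\GW_{d+1}^d)_{-d}=\GW_1^0$ (up to twist), and for fields $\Kr_1(F)\to\GWr_1^0(F)$ is hyperbolic with $\eta\circ H=0$. The hyperbolic image in $\GWr_1^0(F)$ is killed by $2$, at least modulo $\Kr_1$-classes coming from $f$. This is the point to check carefully.

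For the right part, $\Bbf_{-d}$ is the kernel of $\GW_0^{-1}\to\Kbf_0$ (i.e. $\GWr^{-1}_0=\GWr^3_0$, symplectic-type), twisted appropriately. Concretely, after $d$ contractions the $\eta$-image sits inside $\mathbf W$-type terms. So $\Hr^d(X,\Bbf(\Lc))$ should be a quotient of $\Hr^d(X,\Ibf^{j}(\Lc))$ for some $j$. Using $\eta$-periodicity, I would identify the $\eta$-part of $\GW_{d+1}^d$ in top degree with a quotient of $\Hr^d(X,\Ibf^{n}(\Lc))$ for large $n\geq d$. By Remark \ref{rem:relevance_cohomological_assumptions} (Jacobson), this group is $\Hr^d(X(\Rb),\Zb(L))$, which vanishes under $(*)$. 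To get a large power of $\Ibf$, I would use the fact that $\Hr^d(X,\mathbf W)$ and $\Hr^d(X,\Ibf^j)$ are all isomorphic to $\Hr^d(X,\Ibf^n)$ for $j\leq n$ on the real side. This holds because $\Ibf^j/\Ibf^{j+1}\cong\Kbf^\Mr_j/2$ and $\Hr^d(X,\Kbf_j^\Mr/2)$ vanishes for $j\geq d$ by Lemma \ref{lem:vanishing_no_compact_component}; for $j<d$ it vanishes for dimension reasons in the Rost--Schmid complex after $d$ contractions.

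The main obstacle is the precise identification of the top-degree contribution of $\GW^d_{d+1}$: pinning down $(\GW_{d+1}^d)_{-d}$ and the torsion of the hyperbolic image. If the $2$-torsion claim is delicate, the fallback is to compare directly with $\pi$-values on points. Each $x\in X^{(d)}$ has residue field $\Rb$ or $\Cb$. Over $\Cb$, $\GWr_1^0(\Cb)$ is divisible modulo a finite group, and the boundary relations kill it using divisibility of $\Hr^d(X,\Kbf_{d+1}^\Mr)$. Over $\Rb$-points, the contribution is exactly the $\Zb(L)$-cohomology term that vanishes since $X(\Rb)$ has no compact component.
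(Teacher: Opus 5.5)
Your skeleton is the paper's: use the Karoubi sequence to split $\GW_{d+1}^d(\Lc)$ into the image $\Abf(\Lc)$ of $H$ and the image $\Bbf(\Lc)$ of $\eta$, then use right exactness of $\Hr^d(X,\text{--})$. Your treatment of $\Abf(\Lc)$ works and is a mild variant of the paper's; the paper instead passes to the cokernel of $f\circ H'\colon\Kbf_{d+1}\to\Kbf_{d+1}$ and identifies its top cohomology with $\Hr^d(X,\Kbf^\Mr_{d+1}/2)$. In your version, $\Hr^d(X,\Abf(\Lc))$ is a quotient of the divisible group $\Hr^d(X,\Kbf_{d+1})\cong\Hr^d(X,\Kbf^\Mr_{d+1})$. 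It is also $2$-torsion, because its degree-$d$ Rost--Schmid term only involves $\Abf_{-d}=\im(H\colon\Kbf_1\to\GW_1^0)$, which on a field is $F^\times/F^{\times2}$ (the spinor norms of $\diag(a,a^{-1})$). Your appeal to $f\circ H=1\pm\text{duality}$ does not bound $\im H$. What does bound it is either this field computation or the identity $2H=H\circ f\circ H'=0$ after $\geq d-1$ contractions, where $H'\colon\Kbf_{d+1}\to\GW^{d-1}_{d+1}$ satisfies $(f\circ H')_{-m}=2$ and $H\circ f=0$.

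The gap is in the $\Bbf(\Lc)$ part. You claim that $\Hr^d(X,\Bbf(\Lc))$ is a quotient of some $\Hr^d(X,\Ibf^j(\Lc))$ ``by $\eta$-periodicity'', but no morphism $\Ibf^j(\Lc)\to\Bbf(\Lc)$ is produced, and the heuristic behind the claim is off. The sheaves $\GW^n_i$ are Witt-type only for $i<0$, whereas $\Bbf_{-d}=\GW^{-1}_0=\GW^3_0$ is neither Witt-type nor symplectic: on fields $\GWr^3_0(F)\cong\Zb/2$, hit by $H$ from $\Kr_0(F)=\Zb$. The missing step is to compute the two contractions that govern $\Hr^d$. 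From the split exact sequence $0\to\Kbf_1/2\xrightarrow{H}\GW^0_1\xrightarrow{\eta}\GW^3_0\to0$ of \cite[Lemma 2.4]{faselStablyFreeModules2012}, the hyperbolic map $H\colon\Kbf_d\to\GW^{d-1}_d$ induces isomorphisms $\Kbf_1/2\cong\Bbf_{1-d}=\ker(\GW^0_1\to\Kbf_1)$ and $\Kbf_0/2\cong\Bbf_{-d}$, compatibly with residues. Hence the Rost--Schmid complexes agree in degrees $d-1,d$, and $\Hr^d(X,\Bbf(\Lc))\cong\Hr^d(X,\Kbf^\Mr_d/2)$. Only at this point does your guess become true and usable: $\Hr^d(X,\Kbf_d^\Mr/2)$ is a quotient of $\Hr^d(X,\Ibf^d(\Lc))\cong\Hr^d(X(\Rb),\Zb(L))=0$. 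The paper instead uses divisibility of $\Hr^d(X,\Kbf^\Mr_d)$, and Lemma \ref{lem:vanishing_no_compact_component} also gives the vanishing directly. Two smaller points: your claim that the groups $\Hr^d(X,\Ibf^j)$ are all isomorphic is overstated, since vanishing of $\Hr^d(X,\Kbf^\Mr_j/2)$ only gives surjections $\Hr^d(X,\Ibf^{j+1})\to\Hr^d(X,\Ibf^j)$ (which would suffice). And the pointwise fallback over $\Rb$- and $\Cb$-points is not an argument.
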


\begin{proof}
Recall the Karoubi periodicity exact sequence: 
\begin{equation}\label{eq:karoubi_periodicity_exact_sequence}
\GW_{d+1}^{d-1}\xrightarrow{f}\Kbf_{d+1}\xrightarrow{H}\GW_{d+1}^d\xrightarrow{\eta}\GW_{d}^{d-1}\xrightarrow{f}\Kbf_d    
\end{equation}
of sheaves on $\Sm_k$, inducing an exact sequence of sheaves on $X$. This sequence can be twisted by $\Lc$ and remains exact. Setting $\mathbf{A}(\Lc):=\coker(\GW_{d+1}^{d-1}(\Lc)\xrightarrow{f}\Kbf_{d+1})$ and $\mathbf{B}(\Lc):=\ker(\GW_{d}^{d-1}(\Lc)\xrightarrow{f}\Kbf_{d})$, we obtain an exact sequence
\[
0\to \mathbf{A}(\Lc)\to \GW_{d+1}^d\to \mathbf{B}(\Lc)\to 0.
\]
It suffices thus to show that $\Hr^d(X,\mathbf{A}(\Lc))=\Hr^d(X,\mathbf{B}(\Lc))=0$ to conclude. For the former, consider the following commutative diagram with exact rows
\[
\xymatrix{\GW_{d+1}^{d-1}(\Lc)\ar[r]^-f & \Kbf_{d+1}\ar[r] & \mathbf{A}(\Lc)\ar[r] & 0 \\
 \Kbf_{d+1}\ar[r]_{f\circ H}\ar[u]^-H &  \Kbf_{d+1}\ar[r]\ar@{=}[u] & \mathbf{A}'(\Lc)\ar[r]\ar@{-->}[u] & 0}
\]
in which the induced morphism $\mathbf{A}'(\Lc)\to \mathbf{A}(\Lc)$ is onto. We are reduced to show that $\Hr^d(X,\mathbf{A}'(\Lc))=0$. To this end, we first observe that the action of $\mathbb{G}_m$ on $ \mathbf{A}'$ is trivial, being the cokernel of a morphism of sheaves on which $\mathbb{G}_m$ acts trivially. Now, we observe that $(\Kbf_{d+1})_{-m}$ coincides with the unramified Milnor $K$-theory sheaf if $m\geq d-1$ and we may use \cite[Lemma 4.2]{asokCohomologicalClassificationVector2014} to deduce that the composite $(f\circ H)_{-m}=f_{-m}\circ H_{-m}$ is actually the multiplication by $2$ morphism. Consequently 
\[
\Hr^d(X,\mathbf{A}'(\Lc))=\Hr^d(X,\Kbf_{d+1}/2)=\Hr^d(X,\Kbf_{d+1}^\Mr/2)=0
\]
where the second-to-last equality follows from the agreement of Quillen and Milnor $\Kr$-theory for fields in degree $\leq 2$, and the last equality from Proposition \ref{prop:divisibility_Hd-1Kd_real}.

We finally prove that $\Hr^d(X,\mathbf{B}(\Lc))=0$. By \cite[Lemma 2.4]{faselStablyFreeModules2012}, the previous exact sequence (contracted $d-1$ times) induces a split exact sequence 
\[
0\rightarrow\Kbf_1/2\xrightarrow{H}\GW_{1}^0\xrightarrow{\eta}\GW_0^{-1}=\GW^3\rightarrow 0.
\] 
Contracting once again yields an isomorphism $\Kbf_0/2\xrightarrow{H}\GW_{0}^3$, and we deduce that the hyperbolic map $H\colon \Kbf_d\to \GW_{d}^{d-1}$ induces isomorphisms $\Kbf_1/2\to \mathbf{B}(\Lc)_{1-d}$ and $\Kbf_0/2\to \mathbf{B}(\Lc)_{-d}$ (compatible with contractions). Thus
\[
\Hr^d(X,\mathbf{B}(\Lc))\simeq \Hr^d(X,\Kbf_d/2)=0
\]
by Proposition \ref{prop:divisibility_Hd-1Kd_real}.
\end{proof}

\begin{prop}\label{prop:vanishing_cohomology_homotopy_sheaf}
Let $d\geq 2$ and let $X$ be a smooth real affine variety of dimension $d$; assume that $\Hr^d(X(\Rb),\Zb/2)=0$. Then $\Hr^d(X,\pi_d^{\Abb^1}(\Abb^d\smallsetminus \{0\})(\Lc))=0$ for every line bundle $\Lc$ on $X$.  
\end{prop}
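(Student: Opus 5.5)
The plan is to unwind $\pi_d^{\Abb^1}(\Abb^d\smallsetminus\{0\})(\Lc)$ using the descriptions in Theorem \ref{thm:homotopy_sheaves_of_motivic_spheres}. All the exact sequences there are $\Gm$-equivariant, so they stay exact after twisting by $\Lc$. The main tool is Lemma \ref{lem:right_exactness_hd}: since $\Hr^d(X,\text{--})$ is right exact, for any epimorphism or short exact sequence it suffices to kill $\Hr^d$ of the outer pieces. The pieces will be handled by three earlier results:
\begin{itemize}
\item Proposition \ref{prop:vanishing_hd_GW_d+1_d} for the Grothendieck--Witt part;
\item Jacobson's identification $\Hr^d(X,\Ibf^n(\Lc))\cong\Hr^d(X(\Rb),\Zb(L))$ for $n\geq d$, which vanishes under $(*)$ by Remark \ref{rem:relevance_cohomological_assumptions}, for the $\Ibf$-parts;
\item divisibility results for the torsion Milnor $\Kr$-theory parts.
\end{itemize}
I will split into the cases $d=2$, $d=3$ and $d\geq 4$.

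\emph{Torsion Milnor part.} I first show that $\Hr^d(X,\Kbf_m^\Mr/N(\Lc))=0$ for $m\geq d$ and $N\in\{12,24\}$. The twist should be harmless: $\Gm$ acts on these Milnor quotients through $\langle u\rangle$, which acts trivially on Milnor $\Kr$-theory, so the twisted sheaf is untwisted. Write $N=2^r\cdot 3$. Right exactness together with the filtration $0\to\Kbf_m^\Mr/2^{j}\to\Kbf_m^\Mr/2^{j+1}\to\Kbf_m^\Mr/2\to 0$, induced by multiplication by $2$ (exact on the left up to a quotient, which is enough for $\Hr^d$), reduces the claim to $\Hr^d(X,\Kbf_m^\Mr/2)=0$ and $\Hr^d(X,\Kbf_m^\Mr/3)=0$. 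The first is Lemma \ref{lem:vanishing_no_compact_component} under $(*)$. The second follows from Proposition \ref{prop:kdivisibility_odd} combined with Corollary \ref{cor:divisibility_vanishing_multiplication_by_n_non_unique}. Any quotient of such a sheaf, for instance $\Sbf_4'$ or $\Sbf_5$, then also has vanishing $\Hr^d$.

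\emph{Cases $d=2$ and $d=3$.} For $d=2$, the sequence $\Ibf^5(\Lc)\to\Tbf_4'(\Lc)\to\Sbf_4'(\Lc)\to 0$ gives $\Hr^2(X,\Tbf_4'(\Lc))=0$, since $\Hr^2(X,\Ibf^5(\Lc))\cong\Hr^2(X(\Rb),\Zb(L))=0$. Then $0\to\Tbf_4'\to\pi_2^{\Abb^1}\to\GW_3^2\to 0$ together with Proposition \ref{prop:vanishing_hd_GW_d+1_d} gives the result. The case $d=3$ is identical, using $\Ibf^6$, $\Sbf_5$, $\Tbf_5\twoheadrightarrow\Fbf_5$ and $\GW_4^3$.

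\emph{Case $d\geq 4$.} Let $\mathbf{J}$ be the image of $\pi_d^{\Abb^1}(\Abb^d\smallsetminus\{0\})\to\GW_{d+1}^d$. By right exactness, $\Hr^d$ of the twisted homotopy sheaf is an extension of a quotient of $\Hr^d(X,\mathbf{J}(\Lc))$ by a quotient of $\Hr^d(X,\Kbf_{d+2}^\Mr/24)$. The latter vanishes by the torsion step. For $\mathbf{J}$, use $0\to\mathbf{J}\to\GW_{d+1}^d\to\Qbf\to 0$, which yields the exact sequence $\Hr^{d-1}(X,\Qbf(\Lc))\to\Hr^d(X,\mathbf{J}(\Lc))\to\Hr^d(X,\GW_{d+1}^d(\Lc))$. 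Its right term vanishes by Proposition \ref{prop:vanishing_hd_GW_d+1_d}. For the left term, $\Qbf$ is strictly $\Abb^1$-invariant, being a cokernel in the abelian category $\Ab^{\Abb^1}(k)$, so its twisted Rost--Schmid complex in degree $c$ involves $\Qbf_{-c}$. Since $\Qbf_{-(d-3)}=0$, all further contractions vanish too, so the complex is zero in degrees $\geq d-3$ and in particular $\Hr^{d-1}(X,\Qbf(\Lc))=0$. Hence $\Hr^d(X,\mathbf{J}(\Lc))=0$, which concludes this case.

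The main obstacle I expect is bookkeeping the twists, namely checking that the $\Gm$-actions on the torsion Milnor pieces and on the $\Ibf$-pieces are the expected ones. In particular, Jacobson's isomorphism has to be applied with the correct twisted coefficients $\Zb(L)$. I will rely on Remark \ref{rem:relevance_cohomological_assumptions}, which covers all twists $\Lc$.
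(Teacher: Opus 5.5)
Your proof is correct and follows the paper's argument essentially step for step: the same case split $d=2$, $d=3$, $d\geq 4$, right exactness of $\Hr^d$, the Rost--Schmid vanishing of $\Hr^{d-1}(X,\Qbf(\Lc))$, Proposition \ref{prop:vanishing_hd_GW_d+1_d} and Jacobson's theorem for the $\Ibf$-pieces. The only cosmetic difference is in the torsion Milnor part. You reduce to the mod $2$ and mod $3$ cases by hand, whereas the paper cites the divisibility of $\Hr^d(X,\Kbf_n^\Mr)$ from Proposition \ref{prop:divisibility_Hd-1Kd_real} together with Corollary \ref{cor:divisibility_vanishing_multiplication_by_n_non_unique}, which packages exactly the same two inputs.
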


\begin{proof}
Suppose first that $d\geq 4$. Recall from Theorem \ref{thm:homotopy_sheaves_of_motivic_spheres} the $\mathbb{G}_m$-equivariant exact sequence \[0\to\Kbf_{d+2}^\Mr/24\to\pi_d^{\Abb^1}(\Abb^d\smallsetminus \{0\})\to\GW_{d+1}^d\] of sheaves. Moreover, denoting by $\Qbf$ the cokernel of the morphism $\pi_d^{\Abb^1}(\Abb^d\smallsetminus \{0\})\to\GW_{d+1}^d$, the contraction $\Qbf_{(d-3)}$ is the zero sheaf. Now letting $\Abf$ be the image of this morphism, the sheaves $\Abf$ and $\Qbf$ are strictly $\Abb^1$-invariant so their cohomology can be computed using Rost--Schmid complexes. The complex $\Cr_\RS(X,\Qbf(\Lc))$ then vanishes in degree $\geq d-3$ by exactness of twists so in particular, one has $\Hr^j(X,\Qbf(\Lc))=0$ if $j=d-1,d$. The cohomology long exact sequence associated with the inclusion $\Abf(\Lc)\subseteq\GW_{d+1}^d(\Lc)$ then shows that it induces an isomorphism $\Hr^d(X,\Abf(\Lc))\to\Hr^d(X,\GW_{d+1}^d(\Lc))$. By Lemma \ref{lem:right_exactness_hd}, we then have an exact sequence \[\Hr^d(X,\Kbf_{d+2}^{\mathrm{M}}/24)\rightarrow\Hr^d(X,\pi_d^{\Abb^1}(\Abb^d\smallsetminus \{0\})(\Lc))\rightarrow\Hr^d(X,\Abf(\Lc))\cong\Hr^d(X,\GW_{d+1}^d(\Lc))\rightarrow 0\] of abelian groups. It now suffices to prove that the groups $\Hr^d(X,\Kbf_{d+2}^{\mathrm{M}}/24)$ and $\Hr^d(X,\GW_{d+1}^d(\Lc))$ vanish. In the second case, this was checked in Proposition \ref{prop:vanishing_hd_GW_d+1_d}. Moreover, the group $\Hr^d(X,\Kbf_{d+2}^\Mr/24)$ is the zero group by Proposition \ref{prop:divisibility_Hd-1Kd_real} (whose assumptions are satisfied by hypothesis), and Corollary \ref{cor:divisibility_vanishing_multiplication_by_n_non_unique}.

We now treat the cases $d\leq 3$. Suppose first that $d=3$. Recall from Theorem \ref{thm:homotopy_sheaves_of_motivic_spheres} the exact sequence of describing $\pi_3^{\Abb^1}(\Abb^3\smallsetminus \{0\})$. It is $\mathbb{G}_m$-equivariant hence induces an exact sequence
\begin{equation}\label{eq:description_pi_2}
0\rightarrow\Fbf_5(\Lc)\rightarrow\pi_3^{\Abb^1}(\Abb^3\smallsetminus \{0\})(\Lc)\rightarrow\GW_4^3(\Lc)\rightarrow 0 
\end{equation}
of sheaves on $X$. In this sequence, the  sheaf $\Fbf_5(\Lc)$ is a quotient of a sheaf $\Tbf_5(\Lc)$ fitting in an exact sequence
\begin{equation}\label{eq:exact_sequence_tbf4}
\Ibf^{6}(\Lc)\rightarrow\Tbf_{5}(\Lc)\rightarrow\Sbf_5\rightarrow 0
\end{equation}
and $\Sbf_5$ is a quotient of the sheaf $\Kbf_5^\Mr/24$ (actually, $\Sbf_5$ is isomorphic to $\Kbf_5^\Mr/24$ by \cite{asokMotivicspheres20}, but we don't need this fact here). By Proposition \ref{prop:divisibility_Hd-1Kd_real}, the group $\Hr^3(X,\Kbf_5^\Mr)$ is divisible hence Corollary \ref{cor:divisibility_vanishing_multiplication_by_n} yields $\Hr^3(X,\Kbf_5^\Mr/24)=0$. By Lemma \ref{lem:right_exactness_hd}, the group $\Hr^3(X,\Sbf_5)$ is a quotient of $\Hr^3(X,\Kbf_5^\Mr/24)$ and therefore vanishes. By \cite[Corollary 8.11]{jacobsonRealCohomologyPowers2017}, the group $\Hr^3(X,\Ibf^5(\Lc))$ is isomorphic to the group $\Hr^3(X(\Rb),\Zb(L))$ where $L=\Lc(\Rb)$. One has $\Hr^3(X(\Rb),\Zb(L))=0$ because of the hypothesis $(*)$ thus $\Hr^3(X,\Ibf^5(\Lc))=0$. Then the exact sequence (\ref{eq:exact_sequence_tbf4}) and Lemma \ref{lem:right_exactness_hd} imply that there is an exact sequence \[0=\Hr^3(X,\Ibf^5(\Lc))\rightarrow\Hr^3(X,\Tbf_5(\Lc))\rightarrow\Hr^3(X,\Sbf_5)=0\] hence $\Hr^3(X,\Tbf_5(\Lc))=0$. Now again the group $\Hr^3(X,\Fbf_5(\Lc))$ is a quotient of $\Hr^3(X,\Tbf_5(\Lc))$ according toLemma \ref{lem:right_exactness_hd}: consequently, we have $\Hr^3(X,\Fbf_5(\Lc))$. Finally, the exact sequence (\ref{eq:description_pi_2}) provides an exact sequence \[\Hr^3(X,\Fbf_5(\Lc))\rightarrow\Hr^3(X,\pi_3^{\Abb^1}(\Abb^3\smallsetminus \{0\})(\Lc))\rightarrow\Hr^3(X,\GW_4^3(\Lc))\rightarrow 0\] where the groups $\Hr^3(X,\Fbf_5(\Lc))$ and $\Hr^3(X,\GW_4^3(\Lc))$ vanish by the previous arguments and by Proposition \ref{prop:vanishing_hd_GW_d+1_d} respectively. Thus $\Hr^3(X,\pi_3^{\Abb^1}(\Abb^3\smallsetminus \{0\})(\Lc))$, which completes the proof in the case where $d=3$. Finally, if $d=2$, then the argument is the exact same, using the description of $\pi_2^{\Abb^1}(\Abb^2\smallsetminus \{0\})$ from Theorem \ref{thm:homotopy_sheaves_of_motivic_spheres}, and is omitted (see for instance \cite[Proposition 3.11]{lerbetCohomologicalClassificationVector2026}).
\end{proof}

Let us mention a useful consequence of this proposition. Let $d\geq 3$. Recall that $\Abb^d\smallsetminus\{0\}$ is $\Abb^1$-$(d-2)$-connected by \cite[Theorem 6.38]{morelA1AlgebraicTopologyField2012}, so the first nontrivial stage of the Postnikov tower of $\Abb^d\smallsetminus\{0\}$ (that is, the Moore--Postnikov tower of the morphism $\Abb^d\smallsetminus\{0\}\to *$) is 
\[
(\Abb^d\smallsetminus\{0\})^{(d-1)}=\Kr(\pi_{d-1}^{\Abb^1}(\Abb^d\smallsetminus\{0\}),d-1)
\] 
where $\pi_{d-1}^{\Abb^1}(\Abb^d\smallsetminus\{0\})=\Kbf_d^\MW$ (\cite[Theorem 6.40]{morelA1AlgebraicTopologyField2012}). This yields a $\kappa$-invariant $\kappa:\Abb^d\smallsetminus\{0\}\to\Kr(\Kbf_d^\MW,d-1)$, hence a map \[\kappa_*=\zeta_{d-1}(X):[X_+,\Abb^d\smallsetminus\{0\}]_{\Abb^1}\to\Hr^{d-1}(X,\Kbf_d^\MW)\] for every smooth scheme $X$, functorial in $X$ and surjective by Lemma \ref{lem:easy_moore--postnikov} if in addition $X$ has dimension $\leq d$. In fact:

\begin{lem}\label{lem:wms_bijective}
Let $X$ be a smooth $\Rb$-scheme of dimension $d$ satisfying $(*)$. Then $\zeta_{d-1}(X)$ is bijective.
\end{lem}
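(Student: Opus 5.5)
We will analyse $\zeta_{d-1}(X)$ through the Postnikov tower of $\Abb^d\smallsetminus\{0\}$, that is, the Moore--Postnikov tower of $\Abb^d\smallsetminus\{0\}\to *$. The first nontrivial stage is $E_{d-1}=\Kr(\Kbf_d^\MW,d-1)$, and $\zeta_{d-1}(X)$ is the map $[X_+,\Abb^d\smallsetminus\{0\}]_{\Abb^1}\to[X_+,E_{d-1}]_{\Abb^1}=\Hr^{d-1}(X,\Kbf_d^\MW)$. Here the base is a point, so $\Gbf$ is trivial and no twists occur. Surjectivity is already known from Lemma \ref{lem:easy_moore--postnikov}, so only injectivity needs proof.

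First we reduce the problem to the stage $E_d$. The fibre of $\Abb^d\smallsetminus\{0\}\to E_d$ is $\Abb^1$-$d$-connected. Since $X$ has dimension $d$, and hence $\Abb^1$-cohomological dimension $\leq d$, Lemma \ref{lem:easy_moore--postnikov} (with $i\geq d$) shows that $[X_+,\Abb^d\smallsetminus\{0\}]_{\Abb^1}\to[X_+,E_d]_{\Abb^1}$ is bijective. It therefore suffices to show that $(q_d)_*:[X_+,E_d]_{\Abb^1}\to[X_+,E_{d-1}]_{\Abb^1}$ is injective.

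Next we apply the exact sequence following Theorem \ref{theo:moore--postnikov} to $q_d$. Fix a map $f_{d-1}:X_+\to E_{d-1}$. Its lifts along $q_d$, up to homotopy, form the set of orbits of $\widetilde{\Hr}^{d}(X_+,\pi_d^{\Abb^1}(\Abb^d\smallsetminus\{0\}))=\Hr^d(X,\pi_d^{\Abb^1}(\Abb^d\smallsetminus\{0\}))$ under the action of $\pi_0(\Omega_{f_{d-1}}\Map(X_+,E_{d-1}))$. Proposition \ref{prop:vanishing_cohomology_homotopy_sheaf}, applied with $\Lc$ trivial (its hypothesis $(*)$ holds by assumption, and $d\geq 2$), shows that this cohomology group vanishes. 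Hence the fibre of $(q_d)_*$ over every class has at most one element, and $(q_d)_*$ is injective.

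One point needs care: the exact sequence only describes the fibre over a point as an orbit set, and that description must hold over every $f_{d-1}$, not just the base point. This is exactly what the construction after Theorem \ref{theo:moore--postnikov} provides, since it identifies $\Map_{/E_{d-1}}(X,E_d)$ with sections of $\Kr_X(\Abf,d)$ for an arbitrary $f_{d-1}$; so this step is not a real obstacle. Alternatively, because $E_{d-1}$ is an Eilenberg--Mac Lane space and hence an abelian group object, the translation argument in the subsection on fibre sequences applies directly.

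The whole difficulty is thus contained in Proposition \ref{prop:vanishing_cohomology_homotopy_sheaf}. The one remaining thing to check is that the paper's hypotheses give $d\geq 2$. This lemma is used for $d\geq 3$ in the surrounding text, and the tower description of $E_{d-1}$ as $\Kr(\Kbf_d^\MW,d-1)$ requires $d\geq 3$ (or at least $d\geq 2$, via Morel's computation), so we will assume $d\geq 3$ as in the preceding paragraph.
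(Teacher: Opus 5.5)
Your argument is correct and is essentially the paper's proof: you reduce via Lemma \ref{lem:easy_moore--postnikov} to injectivity of $[X_+,(\Abb^d\smallsetminus\{0\})^{(d)}]_{\Abb^1}\to\Hr^{d-1}(X,\Kbf_d^\MW)$, then observe that each fibre is a quotient of $\Hr^d(X,\pi_d^{\Abb^1}(\Abb^d\smallsetminus\{0\}))$, which vanishes by Proposition \ref{prop:vanishing_cohomology_homotopy_sheaf} with $\Lc$ trivial. As in the paper, this tacitly uses $d\geq 3$ and that $X$ is affine (the latter is needed to invoke that proposition), which is the situation in which the lemma is applied.
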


\begin{proof}
The morphism $\kappa$ factors as \[\Abb^d\smallsetminus\{0\}\to(\Abb^d\smallsetminus\{0\})^{(d)}\xrightarrow{q}\Kr(\pi_{d-1}^{\Abb^1}(\Abb^d\smallsetminus\{0\}),d-1)\] where the first morphism induces a bijection $[X_+,\Abb^d\smallsetminus\{0\}]_{\Abb^1}\xrightarrow{\sim}[X_+,(\Abb^d\smallsetminus\{0\})^{(d)}]_{\Abb^1}$ by Lemma \ref{lem:easy_moore--postnikov}. Therefore it suffices to prove that the map $q_*:[X_+,(\Abb^d\smallsetminus\{0\})^{(d)}]_{\Abb^1}\to\Hr^{d-1}(X,\Kbf_d^\MW)$ induced by $q$ is injective (as we already know that it is surjective). As explained in the discussion following Theorem \ref{theo:moore--postnikov}, the set of $\Abb^1$-homotopy classes of lifts of a given map $\alpha:X_+\to\Kr(\Kbf_d^\MW,d-1)$ along $q$ up to $\Abb^1$-homotopy is a quotient of the set \[\widetilde{\Hr}^d(X_+,\pi_d^{\Abb^1}(\Abb^d\smallsetminus\{0\}))=\Hr^d(X,\pi_d^{\Abb^1}(\Abb^d\smallsetminus\{0\}))\] (note that $\Abb^d\smallsetminus\{0\}$ is $\Abb^1$-simply connected so no action of the $\Abb^1$-fundamental group on higher homotopy sheaves needs to be considered). This last set is a singleton by Proposition \ref{prop:vanishing_cohomology_homotopy_sheaf} and thus its quotient $q_*^{-1}(\alpha)$ is a singleton, as required.
\end{proof}

\begin{cor}\label{cor:mennicke_bijective}
Let $X=\Spec(A)$ be a smooth affine $\Rb$-scheme of dimension $d\geq 3$ satisfying $(**)$. Then the Mennicke symbol $\psi_{d-1}:\Um_d(A)/\Er_d(A)\to\Hr^{d-1}(X,\Kbf_d^\Mr)$ is bijective.
\end{cor}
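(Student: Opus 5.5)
Writing $X=\Spec(A)$, the plan is to factor $\psi_{d-1}$ as a composite of maps that are each bijective. For a smooth affine scheme, affine representability gives
\[\Um_d(A)/\Er_d(A)\cong[X_+,\Abb^d\smallsetminus\{0\}]_{\Abb^1}.\]
Since $d\geq 3$, this follows from the work of Asok--Hoyois--Wendt, with naive homotopy classes of unimodular rows agreeing with elementary orbits; I take this as standard input. Next, Lemma \ref{lem:wms_bijective} identifies $[X_+,\Abb^d\smallsetminus\{0\}]_{\Abb^1}$ with $\Hr^{d-1}(X,\Kbf_d^\MW)$ via $\zeta_{d-1}(X)$. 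Indeed $(**)$ implies $(*)$, because the $\Zb/2$-part of $(**)$ is exactly $(*)$. Finally, compose with the map
\[\Hr^{d-1}(X,\Kbf_d^\MW)\to\Hr^{d-1}(X,\Kbf_d^\Mr)\]
induced by the forgetful morphism in \eqref{eq:exact_sequence_mw_ktheory}.

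The first thing to check is that this composite really is $\psi_{d-1}$. The generator $\theta_{d-1}\in\Hr^{d-1}(\Abb^d\smallsetminus\{0\},\Kbf_d^\Mr)$ is the image of the fundamental class in $\Hr^{d-1}(\Abb^d\smallsetminus\{0\},\Kbf_d^\MW)\cong\Kbf_0^\MW(k)$, and the latter class is the one represented by the $\kappa$-invariant $\kappa$. By naturality, both $\psi_{d-1}(v)=f(v)^*\theta_{d-1}$ and the image of $\zeta_{d-1}(f(v))$ are the pullback of the same class along $f(v)$. The only care needed is normalisation: one must confirm that $\kappa$ maps to the fundamental class up to a unit, so that $\theta_{d-1}$ corresponds to the image of the generator of $\Kbf_0^\MW$.

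It then remains to show that $\Hr^{d-1}(X,\Kbf_d^\MW)\to\Hr^{d-1}(X,\Kbf_d^\Mr)$ is bijective. The exact sequence \eqref{eq:exact_sequence_mw_ktheory} in degree $d$ gives the cohomology sequence
\[\Hr^{d-1}(X,\Ibf^{d+1})\to\Hr^{d-1}(X,\Kbf_d^\MW)\to\Hr^{d-1}(X,\Kbf_d^\Mr)\to\Hr^{d}(X,\Ibf^{d+1}).\]
\begin{itemize}
\item By Remark \ref{rem:relevance_cohomological_assumptions} (Jacobson's theorem), $\Hr^d(X,\Ibf^{d+1})\cong\Hr^d(X(\Rb),\Zb)$. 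This vanishes under $(*)$, since $X(\Rb)$ then has no compact component. This gives surjectivity.
\item Also by Remark \ref{rem:relevance_cohomological_assumptions}, $\Hr^{d-1}(X,\Ibf^{d+1})$ is a quotient of $\Hr^d(X(\Rb),\Zb/2)\oplus\Hr^{d-1}(X(\Rb),\Zb)$, with $n=d+1\geq d$ and $\Lc$ trivial. This vanishes by $(**)$, which gives injectivity.
\end{itemize}

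The main obstacle I expect is the bookkeeping in the identification step. One has to confirm that $\psi_{d-1}$ agrees with $\zeta_{d-1}$ followed by the forgetful map, including the normalisation of $\theta_{d-1}$. One also has to confirm that the affine representability bijection is compatible with the maps $f(v)$ used to define $\psi_{d-1}$. Once this is in place, the bijectivity claims follow formally from Lemma \ref{lem:wms_bijective} and the vanishing of the $\Ibf^{d+1}$-cohomology groups.
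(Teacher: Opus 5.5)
Your proposal is correct and follows essentially the same route as the paper. You factor $\psi_{d-1}$, up to sign, as the representability bijection $f$, then $\zeta_{d-1}$ (bijective by Lemma \ref{lem:wms_bijective}), then $\rho_{d-1}$, which is an isomorphism because Jacobson's theorem kills $\Hr^{d-1}(X,\Ibf^{d+1})$ and $\Hr^{d}(X,\Ibf^{d+1})$ under $(**)$. The paper instead checks compatibility on the universal row over $Q_{2d-1}$ via the Asok--Fasel fundamental class, getting commutativity up to a sign $\varepsilon\in\{\pm1\}$ (your ``unit'' normalisation), which does not affect bijectivity.
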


\begin{proof}
First note that (\ref{eq:exact_sequence_mw_ktheory}) induces an exact sequence \[\Hr^{d-1}(X,\Ibf^{d+1})\to\Hr^{d-1}(X,\Kbf_d^\MW)\to\Hr^{d-1}(X,\Kbf_d^\Mr)\to\Hr^{d}(X,\Ibf^{d+1})\] where $\Hr^{i}(X,\Ibf^{d+1})\cong\Hr^i(X(\Rb),\Zb)$ by \cite[Corollary 8.11]{jacobsonRealCohomologyPowers2017}. Thus $\Hr^{d}(X,\Ibf^{d+1})=0$ by the hypothesis $(*)$ (see Remark \ref{rem:relevance_cohomological_assumptions}) and $\Hr^{d-1}(X,\Ibf^{d+1})=\Hr^{d-1}(X(\Rb),\Zb)=0$ by the hypothesis $(**)$. We conclude that the morphism $\rho_{d-1}(X):\Hr^{d-1}(X,\Kbf_d^\MW)\to\Hr^{d-1}(X,\Kbf_d^\Mr)$ is an isomorphism.

Now the assignment $f:\Um_d(A)\to\Hom_\Rb(X,\Abb^d\smallsetminus\{0\})$ taking $v$ to $f(v)$ induces a bijection \[f:\Um_d(A)/\Er_d(A)\xrightarrow{\sim}[X_+,\Abb^d\smallsetminus\{0\}]_{\Abb^1}\] by \cite[Remark 8.10]{morelA1AlgebraicTopologyField2012} which is clearly functorial in $X$. To conclude, it suffices to prove that the diagram
\[\begin{tikzcd}
	{\Um_d(A)/\Er_d(A)} & {[X_+,\Abb^d\smallsetminus\{0\}]_{\Abb^1}} & {\Hr^{d-1}(X,\Kbf_d^\MW)} \\
	{\Um_d(A)/\Er_d(A)} && {\Hr^{d-1}(X,\Kbf_d^\Mr)}
	\arrow["{f}"', "\sim", from=1-1, to=1-2]
	\arrow[equal, from=1-1, to=2-1]
	\arrow["{\zeta_{d-1}}"', "\sim", from=1-2, to=1-3]
	\arrow["\wr", "\rho_{d-1}"', from=1-3, to=2-3]
	\arrow["{\psi_{d-1}}"', from=2-1, to=2-3]
\end{tikzcd}\]
commutes up to a sign (here $\zeta_{d-1}$ is bijective by Lemma \ref{lem:wms_bijective}). We let $\psi_{d-1}^\prime$ denote the clockwise composition in this diagram. Now consider the quadric $Q_{2d-1}=\{(x,y),\sum_ix_iy_i=1\}\subseteq\Abb^{d}\times\Abb^{d}$. Any unimodular row of length $d$ with coefficients in $A$ is pulled back from the universal row $x=(x_1,\ldots,x_d)\in\Um_d(\Rb[Q_{2d-1}])$ by a morphism $X\to Q_{2d-1}$. Therefore by functoriality of the preceding constructions, it suffices to prove that $\psi_{d-1}^\prime(x)=\varepsilon\psi_{d-1}(x)$ for some $\varepsilon\in\{\pm 1\}$ to complete the proof. By \cite[Theorem 1.3.4]{asokEulerClassGroups2021}, there exists a (so-called fundamental) class $\alpha_{\Abb^d\smallsetminus\{0\}}\in\Hr^{d-1}(\Abb^d\smallsetminus\{0\},\Kbf_d^\MW)$ such that $\zeta_{d-1}(g)=g^*\alpha_{\Abb^d\smallsetminus\{0\}}$ for any $g:Q_{2d-1}\to\Abb^d\smallsetminus\{0\}$. The image $\theta_{d-1}^\prime$ of $\alpha_{\Abb^d\smallsetminus\{0\}}$ in $\Hr^{d-1}(\Abb^d\smallsetminus\{0\},\Kbf_d^\Mr)\simeq\Zb$ under $\rho_{d-1}$ is then a generator, hence $\theta_{d-1}^\prime=\varepsilon\theta_{d-1}$ for some $\varepsilon\in\{\pm 1\}$. Now if $p:Q_{2d-1}\to\Abb^d\smallsetminus\{0\}$ is the projection onto the first factor, then $p=f(x)$ in the notation of §\ref{subsection:Um_n}. Thus we have \[\psi_{d-1}^\prime(x)=\rho_{d-1}(Q_{2d-1})\circ\zeta_{d-1}(Q_{2d-1})(p)=\rho_{d-1}(Q_{2d-1})(p^*\alpha_{\Abb^d\smallsetminus\{0\}})=p^*\theta_{d-1}^\prime=\varepsilon p^*\theta_{d-1}.\] But $p^*\theta_{d-1}=\psi_{d-1}(x)$ by definition, hence $\psi_{d-1}^\prime(x)=\varepsilon\psi_{d-1}(x)$ as required.
\end{proof}

\section{Cancellation in corank $\leq 1$ over smooth real affine varieties}\label{section:cancellation}

\subsection{Representability}

For $n\geq 1$, the (Nisnevich simplicial) classifying space $\BGL_n\in\Shv_\Nis(\Sm_k)$ is defined. Given $X\in\Sm_k$, we also denote by $\Vsc_n(X)$ the collection of isomorphism classes of rank $n$ vector bundles on $X$; we denote by $\{\Esc\}$ the class in $\Vsc_n(X)$ of the rank $n$ vector bundle $\Esc$, and we endow $\Vsc_n(X)$ with the base point $\{\Osc_X^{n}\}$.

\begin{thm}[Morel \protect{\cite{morelA1AlgebraicTopologyField2012}}, Schlichting \protect{\cite{schlichtingEulerClassGroups2017}}, Asok--Hoyois--Wendt \protect{\cite{asokAffineRepresentabilityResults2017a}}]\label{theo:representability}
Let $k$ be a field, let $n\geq 1$ and let $X$ be a smooth affine $k$-scheme. Then there is a bijection $[X_+,\BGL_n]_{\Abb^1}\simeq\mathscr{V}_n(X)$ of pointed sets.
\end{thm}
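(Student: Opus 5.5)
This is the affine representability theorem, attributed to Morel, Schlichting and Asok--Hoyois--Wendt. Within this paper the natural course is to assemble it from the cited sources rather than reprove it; the plan below records how that assembly goes.

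The proof proceeds in three steps.

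\textbf{Step 1: Nisnevich-local classification.} Rank $n$ vector bundles on $X$ correspond to $\GL_n$-torsors via the frame bundle. Every such torsor is Zariski-locally trivial, hence Nisnevich-locally trivial. By the discussion of classifying spaces in Section \ref{section:preliminaries}, isomorphism classes of Nisnevich-locally trivial $\GL_n$-torsors on $X$ are in bijection with $[X_+,\BGL_n]_\Nis=\Hr^1_\Nis(X,\GL_n)$. This bijection is pointed: the trivial torsor, corresponding to $\Osc_X^n$, goes to the base point.

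\textbf{Step 2: comparison with $\Abb^1$-homotopy classes.} We must show that the natural map
\[[X_+,\BGL_n]_\Nis\to[X_+,\BGL_n]_{\Abb^1}\]
is a bijection for $X$ smooth affine. The tool is the criterion of Asok--Hoyois--Wendt. Suppose a presheaf $F$ on smooth affine schemes satisfies:
\begin{itemize}
\item Nisnevich excision (affine Nisnevich descent), and
\item homotopy invariance on affines, meaning $F(U)\to F(U\times\Abb^1)$ is an equivalence for $U$ smooth affine.
\end{itemize}
Then $F(X)\simeq\Map(X_+,L_{\Abb^1}L_\Nis F)$ for $X$ smooth affine.

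\emph{Excision.} Apply the criterion to $F=\mathrm{Sing}^{\Abb^1}\BGL_n$, or equivalently to the singular construction of the groupoid of vector bundles. Excision holds because vector bundles glue along Nisnevich distinguished squares of affines; this is descent for quasi-coherent modules along étale neighbourhoods.

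\emph{Homotopy invariance.} Here the key input is the Bass--Quillen statement $\Vsc_n(U)\simeq\Vsc_n(U\times\Abb^m)$ for $U$ smooth affine over a field. This is Lindel's theorem, which covers the non-perfect case through smooth models, combined with Quillen patching. With it one checks that $\pi_0$ of the singular construction is $\Vsc_n$, and that the higher homotopy is controlled by the groups $\GL_n(U\times\Delta^\bullet)$. Since $\GL_n$ is isotropic reductive, the Asok--Hoyois--Wendt results show that $\mathrm{Sing}^{\Abb^1}\GL_n$ satisfies affine Nisnevich excision, and this gives the required homotopy invariance.

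\textbf{Step 3: assemble.} Combining Steps 1 and 2 identifies $[X_+,\BGL_n]_{\Abb^1}$ with $\pi_0$ of the singular construction evaluated at $X$, which by homotopy invariance is $\Vsc_n(X)$. The identification sends the trivial bundle to the base point, so the bijection is pointed.

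For a perfect field one could instead invoke Morel's original argument for $n\neq 2$, supplemented by Schlichting's treatment, but the Asok--Hoyois--Wendt route is uniform in $n$ and in $k$.

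The main obstacle is Step 2, specifically homotopy invariance for non-free bundles. This needs the full Bass--Quillen conjecture for smooth affine algebras over a field, together with a passage from $\pi_0$ to the higher simplicial structure. I would not redo either; I would cite Lindel's theorem for Bass--Quillen and Asok--Hoyois--Wendt, Theorem 5.2.3 and its $\GL_n$ case, for the excision and passage argument. The remaining step, identifying vector bundles with torsors, is routine.
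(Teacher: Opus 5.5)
Your overall route matches what the paper relies on, but Step 2 justifies both hypotheses of the criterion incorrectly. That route is: identify rank $n$ bundles with Nisnevich $\GL_n$-torsors, apply the Asok--Hoyois--Wendt affine representability criterion to the singular construction of the groupoid of bundles, and use Lindel's theorem to identify $\pi_0$. The paper gives no argument beyond citing Morel, Schlichting and Asok--Hoyois--Wendt, so deferring to them is fine.

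Write $\mathcal{F}$ for the presheaf of groupoids of rank $n$ bundles and $F=\mathrm{Sing}^{\Abb^1}\mathcal{F}$.

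\emph{Homotopy invariance.} This is automatic for $F$, since the singular construction of \emph{any} presheaf is $\Abb^1$-invariant. Neither Bass--Quillen nor anything about $\mathrm{Sing}^{\Abb^1}\GL_n$ is needed for it.

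\emph{Excision.} This is where the content lies, and gluing of bundles does not give it. Gluing proves affine Nisnevich excision for $\mathcal{F}$, hence for each level of the simplicial space $m\mapsto\mathcal{F}(-\times\Delta^m)$. But the diagonal of a levelwise homotopy cartesian square of simplicial spaces need not be homotopy cartesian. Nor can you apply the criterion to $\mathcal{F}$ itself: for $n\geq 2$ it is not $\Abb^1$-invariant, since its loop space at $\Osc^n$ is $\GL_n$ and $\GL_n(U\times\Abb^1)\neq\GL_n(U)$.

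The missing idea is that Lindel's theorem is what makes excision pass to the diagonal. For $U$ smooth affine, $\pi_0\mathcal{F}(U\times\Delta^m)=\Vsc_n(U\times\Delta^m)=\Vsc_n(U)$ is constant in $m$. With constant $\pi_0$, the hypotheses of the Bousfield--Friedlander theorem (the $\pi_*$-Kan condition and the fibration condition on $\pi_0$) hold automatically. So the diagonal square is homotopy cartesian and $F$ satisfies affine Nisnevich excision.

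This is the key theorem of Asok--Hoyois--Wendt: affine Nisnevich excision for $\mathcal{F}$, together with $\Abb^1$-invariance of $\pi_0\mathcal{F}$ on affines, yields $\pi_0\mathcal{F}(X)\cong[X_+,\mathcal{F}]_{\Abb^1}$ for $X$ affine. Lindel therefore enters twice: once for excision of $F$, and once for identifying $\pi_0F(X)$ with $\Vsc_n(X)$. By contrast, excision of $\mathrm{Sing}^{\Abb^1}\GL_n$ for isotropic groups plays no role for vector bundles. It is needed for presheaves whose $\pi_0$ is not $\Abb^1$-invariant, such as $\GL_n$ itself.

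Since you ultimately defer to Asok--Hoyois--Wendt, the conclusion stands. The sketch should be rewired so that excision of $F$ rests on gluing plus Lindel, and homotopy invariance is recognised as free.
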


Consider the morphism $\GL_n\rightarrow\GL_{n+1}$ of algebraic groups taking $M$ to $\operatorname{diag}(M,1)$. This morphism induces a map $p:\BGL_n\rightarrow\BGL_{n+1}$ of classifying spaces and thus for every $X\in\Sm_k$ a map \[p_*:[X_+,\BGL_n]_{\Abb^1}\rightarrow[X_+,\BGL_{n+1}]_{\Abb^1}.\] Tracing through the definitions, it is not too difficult to see that $p_*$ corresponds to the map $s_n:\Vsc_n(X)\rightarrow\Vsc_{n+1}(X)$ taking $\{\Esc\}$ to $\{\Esc\oplus\Osc_X\}$ under the bijections provided by Theorem \ref{theo:representability}. We also denote by $\GL$ the colimit of the groups $\GL_n$ under the inclusions $\GL_n\rightarrow\GL_{n+1}$. The classifying space $\BGL$ actually represents reduced algebraic $\Kr$-theory: if $X$ is \emph{any} smooth $k$-scheme (not necessarily affine), we have an isomorphism $[X_+,\BGL]_{\Abb^1}\simeq\widetilde{\Kr}_0(X)$ \cite[§4, Theorem 3.13]{morelA1homotopyTheorySchemes1999}, where $\widetilde{\Kr}_0(X)$ is the kernel of the rank homomorphism. The map $\BGL_n\rightarrow\BGL$ induced by the structure homomorphism $\GL_n\rightarrow\GL$ induces for every $X\in\Sm_k$ a map $[X_+,\BGL_n]_{\Abb^1}\rightarrow[X_+,\BGL]_{\Abb^1}$ which, modulo the bijections $[X_+,\BGL_n]_{\Abb^1}\simeq[X_+,\BGL]_{\Abb^1}$, is the stabilisation map $s_{n,\infty}:\Vsc_n(X)\rightarrow\widetilde{\Kr_0}(X)$ taking $\{\Esc\}$ to the class $[\Esc]-[\Osc_X^n]$ in reduced $\Kr$-theory. By definition of reduced $\Kr$-theory, a rank $n$ vector bundle $\Esc$ is cancellative if, and only if, the fibre of $s_{n,\infty}$ over $[\Esc]-[\Osc_X^n]$ is the singleton reduced to the isomorphism class $\{\Esc\}$. This assertion can be studied by homotopy-theoretic methods.

\subsection{Motivic obstruction theory for vector bundles}

For $n\geq  1$, we consider the $\Abb^1$-fibre sequence \[F_n\to\BGL_n\to\BGL.\] It fits in a commutative diagram
\begin{center}
\begin{tikzcd}
F_{n-1} \arrow[r] \arrow[d] & \Br\GL_{n-1} \arrow[r] \arrow[d] & \Br\GL \arrow[d,equal] \\
F_n \arrow[r] & \Br\GL_n \arrow[r] & \Br\GL 
\end{tikzcd}
\end{center}
that determines an $\Abb^1$-fibre sequence \[\operatorname{fib}(F_{n-1}\to F_n)\to\operatorname{fib}(\Br\GL_{n-1}\to\Br\GL_n)\to *\] By Example \ref{exe:fibre_sequence_BGLn}, the $\Abb^1$-fibre of the map $\Br\GL_{n-1}\to\Br\GL_n$ is $\Abb^n\smallsetminus\{0\}$ so we have an $\Abb^1$-fibre sequence of the form 
\begin{equation}\label{eq:fibre_sequence_of_fibres}
\Abb^n\smallsetminus \{0\}\to F_{n-1}\to F_n.
\end{equation}
By \cite[Theorem 3.2]{asokAlgebraicVectorBundles2014}, the fibre $F_n$ is $\Abb^1$-$(n-1)$-connected. Then for $n\geq 2$, the sheaf $\pi_{n-1}^{\Abb^1}(\Abb^n\smallsetminus \{0\})$ is naturally isomorphic to $\Kbf_n^\MW$ by \cite[Theorem 6.40]{morelA1AlgebraicTopologyField2012} so the long exact sequence of $\Abb^1$-homotopy sheaves induced by the fibre sequence (\ref{eq:fibre_sequence_of_fibres}) is of the form 
\begin{equation}\label{eq:exact_sequence_homotopy_sheaves_fibres}
\pi_n^{\Abb^1}(\Abb^n\smallsetminus \{0\})\to\pi_n^{\Abb^1}(F_{n-1})\to\pi_n^{\Abb^1}(F_n)\to\pi_{n-1}^{\Abb^1}(\Abb^n\smallsetminus \{0\})=\Kbf_n^\MW\to\pi_{n-1}^{\Abb^1}(F_{n-1})\rightarrow\pi_{n-1}^{\Abb^1}(F_n)=*.
\end{equation}
The map $\pi_{n-1}^{\Abb^1}(\Abb^n\smallsetminus \{0\})=\Kbf_n^\MW\to\pi_{n-1}^{\Abb^1}(F_{n-1})$ of homotopy sheaves is determined in \cite[p. 15]{duEnumeratingNonStableVector2022}: it is an isomorphism if $n$ is odd and it is the reduction mod $\eta$ map $\Kbf_n^\MW\to\Kbf_n^\Mr$ if $n$ is even.

We collect some consequences of this analysis that are relevant for the sequel.

\begin{lem}\label{lem:cohomology_homotopy_sheaves_milnor}
Let $X$ be a smooth affine $\Rb$-scheme of dimension $d$, let $\Lc$ be a line bundle on $X$ and let $n\geq d$. Suppose that $\Hr^m(X(\Rb),\Zb(\Lc(\Rb)))=0$ for every $m\geq n-1$. Then the projection homomorphism $\pi_{n-1}^{\Abb^1}(F_{n-1})\to\Kbf_{n}^\Mr$ induces an isomorphism $\Hr^{n-1}(X,\pi_{n-1}^{\Abb^1}(F_{n-1})(\Lc))\cong\Hr^{n-1}(X,\Kbf_n^\Mr)$ of abelian groups.
\end{lem}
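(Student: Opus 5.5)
We first identify the sheaf $\pi_{n-1}^{\Abb^1}(F_{n-1})$ using the exact sequence (\ref{eq:exact_sequence_homotopy_sheaves_fibres}). Since $\pi_{n-1}^{\Abb^1}(F_n)=*$, the map $\Kbf_n^\MW\to\pi_{n-1}^{\Abb^1}(F_{n-1})$ is an epimorphism, and by \cite{duEnumeratingNonStableVector2022} it is either an isomorphism ($n$ odd) or reduction modulo $\eta$ onto $\Kbf_n^\Mr$ ($n$ even). In the even case, $\pi_{n-1}^{\Abb^1}(F_{n-1})\cong\Kbf_n^\Mr$ carries the trivial $\Gm$-action, so the twist by $\Lc$ does nothing and the claim is tautological. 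The real content is the odd case, where $\pi_{n-1}^{\Abb^1}(F_{n-1})(\Lc)\cong\Kbf_n^\MW(\Lc)$. There the projection to $\Kbf_n^\Mr$ is the twisted form of $\Kbf_n^\MW\to\Kbf_n^\Mr$, whose kernel is $\Ibf^{n+1}(\Lc)$ by the $\Gm$-equivariant sequence (\ref{eq:exact_sequence_mw_ktheory}). Since $\Gm$ acts trivially on $\Kbf_n^\Mr$, the twisted quotient is still $\Kbf_n^\Mr$.

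From the short exact sequence $0\to\Ibf^{n+1}(\Lc)\to\Kbf_n^\MW(\Lc)\to\Kbf_n^\Mr\to0$ we get the exact sequence
\[\Hr^{n-1}(X,\Ibf^{n+1}(\Lc))\to\Hr^{n-1}(X,\Kbf_n^\MW(\Lc))\to\Hr^{n-1}(X,\Kbf_n^\Mr)\to\Hr^{n}(X,\Ibf^{n+1}(\Lc)).\]
It therefore suffices to show that $\Hr^{m}(X,\Ibf^{n+1}(\Lc))=0$ for $m\in\{n-1,n\}$. Since $n+1>d$, \cite[Corollary 8.11]{jacobsonRealCohomologyPowers2017}, as recalled in Remark \ref{rem:relevance_cohomological_assumptions}, gives $\Hr^m(X,\Ibf^{n+1}(\Lc))\cong\Hr^m(X(\Rb),\Zb(\Lc(\Rb)))$ for all $m$. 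These groups vanish for $m\geq n-1$ by hypothesis. If $n>d$, the groups in degrees $n-1\ge d$ and $n$ also vanish for dimension reasons, consistent with the hypothesis.

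The main point to check is the identification of the twisted map: that the $\Gm$-equivariant epimorphism $\Kbf_n^\MW\to\pi_{n-1}^{\Abb^1}(F_{n-1})$ (an isomorphism for $n$ odd) is compatible with the projection to $\Kbf_n^\Mr$ in the statement. The plan is to \emph{define} that projection as the factorization of $\Kbf_n^\MW\to\Kbf_n^\Mr$ through this epimorphism. The factorization exists because in both cases the kernel of $\Kbf_n^\MW\to\pi_{n-1}^{\Abb^1}(F_{n-1})$ lies in $\eta\Kbf_{n+1}^\MW=\Ibf^{n+1}$.

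One should also confirm that Jacobson's comparison applies in the twisted setting with the range $n+1\geq d+1$. The references of Remark \ref{rem:relevance_cohomological_assumptions} provide this, together with \cite[Theorem 3.5]{lerbetImageHigherSignature2026} for the twist.
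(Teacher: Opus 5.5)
Your proposal is correct and follows the paper's proof essentially verbatim. The even case is immediate because $\pi_{n-1}^{\Abb^1}(F_{n-1})\cong\Kbf_n^\Mr$ carries the trivial $\Gm$-action. In the odd case, the sequence $0\to\Ibf^{n+1}(\Lc)\to\Kbf_n^\MW(\Lc)\to\Kbf_n^\Mr\to 0$ reduces everything to the vanishing of $\Hr^{j}(X,\Ibf^{n+1}(\Lc))\cong\Hr^j(X(\Rb),\Zb(\Lc(\Rb)))$ for $j\in\{n-1,n\}$ via Jacobson's theorem.

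One side remark is wrong, though harmless. When $n=d+1$, the group in degree $n-1=d$ does not vanish for dimension reasons; it can be nonzero if $X(\Rb)$ has compact components. Your argument uses the hypothesis in that degree anyway, so the proof is unaffected.
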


\begin{proof}
If $n$ is even, then this is clear as the map $\pi_{n-1}^{\Abb^1}(F_{n-1})\to\Kbf_n^\Mr$ is an isomorphism so we may assume $n$ odd, in which case $\pi_{n-1}^{\Abb^1}(F_{n-1})=\Kbf_n^\MW$. The exact sequence \[0\to\Ibf^{n+1}(\Lc)\to\Kbf_n^\MW(\Lc)\to\Kbf_n^\Mr\to 0\] of sheaves on $X$ shows that it suffices to prove the following vanishing statements: the cohomology groups $\Hr^{n-1}(X,\Ibf^{n+1}(\Lc))$ and $\Hr^{n}(X,\Ibf^{n+1}(\Lc))$ vanish. Since $n\geq d$, this follows from Jacobson's theorem \cite[Corollary 8.11]{jacobsonRealCohomologyPowers2017} identifying $\Hr^j(X,\Ibf^{n+1}(\Lc))$ and $\Hr^j(X(\Rb),\Zb(L))$ (where $L=\Lc(\Rb)$) and our assumption on $X(\Rb)$. 
\end{proof}

\begin{lem}\label{lem:iso_homotopy_sheaves_of_fibres}
Let $X$ be a smooth $\Rb$-scheme of dimension $d$ and let $\Lc$ be a line bundle on $X$. Suppose that $(**_\Lc)$ holds. Then the map $\pi_d^{\Abb^1}(F_{d-1})\to\pi_d^{\Abb^1}(F_d)$ induces an isomorphism $\Hr^d(X,\pi_d^{\Abb^1}(F_{d-1})(\Lc))\cong\Hr^d(X,\pi_d^{\Abb^1}(F_{d})(\Lc))$ of abelian groups.
\end{lem}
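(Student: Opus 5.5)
We will use the long exact sequence \eqref{eq:exact_sequence_homotopy_sheaves_fibres} with $n=d$, twisted by $\Lc$. It reads
\[\pi_d^{\Abb^1}(\Abb^d\smallsetminus\{0\})\to\pi_d^{\Abb^1}(F_{d-1})\xrightarrow{g}\pi_d^{\Abb^1}(F_d)\to\Kbf_d^\MW\xrightarrow{h}\pi_{d-1}^{\Abb^1}(F_{d-1})\to 0.\]
All these sheaves are strictly $\Abb^1$-invariant and the maps are $\Gm$-equivariant, so the sequence stays exact after twisting. Write $\Kbf:=\ker g$, $\Cbf:=\coker g$; then $\Cbf=\ker h$ and $\Kbf$ is a quotient of $\pi_d^{\Abb^1}(\Abb^d\smallsetminus\{0\})$. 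By Lemma \ref{lem:unique_divisibility_vanishing} applied to $g(\Lc)$, the isomorphism on $\Hr^d$ will follow (together with surjectivity on $\Hr^{d-1}$, which we do not need) once we show three vanishings:
\[\Hr^d(X,\Kbf(\Lc))=0,\qquad \Hr^{d-1}(X,\Cbf(\Lc))=0,\qquad \Hr^d(X,\Cbf(\Lc))=0.\]
In fact, for the isomorphism on $\Hr^d$ alone, the diagram in the proof of Lemma \ref{lem:unique_divisibility_vanishing} shows that $\Hr^d(X,\Kbf(\Lc))=0$ and $\Hr^{d-1}(X,\Cbf(\Lc))=0$ suffice: the first gives $\Hr^d(\pi_d(F_{d-1}))\cong\Hr^d(\im g)$, and the second gives injectivity of $\Hr^d(\im g)\to\Hr^d(\pi_d(F_d))$. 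Surjectivity there then needs $\Hr^d(X,\Cbf(\Lc))=0$.

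\emph{The kernel term.} $\Kbf(\Lc)$ is a quotient of $\pi_d^{\Abb^1}(\Abb^d\smallsetminus\{0\})(\Lc)$. By Lemma \ref{lem:right_exactness_hd}, $\Hr^d(X,\Kbf(\Lc))$ is a quotient of $\Hr^d(X,\pi_d^{\Abb^1}(\Abb^d\smallsetminus\{0\})(\Lc))$, which vanishes by Proposition \ref{prop:vanishing_cohomology_homotopy_sheaf} since $(**_\Lc)$ implies $(*)$.

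\emph{The cokernel term.} Use the identification of $h$ from \cite{duEnumeratingNonStableVector2022}.
\begin{itemize}
\item If $d$ is odd, $h$ is an isomorphism, so $\Cbf=0$ and there is nothing to prove.
\item If $d$ is even, $h$ is the reduction $\Kbf_d^\MW\to\Kbf_d^\Mr$, so $\Cbf(\Lc)=\Ibf^{d+1}(\Lc)$ by \eqref{eq:exact_sequence_mw_ktheory}. By \cite[Corollary 8.11]{jacobsonRealCohomologyPowers2017} we have $\Hr^j(X,\Ibf^{d+1}(\Lc))\cong\Hr^j(X(\Rb),\Zb(\Lc(\Rb)))$. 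For $j=d-1$ this vanishes by $(**_\Lc)$; for $j=d$ it vanishes because $X(\Rb)$ has no compact component (Remark \ref{rem:relevance_cohomological_assumptions}).
\end{itemize}

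The main point to check is Jacobson's theorem, which is stated for affine $X$ while the lemma as stated only says smooth. I would assume affine, as in the intended application, or note that the comparison $\Hr^j(X,\Ibf^{n})\cong\Hr^j(X(\Rb),\Zb)$ for $n>d$ holds for any smooth $X$ of dimension $d$. A secondary point is the twist bookkeeping: the twist of $\Ibf^{d+1}$ must be by $\Lc$, with $\Kbf_d^\Mr$ untwisted, so that the twisted Jacobson isomorphism applies. This is consistent with the proof of Lemma \ref{lem:cohomology_homotopy_sheaves_milnor}.
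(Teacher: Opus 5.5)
Your proof is correct and is essentially the paper's argument. The paper also uses right exactness of $\Hr^d$ together with Proposition \ref{prop:vanishing_cohomology_homotopy_sheaf} to identify $\Hr^d(X,\pi_d^{\Abb^1}(F_{d-1})(\Lc))$ with $\Hr^d$ of the cokernel $\Qbf(\Lc)$ of $\pi_d^{\Abb^1}(\Abb^d\smallsetminus\{0\})(\Lc)\to\pi_d^{\Abb^1}(F_{d-1})(\Lc)$, which is your $\im g$. It then splits into two cases: for $d$ odd, $g$ is onto; for $d$ even, $\coker g=\Ibf^{d+1}(\Lc)$ is killed in degrees $d-1$ and $d$ by Jacobson's theorem.

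On your caveat, the step that genuinely needs $X$ affine is the appeal to Proposition \ref{prop:vanishing_cohomology_homotopy_sheaf}, which is stated only for affine varieties. So the lemma is tacitly about affine $X$ in both arguments, and your second alternative (relying on the comparison for $\Ibf^{n}$ with $n>d$) would not remove that assumption.
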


\begin{proof}
Suppose first that $d$ is even. As noted after (\ref{eq:exact_sequence_homotopy_sheaves_fibres}), in this case, there is an exact sequence \[\pi_d^{\Abb^1}(\Abb^d\smallsetminus \{0\})\to\pi_d^{\Abb^1}(F_{d-1})\to\pi_d^{\Abb^1}(F_d)\to\Kbf_d^\MW\to\Kbf_d^\Mr\to 0\] of strictly $\Abb^1$-invariant sheaves which is $\Gm$-equivariant since it is induced by an $\Abb^1$-fibre sequence. It determines an exact sequence \[\pi_d^{\Abb^1}(\Abb^d\smallsetminus \{0\})(\Lc)\to\pi_d^{\Abb^1}(F_{d-1})(\Lc)\to\pi_d^{\Abb^1}(F_d)(\Lc)\to\Ibf^{d+1}(\Lc)\to 0\] of sheaves on $X$. Let $\Qbf$ be the cokernel of the homomorphism $\pi_d^{\Abb^1}(\Abb^d\smallsetminus \{0\})\to\pi_d^{\Abb^1}(F_{d-1})$ of sheaves. By Lemma \ref{lem:right_exactness_hd}, there is an induced exact sequence \[\Hr^d(X,\pi_d^{\Abb^1}(\Abb^d\smallsetminus \{0\})(\Lc))\to\Hr^d(\pi_d^{\Abb^1}(F_{d-1})(\Lc))\to\Hr^d(X,\Qbf(\Lc))\to 0\] where $\Hr^d(X,\pi_d^{\Abb^1}(\Abb^d\smallsetminus \{0\})(\Lc))=0$ by Proposition \ref{prop:vanishing_cohomology_homotopy_sheaf}. There is also an exact sequence \[\Hr^{d-1}(X(\Rb),\Ibf^{d+1}(\Lc))\to\Hr^d(X,\Qbf(\Lc))\to\Hr^d(X,\pi_d^{\Abb^1}(F_d)(\Lc))\to\Hr^d(X,\Ibf^{d+1}(\Lc))\to 0.\] By Jacobson's theorem, the group $\Hr^j(X,\Ibf^{d+1}(\Lc))$ is isomorphic to $\Hr^j(X(\Rb),\Zb(L))$ for $j=d-1,d$ and therefore vanishes by assumption. Consequently, the maps \[\Hr^d(\pi_d^{\Abb^1}(F_{d-1})(\Lc))\to\Hr^d(X,\Qbf(\Lc)),\;\Hr^d(X,\Qbf(\Lc))\to\Hr^d(X,\pi_d^{\Abb^1}(F_d)(\Lc))\] are isomorphisms hence so is the composite $\Hr^d(X,\pi_d^{\Abb^1}(F_{d-1})(\Lc))\to\Hr^d(X,\pi_d^{\Abb^1}(F_d)(\Lc))$ as required.

Suppose now that $d$ is odd. Then (\ref{eq:exact_sequence_homotopy_sheaves_fibres}) yields an exact sequence \[\pi_{d}^{\Abb^1}(\Abb^d\smallsetminus \{0\})\to\pi_d^{\Abb^1}(F_{d-1})\to\pi_d^{\Abb^1}(F_d)\to 0\] of abelian sheaves. It induces an exact sequence \[\Hr^d(X,\pi_d^{\Abb^1}(\Abb^d\smallsetminus \{0\})(\Lc))\to\Hr^d(X,\pi_d^{\Abb^1}(F_{d-1})(\Lc))\to\Hr^d(X,\pi_d^{\Abb^1}(F_d)(\Lc))\to 0\] again by Lemma \ref{lem:right_exactness_hd}, where $\Hr^d(X,\pi_d^{\Abb^1}(\Abb^d\smallsetminus \{0\})(\Lc))=0$ by Proposition \ref{prop:vanishing_cohomology_homotopy_sheaf}. This proves that the map $\Hr^d(X,\pi_d^{\Abb^1}(F_{d-1})(\Lc))\to\Hr^d(X,\pi_d^{\Abb^1}(F_d)(\Lc))$ is an isomorphism as required.
\end{proof}

\subsection{Corank $0$}

We start with the case of cancellation in corank $0$, which is easier. Thus we fix an integer $d\geq 2$, which we think of as the dimension of the varieties under consideration, and we study the morphism $\pi_d:\BGL_d\rightarrow\BGL$. Let $X$ be a smooth affine $\Rb$-scheme. Recall that cancellation for rank $d$ vector bundles on $X$ is equivalent to the statement that the map \[(\pi_d)_*:[X_+,\BGL_d]_{\Abb^1}\to[X_+,\BGL]_{\Abb^1}\] is injective. We study this assertion by obstruction-theoretic methods. The fibre $F_d$ of $\pi_d$ being $\Abb^1$-$(d-1)$-connected, the first non-trivial stage of the Moore--Postnikov tower of $\pi_d$ will be denoted by $\widetilde{E}_{d}$ and sits in a pullback square
\begin{center}
\begin{tikzcd}
\widetilde{E}_{d} \arrow[d,swap,"\widetilde{p}_d"] \arrow[r] & \Br\mathbb{G}_m \arrow[d] \\
\BGL \arrow[r]          & \Kr^{\mathbb{G}_m}(\pi_{d}^{\Abb^1}(F),d+1)
\end{tikzcd}
\end{center}
as $\pi_1^{\Abb^1}(\BGL)=\mathbb{G}_m$ via the determinant map, inducing a sequence
\begin{equation}\label{eq:corank_0}
\pi_1(\Map(X_+,\BGL),(\widetilde{p}_d)_*(\xi))\to\Hr^d(X,\pi_d^{\Abb^1}(F_d)(\Lc))\to[X_+,\widetilde{E}_d]_{\Abb^1}\to[X_+,\BGL]_{\Abb^1}
\end{equation}
of pointed sets which is exact for every map $\xi:X_+\to\widetilde{E}$. We first make the following observation.

\begin{lem}\label{lem:bijectivity_corank_0}
The map $i_d:\BGL_d\to\widetilde{E}_d$ determined from the Moore--Postnikov tower of $\pi_d$ induces a bijection $(i_d)_*:[X_+,\BGL_d]_{\Abb^1}\xrightarrow{\cong}[X_+,\widetilde{E}_d]_{\Abb^1}$.
\end{lem}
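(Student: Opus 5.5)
The plan is to read Lemma \ref{lem:bijectivity_corank_0} off Lemma \ref{lem:easy_moore--postnikov}, applied not to $\pi_d$ itself but to the map $i_d:\BGL_d\to\widetilde{E}_d$. Here $X$ is a smooth affine $\Rb$-scheme of dimension $d$, so it has $\Abb^1$-cohomological dimension $\leq d$. By Theorem \ref{theo:moore--postnikov}, the fibre of $i_n:E\to E_n$ is $\Abb^1$-$n$-connected. The stage $\widetilde{E}_d$ is the first non-trivial stage $E_d$ of the Moore--Postnikov tower of $\pi_d:\BGL_d\to\BGL$. This uses that the fibre $F_d$ is $\Abb^1$-$(d-1)$-connected \cite[Theorem 3.2]{asokAlgebraicVectorBundles2014}, so that $E_n\simeq\BGL$ for $n\leq d-1$ and $E_d=\widetilde{E}_d$ (Remark \ref{rem:first_nontrivial_stage}). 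Hence the fibre $G$ of $i_d$ is $\Abb^1$-$d$-connected.

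I will apply Lemma \ref{lem:easy_moore--postnikov} to $i_d$ with $i=d\geq 1$. The lemma gives bijectivity of $(i_d)_*:[X_+,\BGL_d]_{\Abb^1}\to[X_+,\widetilde{E}_d]_{\Abb^1}$ as soon as $i\geq d$, which is exactly our situation.

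I should check that the lemma's hypotheses are met, since it was stated for general maps of motivic spaces whose fibre is $\Abb^1$-$i$-connected with $i\geq 1$. Its proof uses the Moore--Postnikov tower of Theorem \ref{theo:moore--postnikov}, which needs a connected base. The base $\widetilde{E}_d$ is $\Abb^1$-connected, because it fits into a fibre sequence over $\BGL$ with fibre $\Kr(\pi_d^{\Abb^1}(F_d),d)$, and both of these are connected. The fibre $G$ is $\Abb^1$-simply connected because $d\geq 2$. Moreover $\pi_1^{\Abb^1}(\widetilde{E}_d)=\Gm$ acts on the homotopy sheaves of $G$, so the cohomology groups occurring in the argument are twisted, of the form $\Hr^{m}(X,\pi_{m'}^{\Abb^1}(G)(\xi))$. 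These vanish for $m>d$ by the remark on twisted strictly $\Abb^1$-invariant sheaves and cohomological dimension.

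The only delicate point I expect is the index check in the lemma. Obstructions to lifting along $q_{n+1}$ live in $\Hr^{n+2}$ and the set of lifts is a quotient of $\Hr^{n+1}$, both with coefficients in $\pi_{n+1}^{\Abb^1}(G)$. These coefficient sheaves vanish for $n+1\leq d$ by the $d$-connectivity of $G$. For $n+1\geq d+1$, the cohomological degrees are $n+1,n+2\geq d+1>d$, so both groups vanish by the cohomological dimension bound. Hence every stage map is bijective on $[X_+,-]_{\Abb^1}$. Finally, I pass to the limit $\BGL_d\simeq\lim_n G$-stages, as in the proof of Lemma \ref{lem:easy_moore--postnikov}. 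The $\lim^1$ issues are avoided because the tower is eventually constant on $[X_+,-]_{\Abb^1}$ in the bounded-dimension case, and I take this as handled by the lemma.
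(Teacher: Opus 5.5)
Your proposal is correct and follows the paper's proof: the paper simply observes that the fibre of $i_d$ is $\Abb^1$-$d$-connected and, since $X$ has dimension $d$, applies Lemma \ref{lem:easy_moore--postnikov}. Your extra checks (the $\Abb^1$-connectedness of $\widetilde{E}_d$, the twisted coefficients, and the index bookkeeping) are accurate expansions of what that lemma already covers.
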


\begin{proof}
The fibre of $i_d$ is $\Abb^1$-$d$-connected. Since $X$ has dimension $d$, the claim then follows from Lemma \ref{lem:easy_moore--postnikov}.
\end{proof}

Thus the injectivity of $(\pi_d)_*$ is equivalent to the injectivity of $(\widetilde{p}_d)_*:[X_+,\widetilde{E}_d]_{\Abb^1}\to[X,\BGL]_{\Abb^1}$. 

\begin{lem}\label{lem:injectivity_corank_0}
Suppose that the hypothesis $(*)$ is satisfied. Then the map $(\widetilde{p}_d)_*$ is injective.
\end{lem}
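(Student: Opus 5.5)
We use the exact sequence (\ref{eq:corank_0}) attached to the first non-trivial stage $\widetilde{E}_d$ of the Moore--Postnikov tower of $\pi_d$. Fix a map $\xi:X_+\to\widetilde{E}_d$ and let $\Lc$ be the line bundle determined by the composite $X_+\to\widetilde{E}_d\to\BGL\to\Br\mathbb{G}_m$, i.e. the determinant of the corresponding class. By the discussion of fibre sequences in Section \ref{section:preliminaries}, the fibre of $(\widetilde{p}_d)_*$ over $(\widetilde{p}_d)_*(\xi)$ is identified with the orbit set of $\Hr^d(X,\pi_d^{\Abb^1}(F_d)(\Lc))$ under the action of $\pi_1(\Map(X_+,\BGL),(\widetilde{p}_d)_*\xi)$. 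Since $\xi$ is arbitrary, it suffices to show that this orbit set is a singleton. The cleanest route is to show directly that $\Hr^d(X,\pi_d^{\Abb^1}(F_d)(\Lc))=0$ for every line bundle $\Lc$ under $(*)$.

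To compute this group, I use the long exact sequence (\ref{eq:exact_sequence_homotopy_sheaves_fibres}) with $n=d$, twisted by $\Lc$; it remains exact because it is $\Gm$-equivariant, coming from an $\Abb^1$-fibre sequence. It gives
\[
\pi_d^{\Abb^1}(\Abb^d\smallsetminus\{0\})(\Lc)\to\pi_d^{\Abb^1}(F_{d-1})(\Lc)\to\pi_d^{\Abb^1}(F_d)(\Lc)\to\Kbf_d^\MW(\Lc)\to\pi_{d-1}^{\Abb^1}(F_{d-1})(\Lc)\to 0.
\]
By Lemma \ref{lem:right_exactness_hd}, $\Hr^d(X,\text{--})$ is right exact. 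Hence $\Hr^d(X,\pi_d^{\Abb^1}(F_d)(\Lc))$ sits in an exact sequence between $\Hr^d$ of the image $\Qbf(\Lc)$ of $\pi_d^{\Abb^1}(F_{d-1})(\Lc)$ and $\Hr^d$ of the kernel $\mathbf{K}(\Lc)$ of $\Kbf_d^\MW(\Lc)\to\pi_{d-1}^{\Abb^1}(F_{d-1})(\Lc)$. Note that only $\Hr^d$ terms are needed here, not $\Hr^{d-1}$, which is why $(*)$ alone should suffice. This is the point where the present statement is easier than Lemma \ref{lem:iso_homotopy_sheaves_of_fibres}.

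The kernel term splits by parity of $d$. If $d$ is odd, the map $\Kbf_d^\MW\to\pi_{d-1}^{\Abb^1}(F_{d-1})$ is an isomorphism, so the kernel is $0$. If $d$ is even, it is the reduction modulo $\eta$, so the kernel is $\Ibf^{d+1}(\Lc)$. Then $\Hr^d(X,\Ibf^{d+1}(\Lc))\cong\Hr^d(X(\Rb),\Zb(L))$ by Jacobson's theorem, and this group vanishes under $(*)$ by Remark \ref{rem:relevance_cohomological_assumptions}.

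The image term $\Hr^d(X,\Qbf(\Lc))$ is, by right exactness, a quotient of $\Hr^d(X,\pi_d^{\Abb^1}(F_{d-1})(\Lc))$. So it remains to control $\pi_d^{\Abb^1}(F_{d-1})$, and I expect this to be the main obstacle.

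My first attempt is to use the fibre sequence (\ref{eq:fibre_sequence_of_fibres}) once more, $\Abb^{d-1}\smallsetminus\{0\}\to F_{d-2}\to F_{d-1}$. This exhibits $\pi_d^{\Abb^1}(F_{d-1})$ as an extension of a subsheaf of $\pi_{d-2}^{\Abb^1}(\Abb^{d-1}\smallsetminus\{0\})=\Kbf_{d-1}^\MW$ by a quotient of $\pi_d^{\Abb^1}(F_{d-2})$, which does not terminate well.

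If that gets stuck, I will fall back on a more conceptual argument. Instead of proving that the group vanishes, I will show that the action in (\ref{eq:corank_0}) is transitive. The idea is that $\pi_1(\Map(X_+,\BGL))=\Kr_1(X)$ maps onto $\Hr^d(X,\pi_d^{\Abb^1}(F_d)(\Lc))$ through the connecting map of the Postnikov stage. By naturality of Moore--Postnikov towers, this connecting map factors through $\pi_d^{\Abb^1}(\BGL)\to\pi_{d-1}^{\Abb^1}(F_d)$-type data, in the manner of \cite{faselSuslinsCancellationConjecture2025}.

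Concretely, I would compare with the case $X(\Rb)=\emptyset$: pass to $X_\Cb$, where Suslin's theorem gives surjectivity, and use a transfer argument. This handles all odd torsion, as in Proposition \ref{prop:kdivisibility_odd}. For the $2$-primary part, I would use divisibility of $\Hr^d(X,\Kbf_{d+1}^\Mr)$ from Proposition \ref{prop:divisibility_Hd-1Kd_real} together with the vanishing in Proposition \ref{prop:vanishing_cohomology_homotopy_sheaf}. This reduces the image term to quotients of $\Hr^d(X,\Kbf_{d+1}^\Mr/24)=0$ and of $\Hr^d(X,\GW_{d+1}^d(\Lc))=0$ (Proposition \ref{prop:vanishing_hd_GW_d+1_d}). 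Once $\Hr^d(X,\pi_d^{\Abb^1}(F_d)(\Lc))$ vanishes, or is hit by $\Kr_1(X)$, the fibres of $(\widetilde{p}_d)_*$ are singletons and injectivity follows.
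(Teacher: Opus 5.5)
Your main line cannot succeed, because the group you are trying to kill does not vanish in general. Under $(*)$ one has $\Hr^m(X(\Rb),\Zb(\Lc(\Rb)))=0$ for all $m\geq d$. So Lemma \ref{lem:cohomology_homotopy_sheaves_milnor} with $n=d+1$ gives $\Hr^d(X,\pi_d^{\Abb^1}(F_d)(\Lc))\cong\Hr^d(X,\Kbf_{d+1}^\Mr)$; indeed $\pi_d^{\Abb^1}(F_d)$ is $\Kbf_{d+1}^\MW$ or $\Kbf_{d+1}^\Mr$ depending on the parity of $d$. Hypothesis $(*)$ only makes this group divisible (Proposition \ref{prop:divisibility_Hd-1Kd_real}), not zero. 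Your reduction through $\Abb^d\smallsetminus\{0\}\to F_{d-1}\to F_d$ is correct but circular. Once the $\Ibf^{d+1}(\Lc)$ term and the image of $\pi_d^{\Abb^1}(\Abb^d\smallsetminus\{0\})$ are disposed of, what remains is $\Hr^d(X,\pi_d^{\Abb^1}(F_{d-1})(\Lc))$. This surjects onto the group you started from, and is isomorphic to it under $(**_\Lc)$ (Lemma \ref{lem:iso_homotopy_sheaves_of_fibres}). Your last step then treats $\pi_d^{\Abb^1}(F_{d-1})$ as if it were $\pi_d^{\Abb^1}(\Abb^d\smallsetminus\{0\})$. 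The vanishing of $\Hr^d(X,\Kbf_{d+1}^\Mr/24)$ and $\Hr^d(X,\GW_{d+1}^d(\Lc))$ is what proves Proposition \ref{prop:vanishing_cohomology_homotopy_sheaf}, so it only kills the term you had already discarded.

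Transitivity of the $\Kr_1(X)$-action is the right target, and it is what the paper proves, but your mechanism does not deliver it. A transfer along $X_\Cb\to X$ requires the connecting map $\Delta_d(\xi)\colon\Kr_1(X)\to\Hr^d(X,\Kbf_{d+1}^\Mr)$ to commute with pushforward, and nothing guarantees this. By \cite[Proposition 6.3]{duEnumeratingNonStableVector2022}, $\Delta_d(\xi)(M)=c_{d+1}(M)+\sum_{j=1}^{d}c_j(M)c_{d+1-j}(\xi)$, a Chern class expression in $M$ and the fixed class $\xi$, and Chern classes do not commute with finite pushforward. (With such a compatibility your idea would close quickly, since the cokernel of $\Delta_d(\xi)$ would be a divisible group killed by $2$; but that compatibility is the whole difficulty.)

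The missing idea is to restrict $\Delta_d(\xi)$ to matrices coming from the Suslin matrix map $S_{d+1}\colon Q_{2d+1}\to\GL$.
\begin{itemize}
\item Because $\widetilde{\Hr}^{m-1}(Q_{2d+1},\Kbf_m^\Mr)=0$ for $m\neq d+1$, every $c_m\circ S_{d+1}$ with $m\leq d$ vanishes, so all cross terms involving $\xi$ drop out.
\item Meanwhile $c_{d+1}\circ S_{d+1}=d!\,\kappa$, so $\Delta_d(\xi)\circ(S_{d+1})_*=d!\,\kappa_*$.
\item The map $\kappa_*\colon[X_+,Q_{2d+1}]_{\Abb^1}\to\Hr^d(X,\Kbf_{d+1}^\Mr)$ is surjective. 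This follows from surjectivity of the Hurewicz map onto $\Hr^d(X,\Kbf_{d+1}^\MW)$ \cite{asokEulerClassGroups2021}, then right exactness of $\Hr^d$ (Lemma \ref{lem:right_exactness_hd}).
\item The $d!$-divisibility of $\Hr^d(X,\Kbf_{d+1}^\Mr)$ under $(*)$ then makes $d!\,\kappa_*$, hence $\Delta_d(\xi)$, surjective.
\end{itemize}
So $(*)$ enters not by killing the obstruction group but by making it divisible enough for the connecting map to exhaust it.
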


\begin{proof}
Let $\xi:X_+\to\widetilde{E}_d$ be a map; there is an induced map $(\widetilde{p}_d)_*\xi:X_+\to\BGL$ by composition with $\widetilde{p}_d$ and this map determines a reduced $\Kr$-theory class of $X$ with a well-defined determinant $\Lc$. The line bundle $\Lc$ is the $\mathbb{G}_m$-torsor corresponding to the composite map $X_+\to\BGL\to\Br\pi_1^{\Abb^1}(\BGL)$, recalling that $\pi_1^{\Abb^1}(\BGL)=\mathbb{G}_m$ via the determinant morphism $\GL\to\mathbb{G}_m$. The space $\BGL$ is a group object in the $\Abb^1$-homotopy category $\Hr(k)$ by \cite[Corollary 5.6]{duEnumeratingNonStableVector2022} so we can consider translation along $(\widetilde{p}_d)_*(\xi)$, which is well-defined at the level of homotopy groups and yields an isomorphism \[\pi_0(\Omega_*\Map(X_+,\BGL))\xrightarrow{\cong}\pi_0(\Omega_{(\widetilde{p}_d)_*\xi}\Map(X_+,\BGL))\] of abelian groups. We also recall that we have an identification $\Hr^d(X,\pi_d^{\Abb^1}(F_d)(\Lc))\cong\Hr^d(X,\Kbf_{d+1}^\Mr)$ (Lemma \ref{lem:cohomology_homotopy_sheaves_milnor}). Thus we may rewrite (\ref{eq:corank_0}) as follows: \[\pi_0(\Omega_*\Map(X,\BGL))=[X,\GL]_{\Abb^1}\xrightarrow{\Delta_d(\xi)}\Hr^d(X,\Kbf_{d+1}^\Mr)\to[X_+,\widetilde{E}_d]_{\Abb^1}\xrightarrow{(\widetilde{p}_d)_*}[X_+,\BGL]_{\Abb^1}.\] To show that $(\widetilde{p}_d)_*^{-1}((\widetilde{p}_d)_*\xi)$ is a singleton, it now suffices to prove that $\Delta_d(\xi)$ is surjective. This map was computed in \cite[Proposition 6.3]{duEnumeratingNonStableVector2022} using Chern classes of matrices: we have \[\Delta_d(\xi)(M)=c_{d+1}(M)+\sum_{j=1}^dc_j(M)c_{d+1-j}(\xi)\] for $M\in[X_+,\GL]_{\Abb^1}=\Kr_1(X)$. We now consider the Suslin matrix morphism \[S_{n}:Q_{2n-1}\to\GL\] of spaces defined in \cite[§3.5]{asokAlgebraicVectorBundles2014} using the explicit construction of \cite[§5]{suslinStablyFreeModules1977}. The $m$-th Chern class induces a map $c_m:\GL\rightarrow\Kr(\Kbf_m^\Mr,m-1)$ and the composite $c_m\circ S_n$ induces a class in the abelian group $[Q_{2n-1},\Kr(\Kbf_m^\Mr,m-1))]_{\Abb^1}=\widetilde{\Hr}^{m-1}(Q_{2n-1},\Kbf_m^\Mr)$. By \cite[Lemma 4.5]{asokAlgebraicVectorBundles2014}, the group $\widetilde{\Hr}^{m-1}(Q_{2n-1},\Kbf_m^\Mr)$ vanishes if $m\neq n$, and if $m=n$, then $\widetilde{\Hr}^{n-1}(Q_{2n-1},\Kbf_n^\Mr)=\Zb\kappa$ where $\kappa$ is the $k$-invariant map \[\kappa:Q_{2n-1}\xrightarrow{\kappa_{n-1}}\Kr(\pi_{n-1}^{\Abb^1}(Q_{2n-1})=\Kbf_n^\MW,n-1)\to\Kr(\Kbf_n^\Mr,n-1).\] Now $c_n\circ S_n=(n-1)!\kappa$ by \cite[Remark 6.10]{duEnumeratingNonStableVector2022}. This yields the formula \[\Delta_d(\xi)\circ(S_{d+1})_*(\mu)=d!\kappa_*(\mu)\] for every $\mu\in[X_+,Q_{2d+1}]_{\Abb^1}$. To conclude that $\Delta_d(\xi)$ is surjective, it now suffices to prove the following statements.
\begin{itemize}
    \item \emph{The map $\kappa_*:[X_+,Q_{2d+1}]_{\Abb^1}\to[X_+,\Kr(\Kbf_{d+1}^\Mr,d)]_{\Abb^1}=\Hr^d(X,\Kbf_{d+1}^\Mr)$ is surjective.} This follows from \cite[Proposition 1.2.6]{asokEulerClassGroups2021}, which shows that the Hurewicz map \[(\kappa_d)_*:[X_+,Q_{2d+1}]_{\Abb^1}\to[X_+,\Kr(\pi_{d}^{\Abb^1}(Q_{2d+1}),d)]_{\Abb^1}=\Hr^d(X,\Kbf_{d+1}^\MW)\] is surjective, combined with Lemma \ref{lem:right_exactness_hd} which guarantees that the epimorphism $\Kbf_{d+1}^\MW\to\Kbf_{d+1}^\Mr$ of sheaves induces an epimorphism $\Hr^d(X,\Kbf_{d+1}^\MW)\to\Hr^d(X,\Kbf_{d+1}^\Mr)$ of groups.
    \item \emph{The group $\Hr^d(X,\Kbf_{d+1}^\Mr)$ is $d!$-divisible.} Since the hypothesis $(*)$ is satisfied, this follows from Proposition \ref{prop:divisibility_Hd-1Kd_real}.
\end{itemize}
This completes the proof.
\end{proof}

We thus obtain the following result, which generalises  \cite[Theorems 4.11 and 4.15]{dasOrbitSpacesUnimodular2018} when $X(\Rb)$ has no compact connected component and the smooth case of \cite[Theorem 2.8]{banerjeeZeroCyclesMennicke2025}.

\begin{cor}
Suppose that the hypothesis $(*)$ is satisfied. Then cancellation holds in corank $0$.
\end{cor}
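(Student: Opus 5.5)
The plan is to assemble the two preceding lemmas with the representability dictionary. Let $X$ be a smooth affine $\Rb$-scheme of dimension $d$ satisfying $(*)$. By Theorem \ref{theo:representability} and the discussion following it, cancellation for rank $d$ bundles on $X$ amounts to the injectivity of $(\pi_d)_*:[X_+,\BGL_d]_{\Abb^1}\to[X_+,\BGL]_{\Abb^1}$. Indeed, $(\pi_d)_*$ is identified with the stabilisation map $s_{d,\infty}:\Vsc_d(X)\to\widetilde{\Kr}_0(X)$. Two rank $d$ bundles $\Esc,\Esc'$ with $\Esc\oplus\Osc_X^m\simeq\Esc'\oplus\Osc_X^m$ have the same image under $s_{d,\infty}$, so injectivity forces $\Esc\simeq\Esc'$.

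Next I factor $\pi_d$ through the first nontrivial stage of its Moore--Postnikov tower, as $\pi_d=\widetilde{p}_d\circ i_d$ (Theorem \ref{theo:moore--postnikov}, Remark \ref{rem:first_nontrivial_stage}). On $X_+$-homotopy classes this gives $(\pi_d)_*=(\widetilde{p}_d)_*\circ(i_d)_*$.

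By Lemma \ref{lem:bijectivity_corank_0}, the map $(i_d)_*$ is a bijection. This uses only $\dim X=d$, through Lemma \ref{lem:easy_moore--postnikov}, since the fibre of $i_d$ is $\Abb^1$-$d$-connected. By Lemma \ref{lem:injectivity_corank_0}, the map $(\widetilde{p}_d)_*$ is injective under $(*)$. The composite is therefore injective, which proves the corollary.

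There is no real obstacle at this stage; the substantive work is inside Lemma \ref{lem:injectivity_corank_0}. The one point to check is the input to the identification $\Hr^d(X,\pi_d^{\Abb^1}(F_d)(\Lc))\cong\Hr^d(X,\Kbf_{d+1}^\Mr)$ used there. That identification requires the vanishing of $\Hr^m(X(\Rb),\Zb(L))$ for $m\geq d$, and Remark \ref{rem:relevance_cohomological_assumptions} shows that $(*)$ provides exactly this for $m=d$; higher degrees vanish for dimension reasons. So the hypotheses of Lemma \ref{lem:cohomology_homotopy_sheaves_milnor} with $n=d+1$ are met, and the corollary follows.
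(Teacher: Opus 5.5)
Your proposal is correct and matches the paper's argument. The paper gives no separate proof: the corollary is read off from the factorisation $(\pi_d)_*=(\widetilde{p}_d)_*\circ(i_d)_*$, combining Lemma \ref{lem:bijectivity_corank_0} ($(i_d)_*$ bijective) and Lemma \ref{lem:injectivity_corank_0} ($(\widetilde{p}_d)_*$ injective under $(*)$). Your check that $(*)$ supplies the hypotheses of Lemma \ref{lem:cohomology_homotopy_sheaves_milnor} with $n=d+1$ is also accurate.
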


\subsection{Corank $1$}

We now move to the case of corank $1$ bundles, which is considerably more subtle. We let $d\geq 3$ and we consider the fibre $F_{d-1}$ of the map $\pi_{d-1}:\BGL_{d-1}\to\BGL$. This fibre is $\Abb^1$-$(d-2)$-connected so the first two non-trivial stages $E_{d-1}$ and $E_d$ sit in fibre product diagrams of the form
\begin{center}
\begin{tikzcd}
E_{d-1} \arrow[r] \arrow[d,swap,"p_{d-1}"] & \Br\mathbb{G}_m \arrow[d] \\
\BGL \arrow[r]              & \Kr^{\mathbb{G}_m}(\pi_{d-1}^{\Abb^1}(F_{d-1}),d)
\end{tikzcd}
\begin{tikzcd}
E_{d} \arrow[r] \arrow[d,swap,"q_{d}"] & \Br\mathbb{G}_m \arrow[d] \\
E_{d-1} \arrow[r]            & \Kr^{\mathbb{G}_m}(\pi_{d}^{\Abb^1}(F_{d-1}),d+1)
\end{tikzcd}
\end{center}

We first note that the analogue of Lemma \ref{lem:bijectivity_corank_0} also holds in this context.

\begin{lem}\label{lem:bijectivity_corank_1}
The map $i_d:\BGL_{d-1}\to E_d$ determined from the Moore--Postnikov tower of $\pi_{d-1}$ induces a bijection $(i_d)_*:[X_+,\BGL_{d-1}]_{\Abb^1}\xrightarrow{\cong}[X_+,E_d]_{\Abb^1}$.
\end{lem}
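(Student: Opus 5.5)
The plan is to imitate the proof of Lemma \ref{lem:bijectivity_corank_0} and reduce directly to Lemma \ref{lem:easy_moore--postnikov}, applied to the map $i_d:\BGL_{d-1}\to E_d$ and the smooth scheme $X$ of dimension $d$.

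First, I would identify the connectivity of the fibre of $i_d$. By Theorem \ref{theo:moore--postnikov}, applied to $\pi_{d-1}:\BGL_{d-1}\to\BGL$, the fibre of $i_n$ is $\Abb^1$-$n$-connected for every $n$. Here the stage $E_d$ is the $d$-th stage of the tower, so its fibre is $\Abb^1$-$d$-connected.

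I would check this concretely via the long exact sequence of homotopy sheaves. The fibre $F_{d-1}$ of $\pi_{d-1}$ is $\Abb^1$-$(d-2)$-connected, and by Remark \ref{rem:first_nontrivial_stage} the stages $E_n$ for $n\le d-2$ agree with $\BGL$. The map $i_d$ induces isomorphisms on $\pi_j^{\Abb^1}$ for $j\le d$ and an epimorphism on $\pi_{d+1}^{\Abb^1}$. This is because $E_d\to\BGL$ has fibre the $d$-th Postnikov truncation of $F_{d-1}$, whose homotopy sheaves are $\pi_{d-1}^{\Abb^1}(F_{d-1})$ and $\pi_d^{\Abb^1}(F_{d-1})$. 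Hence the homotopy fibre of $i_d$ is $\Abb^1$-$d$-connected.

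Next, I would verify the hypotheses of Lemma \ref{lem:easy_moore--postnikov}. The fibre is $\Abb^1$-$i$-connected with $i=d\ge 1$, and $X$ has $\Abb^1$-cohomological dimension $\le d$ since its Krull dimension is $d$ (as remarked after the lemma). The spaces involved are motivic spaces: $\BGL_{d-1}$ after $\Abb^1$-localization, and $E_d$ by construction. Lemma \ref{lem:easy_moore--postnikov} with $i\ge d$ then gives bijectivity of $(i_d)_*:[X_+,\BGL_{d-1}]_{\Abb^1}\to[X_+,E_d]_{\Abb^1}$.

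The only subtle point is the $\pi_1$-action. The base $E_d$ is not simply connected: it has $\pi_1^{\Abb^1}=\Gm$. Lemma \ref{lem:easy_moore--postnikov} handles this because its obstruction groups are twisted cohomology groups that vanish in degrees greater than $d$ regardless of the twist. So no extra work is needed beyond noting that the lemma places no simple-connectivity requirement on the base.
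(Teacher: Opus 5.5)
Your proposal is correct and follows the paper's proof: the paper likewise notes that the fibre of $i_d$ is $\Abb^1$-$d$-connected by the Moore--Postnikov construction, then applies Lemma \ref{lem:easy_moore--postnikov} with $i\geq d$ to the $d$-dimensional scheme $X$. Your long-exact-sequence check of the connectivity and your remark that the twisted obstruction groups vanish regardless of the $\Gm$-twist make explicit what the paper leaves implicit.
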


\begin{proof}
The proof is the same as was provided for Lemma \ref{lem:bijectivity_corank_0}.
\end{proof}


\begin{lem}\label{lem:corank_1_first_stage_bijective}
Let $X$ be a smooth affine $\Rb$-scheme of dimension $d$ satisfying the hypothesis $(**')$. Then the map $(p_{d-1})_*:[X_+,E_{d-1}]_{\Abb^1}\to[X_+,\BGL]_{\Abb^1}$ is injective.
\end{lem}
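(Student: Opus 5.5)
We follow the pattern of the proof of Lemma \ref{lem:injectivity_corank_0}, now one step lower in the tower. Fix $\xi:X_+\to E_{d-1}$ and let $\Lc$ be the determinant of the reduced $\Kr$-theory class $(p_{d-1})_*\xi$. As in that proof, we use that $\BGL$ is a group object in $\Hr(k)$ (\cite[Corollary 5.6]{duEnumeratingNonStableVector2022}) to translate the loop space along $(p_{d-1})_*\xi$. The pullback square defining $E_{d-1}$ then yields an exact sequence of pointed sets
\[[X_+,\GL]_{\Abb^1}=\Kr_1(X)\xrightarrow{\Delta_{d-1}(\xi)}\Hr^{d-1}(X,\pi_{d-1}^{\Abb^1}(F_{d-1})(\Lc))\to[X_+,E_{d-1}]_{\Abb^1}\xrightarrow{(p_{d-1})_*}[X_+,\BGL]_{\Abb^1}.\]
It therefore suffices to show that $\Delta_{d-1}(\xi)$ is surjective.

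\textbf{Identifying the target.} Apply Lemma \ref{lem:cohomology_homotopy_sheaves_milnor} with $n=d$. This requires $\Hr^m(X(\Rb),\Zb(L))=0$ for $m\geq d-1$, where $L=\Lc(\Rb)$. For $m=d$ this follows from $(*)$ via Remark \ref{rem:relevance_cohomological_assumptions}, and for $m>d$ it holds for dimension reasons.

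\textbf{Main obstacle.} The case $m=d-1$ is the delicate point. Hypothesis $(**')$ only controls $\Zb/2$-coefficients, not the twisted integral group $\Hr^{d-1}(X(\Rb),\Zb(L))$. When $d$ is even this causes no problem, since $\pi_{d-1}^{\Abb^1}(F_{d-1})=\Kbf_d^\Mr$ directly. When $d$ is odd, I would avoid the identification and instead argue through the surjection
\[\Hr^{d-1}(X,\Kbf_d^\MW(\Lc))\to\Hr^{d-1}(X,\Kbf_d^\Mr).\]
Its surjectivity follows from $\Hr^d(X,\Ibf^{d+1}(\Lc))=0$, which is a consequence of $(*)$. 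Then:
\begin{itemize}
\item[(a)] show that the image of $\Delta_{d-1}(\xi)$ maps onto $\Hr^{d-1}(X,\Kbf_d^\Mr)$;
\item[(b)] handle the $\Ibf^{d+1}(\Lc)$-part separately.
\end{itemize}
For (b), I would first try to show that the image of $\Delta_{d-1}(\xi)$ is stable under the $\Hr^{d-1}(X,\Ibf^{d+1}(\Lc))$-contribution. This contribution is a quotient of $\Hr^{d-1}(X(\Rb),\Zb(L))$, so $(**')$ shows it is $2$-divisible, and the mod $2$ reduction should let one absorb it. Failing that, I would accept reducing to the Milnor target, as the paper's statement suggests.

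\textbf{Surjectivity via Chern classes.} By \cite[Proposition 6.3]{duEnumeratingNonStableVector2022},
\[\Delta_{d-1}(\xi)(M)=c_d(M)+\sum_{j=1}^{d-1}c_j(M)c_{d-j}(\xi).\]
Precompose with the Suslin matrix $S_d:Q_{2d-1}\to\GL$. By \cite[Lemma 4.5]{asokAlgebraicVectorBundles2014}, $c_j\circ S_d=0$ for $j<d$, and by \cite[Remark 6.10]{duEnumeratingNonStableVector2022}, $c_d\circ S_d=(d-1)!\,\kappa$. Hence
\[\Delta_{d-1}(\xi)\circ(S_d)_*=(d-1)!\,\kappa_*\]
on $[X_+,Q_{2d-1}]_{\Abb^1}$. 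Now $Q_{2d-1}\simeq\Abb^d\smallsetminus\{0\}$, and $\kappa_*$ is the composite of $\zeta_{d-1}(X)$ with the map $\Hr^{d-1}(X,\Kbf_d^\MW)\to\Hr^{d-1}(X,\Kbf_d^\Mr)$. The first is surjective by Lemma \ref{lem:easy_moore--postnikov}, and the second is surjective by the vanishing $\Hr^d(X,\Ibf^{d+1})=0$ under $(*)$. So $\kappa_*$ is surjective, and the image of $\Delta_{d-1}(\xi)$ contains $(d-1)!\,\Hr^{d-1}(X,\Kbf_d^\Mr)$.

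\textbf{Conclusion.} By Proposition \ref{prop:divisibility_Hd-1Kd_real}, hypothesis $(**')$ makes $\Hr^{d-1}(X,\Kbf_d^\Mr)$ divisible, so $(d-1)!\,\Hr^{d-1}(X,\Kbf_d^\Mr)=\Hr^{d-1}(X,\Kbf_d^\Mr)$ and $\Delta_{d-1}(\xi)$ is surjective. Every fibre of $(p_{d-1})_*$ is therefore a singleton, which proves injectivity.
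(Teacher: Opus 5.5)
\paragraph{Overall.} Apart from the point you flag, your argument is the paper's: reduce to surjectivity of $\Delta_{d-1}(\xi)$, precompose with $S_d$, use $c_d\circ S_d=(d-1)!\,\kappa$, show that $\kappa_*$ surjects onto $\Hr^{d-1}(X,\Kbf_d^\Mr)$, and conclude by $(d-1)!$-divisibility from Proposition \ref{prop:divisibility_Hd-1Kd_real}. For the first factor of $\kappa_*$ you use Lemma \ref{lem:easy_moore--postnikov} instead of \cite[Proposition 1.2.6]{asokEulerClassGroups2021}; that difference is immaterial.

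\paragraph{The gap you flag is real.} The paper writes ``following the proof of Lemma \ref{lem:injectivity_corank_0}'' and thereby tacitly identifies $\Hr^{d-1}(X,\pi_{d-1}^{\Abb^1}(F_{d-1})(\Lc))$ with $\Hr^{d-1}(X,\Kbf_d^\Mr)$. That identification is Lemma \ref{lem:cohomology_homotopy_sheaves_milnor} with $n=d$. It needs $\Hr^{d-1}(X(\Rb),\Zb(L))=0$, which is part of $(**_\Lc)$ but not of $(**')$. So the obstacle you isolate for $d$ odd is genuine, and neither your sketch nor the paper's proof handles it.

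\paragraph{Why step (b) fails.} Under $(**')$ the Bockstein sequence only makes $\Hr^{d-1}(X,\Ibf^{d+1}(\Lc))\cong\Hr^{d-1}(X(\Rb),\Zb(L))$ $2$-divisible. Since $X(\Rb)$ is semialgebraic, this group is finitely generated, hence finite of odd order, so no mod $2$ argument can see it. In fact it need not lie in the image of $\Delta_{d-1}(\xi)$ at all. Here is a concrete case.
\begin{itemize}
\item Take $d=3$, $\xi$ the base point (so $\Lc$ is trivial), and $X$ with $X(\Rb)\cong L(3,1)\setminus\{\mathrm{pt}\}$. Such $X$ exists by Nash--Tognoli, removing one real point by a principal open subset. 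Then $(**')$ holds and $\Hr^2(X(\Rb),\Zb)\cong\Zb/3$.
\item The real cycle class map $\Hr^2(X,\Kbf_3^\MW)\to\Hr^2(X,\Ibf^3)\to\Hr^2(X(\Rb),\Zb)$ is onto. Indeed, on the image of $\Hr^2(X,\Ibf^4)\cong\Zb/3$ it is multiplication by $2$.
\item On the other hand, $\Delta_2(\xi)(M)=M^*\omega_N$ for a class $\omega_N$ on $\GL_N$. The real realization of $\omega_N$ lies in the $2$-torsion group $\Hr^2(\GL_N(\Rb),\Zb)$, so $\im\Delta_2(\xi)$ realizes to $0$ in $\Zb/3$.
\end{itemize}
Hence the fibre of $(p_2)_*$ over the base point has at least three elements, and the lemma as stated fails in this case. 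Topologically, the realification of a complex line bundle on $X(\Rb)$ with $c_1$ of order $3$ is stably trivial but nontrivial.

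\paragraph{The fix.} Assume $(**_\Lc)$ for the determinant $\Lc$ of $(p_{d-1})_*\xi$; every element of that fibre has the same determinant. This is exactly what is available in Corollary \ref{cor:cancellability_corank_1}. With that hypothesis Lemma \ref{lem:cohomology_homotopy_sheaves_milnor} applies and your argument is complete. For $d$ even your argument is already complete as written.
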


\begin{proof}
Following the proof of Lemma \ref{lem:injectivity_corank_0}, we see that it suffices to prove the following statements.
\begin{itemize}
    \item \emph{The $k$-invariant $\kappa_{d-1}:Q_{2d-1}\rightarrow\Kr(\Kbf_d^\MW,d-1)$ induces a surjective homomorphism} \[[X_+,Q_{2d-1}]_{\Abb^1}\xrightarrow{(\kappa_{d-1})_*}\Hr^{d-1}(X,\Kbf_d^\MW)\to\Hr^{d-1}(X,\Kbf_d^\Mr).\] The first homomorphism is surjective by \cite[Proposition 1.2.6]{asokEulerClassGroups2021}. The cokernel of the homomorphism $\Hr^{d-1}(X,\Kbf_d^\MW)\to\Hr^{d-1}(X,\Kbf_d^\Mr)$ is a subgroup of $\Hr^{d}(X,\Ibf^d)$ which is isomorphic to the group $\Hr^d(X(\Rb),\Zb)$ by \cite[Theorem 3.5]{lerbetImageHigherSignature2026} and thus vanishes by the hypothesis $(**')$ (see Remark \ref{rem:relevance_cohomological_assumptions}).
    \item \emph{The group $\Hr^{d-1}(X,\Kbf_d^\Mr)$ is $(d-1)!$-divisible.} This follow from Proposition \ref{prop:divisibility_Hd-1Kd_real} since the hypothesis $(**')$ is satisfied.
\end{itemize}
The proof is therefore complete.
\end{proof}

The next injectivity statement is more delicate. We first use the functoriality of Moore--Postnikov towers, which yields a commutative ladder of motivic spaces:
\begin{equation}\label{eq:morphism_of_moore_postnikov_towers}
\begin{tikzcd}
E_d \arrow[r,"q_d"] \arrow[d] & E_{d-1} \arrow[r] \arrow[d,"p_{d-1}"] & \Kr^{\Gm}(\pi_d^{\Abb^1}(F_{d-1}),d+1) \arrow[d] \\
\widetilde{E}_d \arrow[r,swap,"\widetilde{p}_d"] & \BGL \arrow[r] & \Kr^{\Gm}(\pi_d^{\Abb^1}(F_d),d+1)
\end{tikzcd}
\end{equation}

\begin{lem}\label{lem:corank_one_trivial_fibre_second_stage}
Let $X$ be a smooth affine $\Rb$-scheme of dimension $d$ and let $\xi:X_+\to E_d$ be a map. Denote by $\Lc$ the line bundle classified by the composite \[X_+\xrightarrow{\xi} E_d\xrightarrow{p_d}\BGL\to\Br\mathbb{G}_m\] with the first $k$-invariant and assume that $(**_\Lc)$ holds. Then the fibre of the map $(q_d)_*:[X_+,E_d]_{\Abb^1}\to[X_+,E_{d-1}]_{\Abb^1}$ over $(q_d)_*\xi$ coincides with $\{\xi\}$.
\end{lem}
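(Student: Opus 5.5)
The fibre in question is controlled by the exact sequence of pointed sets attached to the principal fibration $q_d$, as in the discussion following Theorem \ref{theo:moore--postnikov}:
\[\pi_1(\Map(X_+,E_{d-1}),(q_d)_*\xi)\xrightarrow{\delta}\Hr^d(X,\pi_d^{\Abb^1}(F_{d-1})(\Lc))\to[X_+,E_d]_{\Abb^1}\xrightarrow{(q_d)_*}[X_+,E_{d-1}]_{\Abb^1}.\]
The fibre of $(q_d)_*$ over $(q_d)_*\xi$ is the orbit set of $\Hr^d(X,\pi_d^{\Abb^1}(F_{d-1})(\Lc))$ under the action through $\delta$. So the plan is to show that this action is transitive, i.e.\ that $\delta$ is surjective. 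Unlike the corank $0$ case, $E_{d-1}$ is not known to be a group object, so I would not use translation. Instead I would compare with the bottom row of the ladder (\ref{eq:morphism_of_moore_postnikov_towers}).

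\textbf{Step 1: reduce to the corank $0$ boundary map.} Naturality of the Moore--Postnikov towers gives a commutative square
\[\pi_1(\Map(X_+,E_{d-1}),(q_d)_*\xi)\xrightarrow{\delta}\Hr^d(X,\pi_d^{\Abb^1}(F_{d-1})(\Lc))\xrightarrow{\cong}\Hr^d(X,\pi_d^{\Abb^1}(F_d)(\Lc))\]
mapping to $\pi_1(\Map(X_+,\BGL),p_{d-1}(q_d)_*\xi)\xrightarrow{\delta'}\Hr^d(X,\pi_d^{\Abb^1}(F_d)(\Lc))$. The right-hand isomorphism is Lemma \ref{lem:iso_homotopy_sheaves_of_fibres}, which uses $(**_\Lc)$. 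After translating to the base point, $\delta'$ becomes the map $\Delta_d$ from the proof of Lemma \ref{lem:injectivity_corank_0}. It then suffices to find enough loops in $\Map(X_+,E_{d-1})$ whose images in $\Kr_1(X)=[X_+,\GL]_{\Abb^1}$ already have $\Delta_d$-image covering $\Hr^d(X,\Kbf^\Mr_{d+1})$.

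\textbf{Step 2: the candidate loops.} The natural candidates are the matrices $M=S_{d+1}\circ\mu$ with $\mu\in[X_+,Q_{2d+1}]_{\Abb^1}$. For these, the proof of Lemma \ref{lem:injectivity_corank_0} gives $\Delta_d(M)=d!\,\kappa_*(\mu)$. Moreover, $\kappa_*$ is onto $\Hr^d(X,\Kbf^\Mr_{d+1})$, and this group is $d!$-divisible under $(*)$ by Proposition \ref{prop:divisibility_Hd-1Kd_real}. Hence their images exhaust the target, and it remains to show that each such $M$ lifts to a loop in $\Map(X_+,E_{d-1})$ based at $(q_d)_*\xi$.

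\textbf{Step 3: lifting the loops (main obstacle).} The fibre of $p_{d-1}$ is $\Kr(\pi_{d-1}^{\Abb^1}(F_{d-1}),d-1)$, twisted by $\Lc$. The obstruction to lifting a loop $M$ therefore lies in $\Hr^{d-1}(X,\pi_{d-1}^{\Abb^1}(F_{d-1})(\Lc))$; more precisely, it is the image of $M$ under the next boundary map of the long exact sequence of $\Map_{/\BGL}(X_+,E_{d-1})\to\Map(X_+,E_{d-1})\to\Map(X_+,\BGL)$. By Lemma \ref{lem:cohomology_homotopy_sheaves_milnor} with $n=d$, this group is $\Hr^{d-1}(X,\Kbf_d^\Mr)$; its hypotheses hold under $(**_\Lc)$. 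I expect the obstruction to be given, by the same analysis as in \cite[Proposition 6.3]{duEnumeratingNonStableVector2022}, by the Chern class formula
\[c_d(M)+\sum_{j=1}^{d-1}c_j(M)\,c_{d-j}(p_{d-1}(q_d)_*\xi).\]
For $M=S_{d+1}\circ\mu$, each class $c_j(S_{d+1})$ with $j\le d$ lies in $\widetilde{\Hr}^{j-1}(Q_{2d+1},\Kbf^\Mr_j)$, which vanishes for $j\neq d+1$ by \cite[Lemma 4.5]{asokAlgebraicVectorBundles2014}. So the obstruction vanishes and every such $M$ lifts.

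The delicate points are the identification of the obstruction with this Chern class expression, and checking that the twist and base point bookkeeping are compatible with the isomorphism of Lemma \ref{lem:iso_homotopy_sheaves_of_fibres}. If the explicit formula is unavailable, I would fall back on naturality. Since everything is pulled back along $\mu$ from the universal case on $Q_{2d+1}$, it suffices to see that the obstruction class for the universal loop vanishes. That follows from the same vanishing of $\widetilde{\Hr}^{d-1}(Q_{2d+1},\Kbf_d^\Mr)$, up to the cross terms with $\xi$, which involve $c_j(S_{d+1})=0$ for $j\le d$. With this established, $\delta$ is surjective and the fibre is $\{\xi\}$.
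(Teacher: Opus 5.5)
Your proposal is correct and follows the paper's proof essentially step by step. The paper also compares with the corank $0$ tower via Lemma \ref{lem:iso_homotopy_sheaves_of_fibres}, which reduces surjectivity of $\partial$ to lifting the loops $t\circ(S_{d+1})_*\mu$ along $p_{d-1}$. It then kills the lifting obstruction in $\Hr^{d-1}(X,\Kbf_d^\Mr)$ (Lemma \ref{lem:cohomology_homotopy_sheaves_milnor}) using the Chern class formula for $\Delta_{d-1}(\xi)$ and $c_j(S_{d+1})=0$ for $j\leq d$. The only difference is that you get this vanishing from $\widetilde{\Hr}^{j-1}(Q_{2d+1},\Kbf_j^\Mr)=0$ for $j\neq d+1$ \cite[Lemma 4.5]{asokAlgebraicVectorBundles2014}, whereas the paper cites \cite[Proposition 6.9, Remark 6.10]{duEnumeratingNonStableVector2022}.
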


\begin{proof}
The ladder in (\ref{eq:morphism_of_moore_postnikov_towers}) induces a commutative ladder
\begin{center}
\begin{tikzcd}
\pi_0(\Omega_{(q_d)_*\xi}\Map(X_+,E_{d-1})) \arrow[r,"\partial"] \arrow[d,swap,"\protect{(p_{d-1})_*}"] & \Hr^d(X,\pi_{d}^{\Abb^1}(F_{d-1})(\Lc)) \arrow[r] \arrow[d] & \protect{[X_+,E_d]_{\Abb^1}} \arrow[r,"(q_d)_*"] \arrow[d] & \protect{[X_+,E_{d-1}]_{\Abb^1}} \arrow[d,"\protect{(p_{d-1})_*}"] \\
\pi_0(\Omega_{(p_{d-1}q_d)_*\xi}\Map(X_+,\BGL)) \arrow[r,swap,"\widetilde{\partial}"] & \Hr^d(X,\pi_{d}^{\Abb^1}(F_{d})(\Lc)) \arrow[r] & \protect{[X_+,\widetilde{E}_d]_{\Abb^1}} \arrow[r] & \protect{[X_+,\BGL]_{\Abb^1}} 
\end{tikzcd}
\end{center}
To prove that $(q_d)_*^{-1}((q_d)_*\xi)=\{\xi\}$, it is then sufficient to show that $\partial$ is surjective.

The vertical map $\Hr^d(X,\pi_d^{\Abb^1}(F_{d-1})(\Lc))\to\Hr^d(X,\pi_d^{\Abb^1}(F_{d})(\Lc))$ is an isomorphism by Lemma \ref{lem:iso_homotopy_sheaves_of_fibres}, and $\Hr^d(X,\pi_d^{\Abb^1}(F_{d})(\Lc))\cong\Hr^d(X,\Kbf_{d+1}^\Mr)$ as noted in the proof of Lemma \ref{lem:injectivity_corank_0}. Translation along $(p_{d-1}q_d)_*\xi$ induces maps \[t:[X_+,\GL]_{\Abb^1}=\pi_0(\Omega_*\Map(X_+,\BGL))\to\pi_0(\Omega_{(p_{d-1}q_d)_*\xi}\Map(X_+,\BGL))\] and \[\Delta_d(\xi):[X,\GL]_{\Abb^1}\xrightarrow{t}\pi_0(\Omega_{(p_{d-1}q_d)_*\xi}\Map(X_+,\BGL))\to\Hr^d(X,\Kbf_{d+1}^\Mr)\] fitting in a commutative diagram
\begin{equation}\label{eq:second_injectivity_corank_1}
\begin{tikzcd}
& \pi_0(\Omega_{(q_d)_*\xi}\Map(X_+,E_{d-1})) \arrow[r,"\partial"] \arrow[d,swap,"\protect{(p_{d-1})_*}"] &  \Hr^d(X,\pi_{d}^{\Abb^1}(F_{d-1})(\Lc)) \arrow[d,"\cong"] \\
& \pi_0(\Omega_{(p_{d-1}q_d)_*\xi}\Map(X_+,\BGL)) \arrow[r,"\widetilde{\partial}"] & \Hr^d(X,\Kbf_{d+1}^\Mr) \\
\protect{[X_+,Q_{2d+1}]_{\Abb^1}} \arrow[r,swap,"(S_{d+1})_*"] \arrow[ru,"\protect{t(S_{d+1})_*}"] & \protect{[X,\GL]_{\Abb^1}} \arrow[u,"t"] \arrow[ru,swap,"\protect{\Delta_d(\xi)}"] & 
\end{tikzcd}
\end{equation}
where $S_{d+1}:Q_{2d+1}\to\GL$ is the morphism of spaces induced by Suslin matrices. As we saw in the proof of Lemma \ref{lem:injectivity_corank_0}, the map $\Delta_d(\xi)\circ(S_{d+1})_*$ is surjective. To prove that $\partial$ is surjective, it then suffices to prove that the map $t\circ(S_{d+1})_*$ factors through $(p_{d-1})_*$ into a map $f:[X_+,Q_{2d+1}]_{\Abb^1}\to\pi_0(\Omega_{(q_d)_*\xi}\Map(X_+,E_{d-1}))$. Indeed, assuming that $t\circ(S_{d+1})_*=(p_{d-1})_*\circ f$, let $\alpha\in\Hr^d(X,\pi_d^{\Abb^1}(F_{d-1})(\Lc))$: we wish to show that $\alpha=\partial\alpha'$ for some $\alpha'\in\pi_0(\Omega_{(q_d)_*\xi}\Map(X_+,E_{d-1}))$. Let $\beta$ denote the image of $\alpha$ in $\Hr^d(X,\Kbf_{d+1}^\Mr)$. Since $\Delta_d(\xi)\circ(S_{d+1})_*$ is surjective, there exists $\mu\in[X_+,Q_{2d+1}]_{\Abb^1}$ such that $\Delta_d(\xi)\circ(S_{d+1})_*(\mu)=\beta$. Set $\alpha'=f(\mu)$. Then by inspection of (\ref{eq:second_injectivity_corank_1}), we see that the elements $\partial\alpha'$ and $\alpha$ of $\Hr^d(X,\pi_{d-1}(F_{d-1})(\Lc))$ have the same image in $\Hr^d(X,\Kbf_{d+1}^\Mr)$, namely $\beta$. Since the homomorphism $\Hr^d(X,\pi_{d}^{\Abb^1}(F_{d-1})(\Lc))\to\Hr^d(X,\Kbf_{d+1}^\Mr)$ is injective, this shows that $\partial\alpha'=\alpha$.

It remains to see that $t\circ(S_{d+1})_*$ factors through a map $f$ as indicated in the previous paragraph. To do so, we consider the pullback diagram
\begin{center}
\begin{tikzcd}
E_{d-1} \arrow[r] \arrow[d,swap,"p_{d-1}"] & \Br\mathbb{G}_m \arrow[d] \\
\BGL \arrow[r]                        & \Kr^{\mathbf{G}_m}(\pi_{d-1}^{\Abb^1}(F_{d-1}),d)
\end{tikzcd}
\end{center}
of motivic spaces. In view of the isomorphism $\Hr^{d-1}(X,\pi_{d-1}^{\Abb^1}(F_{d-1})(\Lc))\cong\Hr^{d-1}(X,\Kbf_d^\Mr)$ of Lemma \ref{lem:cohomology_homotopy_sheaves_milnor}, it induces a commutative diagram
\begin{center}
\begin{tikzcd}
\pi_0(\Omega_{(q_d)_*\xi}\Map(X_+,E_{d-1})) \arrow[r,"(p_{d-1})_*"] & \pi_0(\Omega_{(p_{d-1}q_d)_*\xi}\Map(X_+,\BGL)) \arrow[r,"\delta'"] & \Hr^{d-1}(X,\Kbf_d^\Mr) \\
\protect{[X,Q_{2d+1}]_{\Abb^1}} \arrow[r,swap,"(S_{d+1})_*"] & \protect{[X_+,\GL]_{\Abb^1}} \arrow[u,"t"] \arrow[ru,swap,"\protect{\Delta_{d-1}(\xi)}"] & 
\end{tikzcd}
\end{center}
whose top row is exact in the usual sense.  In particular, the fibre of $\delta'$ over $0\in\Hr^{d-1}(X,\Kbf_d^\Mr)$ is the image of $(p_{d-1})_*$ so to show that $t\circ(S_{d+1})_*$ factors through $(p_{d-1})_*$, it suffices to prove that the composite \[[X_+,Q_{2d+1}]_{\Abb^1}\xrightarrow{(S_{d+1})_*}[X,\GL]_{\Abb^1}\xrightarrow{\Delta_{d-1}(\xi)}\Hr^{d-1}(X,\Kbf_d^\Mr)\] is trivial. For every $\mu\in[X_+,Q_{2d+1}]_{\Abb^1}$, we have \[\Delta_{d-1}(\xi)\circ(S_{d+1})_*(\mu)=c_d((S_{d+1})_*\mu)+\sum_{j=1}^{d-1}c_j((S_{d+1})_*\mu)c_{d-j}(\xi).\] But $c_m((S_{d+1})_*\mu)=0$ for $m\neq d+1$ by \cite[Proposition 6.9, Remark 6.10]{duEnumeratingNonStableVector2022} again. Therefore $\Delta_{d-1}(\xi)\circ(S_{d+1})_*$ is trivial and the proof is complete.
\end{proof}

\begin{cor}\label{cor:cancellability_corank_1}
Let $X$ be a smooth affine $\Rb$-variety of dimension $d$ and let $\Esc$ be a rank $d-1$ vector bundle on $X$. Suppose that the hypothesis $(**_{\det\Esc})$ is satisfied. Then $\Esc$ is cancellative. In particular, if $(**_\Lc)$ holds for any line bundle $\Lc$ on $X$, then every rank $d-1$ vector bundle on $X$ is cancellative.
\end{cor}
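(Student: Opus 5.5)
We reduce cancellativity of $\Esc$ to the injectivity statement for the fibre of $(\pi_{d-1})_*:[X_+,\BGL_{d-1}]_{\Abb^1}\to[X_+,\BGL]_{\Abb^1}$ over the class of $\Esc$. By Theorem \ref{theo:representability} and the discussion following it, $\Esc$ is cancellative if and only if the fibre of $s_{d-1,\infty}$ over $[\Esc]-[\Osc_X^{d-1}]$ is $\{\Esc\}$. Let $\xi_0\in[X_+,\BGL_{d-1}]_{\Abb^1}$ classify $\Esc$ and suppose $\xi_0'$ classifies some $\Esc'$ with the same image in $[X_+,\BGL]_{\Abb^1}$. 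The map $(\pi_{d-1})_*$ factors as
\[[X_+,\BGL_{d-1}]_{\Abb^1}\xrightarrow{(i_d)_*}[X_+,E_d]_{\Abb^1}\xrightarrow{(q_d)_*}[X_+,E_{d-1}]_{\Abb^1}\xrightarrow{(p_{d-1})_*}[X_+,\BGL]_{\Abb^1},\]
so it suffices to show that $(i_d)_*\xi_0$ and $(i_d)_*\xi_0'$ coincide.

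The argument proceeds in three steps. First, since $\det\Esc\cong\det\Esc'$ (the determinant factors through $\BGL\to\Br\Gm$, or equivalently through $\Kr_0$), both classes determine the same line bundle $\Lc=\det\Esc$. Second, $(**_{\det\Esc})$ implies $(**')$ by Remark \ref{rem:double_star_weaker}, so Lemma \ref{lem:corank_1_first_stage_bijective} applies. Since $(p_{d-1}q_d i_d)_*\xi_0=(p_{d-1}q_d i_d)_*\xi_0'$, injectivity of $(p_{d-1})_*$ gives $(q_d i_d)_*\xi_0=(q_d i_d)_*\xi_0'$. Third, set $\xi=(i_d)_*\xi_0$. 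The line bundle obtained by composing $\xi$ with $p_d$ and $\BGL\to\Br\Gm$ is $\det\Esc=\Lc$, since $p_d\circ i_d=\pi_{d-1}$ by Theorem \ref{theo:moore--postnikov}. Hence $(**_\Lc)$ holds and Lemma \ref{lem:corank_one_trivial_fibre_second_stage} applies: the fibre of $(q_d)_*$ over $(q_d)_*\xi$ is $\{\xi\}$. This forces $(i_d)_*\xi_0'=\xi$.

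Finally, Lemma \ref{lem:bijectivity_corank_1} says $(i_d)_*$ is bijective, so $\xi_0=\xi_0'$, i.e.\ $\Esc\cong\Esc'$. The last assertion of the corollary follows by applying this to each $\Esc$, since $(**_{\det\Esc})$ is then among the assumed hypotheses.

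The only delicate point is the bookkeeping of the line bundle twist. One must check that the torsor attached to $\xi$ in Lemma \ref{lem:corank_one_trivial_fibre_second_stage} is the determinant of $\Esc$. This holds because $\pi_1^{\Abb^1}(\BGL)=\Gm$ via the determinant, and the maps in the Moore--Postnikov tower lie over $\Br\Gm$, so the classifying map to $\Br\Gm$ is unchanged along the tower. Everything else is a direct combination of the cited lemmas.
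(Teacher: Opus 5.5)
Your proposal is correct and follows the paper's own proof: factor $(\pi_{d-1})_*$ as $(p_{d-1})_*\circ(q_d)_*\circ(i_d)_*$, then combine Lemma \ref{lem:bijectivity_corank_1}, Remark \ref{rem:double_star_weaker} with Lemma \ref{lem:corank_1_first_stage_bijective}, and Lemma \ref{lem:corank_one_trivial_fibre_second_stage}. Your check that the torsor in Lemma \ref{lem:corank_one_trivial_fibre_second_stage} is $\det\Esc$ (via $p_d\circ i_d\simeq\pi_{d-1}$) makes explicit a point the paper leaves implicit, and you are right that only injectivity of $(p_{d-1})_*$ is needed.
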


\begin{proof}
Let $\xi:X_+\to\BGL_{d-1}$ classify $\Esc$. As noted previously, the cancellability of $\Esc$ is equivalent to the assertion that the fibre of the map $(\pi_{d-1})_*:[X_+,\BGL_{d-1}]_{\Abb^1}\to[X_+,\BGL]_{\Abb^1}$ over $(\pi_{d-1})_*\xi$ is a singleton. This map factors as \[(\pi_{d-1})_*:[X_+,\BGL_{d-1}]_{\Abb^1}\xrightarrow{(i_d)_*}[X_+,E_d]_{\Abb^1}\xrightarrow{(q_d)_*}[X_+,E_{d-1}]_{\Abb^1}\xrightarrow{(p_{d-1})_*}[X,\BGL]_{\Abb^1}.\] In this diagram, the map $(i_d)_*$ is bijective by Lemma \ref{lem:bijectivity_corank_1}. By Remark \ref{rem:double_star_weaker}, since $(**_{\det\Esc})$ is satisfied, so is $(**')$, hence $(p_{d-1})_*$ is bijective by Lemma \ref{lem:corank_1_first_stage_bijective}. Finally, by Lemma \ref{lem:corank_one_trivial_fibre_second_stage}, the fibre of $(q_d)_*$ over $(q_d)_*(i_d)_*\xi$ is the singleton reduced to $(i_d)_*\xi$. This establishes the first claim, and the second follows immediately.
\end{proof}

\begin{ex}\label{ex:partial_cancellability_corank_1}
Let $X\subseteq\mathbb{P}_\Rb^3$ be the affine variety constructed in Example \ref{exe:different_satisfaction_double_star}. Recall that $X(\Rb)$ has no compact connected component and that $\Hr^2(X(\Rb),\Zb(\Osc(1)_{|X}))=0$. Then by Corollary \ref{cor:cancellability_corank_1}, any rank $2$ bundle on $X$ with determinant $\Osc(1)_{|X}$ (in $\Pic(X)/2$) is cancellative. However, since $\Hr^2(X(\Rb),\Zb)=\Zb/2$ is nonzero, our methods do not allow us to show that rank $2$ bundles whose determinant is a square are cancellative. For instance, we do not know whether stably free rank $2$ bundles on $X$ are free.
\end{ex}

\begin{rem}
Similar methods may be used to study prestabilisation for (special) $K$-theory. Recall that if $A$ is a commutative ring, there is a natural stabilisation group homomorphism $\varphi_n:\mathrm{SL}_n(A)/\Er_n(A)\to\mathrm{SK}_1(A)$ for every $n\geq 3$, where $\mathrm{SK}_1(A)$ is the kernel of the determinant map from the $\Kr$-theory group $\Kr_1(A)$ to $A^\times$ and $\Er_n(A)$ is the subgroup of elementary matrices in $\mathrm{SL}_n(A)$. If $A$ is noetherian of dimension $d$, then $\varphi_n$ is bijective for every $n\geq d+2$ and surjective if $n\geq d+1$. Prestabilisation occurs when $\varphi_n$ is injective or surjective for ``unexpected'' values of $n$, namely for some $n\leq d+1$. In \cite{banerjeeZeroCyclesMennicke2025}, the first author proved that $\varphi_{d+1}$ is injective if $A$ is smooth and has no real maximal ideal.\footnote{In \cite{banerjeeZeroCyclesMennicke2025}, the precise condition on the rings under consideration is that one of the following hypotheses holds: (i) the ring has no real maximal ideal, or (ii) the intersection of these ideals has height $\geq 1$. But if $A$ is a smooth affine algebra of dimension $d$ over $\Rb$, then under (ii), the set of real maximal ideals of $A$ underlies a smooth manifold of dimension $\leq d-1$. Since this smooth manifold is also of dimension $d$, it must be empty so $A$ in fact satisfies (i) (see also \cite[Example 3.16]{lerbetCohomologicalClassificationVector2026}).} This was recently extended to the map $\varphi_d$ in joint work of the first author with K. Chakraborty \cite[Lemma 9.2]{banerjeeImprovedInjectiveStability2026}. Obstruction theory as explained in this section can be used to prove much finer results. Indeed, the same method shows that if $A$ is smooth over $\Rb$, then setting $X=\Spec A$, under the hypothesis $(*)$, the map $\varphi_{d+1}$ is injective due to the divisibility of $\Hr^d(X,\Kbf_{d+2}^\Mr)$, and if $(**')$ holds, then $\varphi_d$ is injective due to the divisibility of $\Hr^{d-1}(X,\Kbf_{d+1}^\Mr)$.
\end{rem}

\subsection{The singular case}

\begin{thm}
	\label{cancellation}
	Let $X = \Spec (A)$ be a normal affine $\mathbb{R}$-variety of dimension $d$. Assume that either $X(\mathbb{R}) = \emptyset$, or the intersection of all real maximal ideals of $A$ has height at least $2$. If $d = 3$, suppose in addition that $X$ is smooth. Then every stably free $A$-module of rank $d-1$ is free.
	
\end{thm}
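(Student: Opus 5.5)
Following the strategy of \cite{faselStablyFreeModules2012}, I would reduce the statement to a question about orbits of unimodular rows. A stably free module $P$ with $P\oplus A^m\simeq A^{d-1+m}$ becomes, after Bass cancellation (we are in dimension $d$, so stable rank considerations apply to rows of length $\geq d+2$), of the form $P\oplus A\simeq A^{d}$. Hence $P$ is the kernel of a unimodular row $v\in\Um_d(A)$. It then suffices to show that every $v\in\Um_d(A)$ can be completed to an invertible matrix, i.e. that $v$ lies in the orbit $e_d\GL_d(A)$.

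The main tool is Suslin's $n!$-theorem together with Mennicke-symbol arguments. First, I would show that $\Um_{d+1}(A)/\Er_{d+1}(A)$ is trivial. Since $A$ has dimension $d$, van der Kallen's group structure identifies this orbit set with the universal Mennicke symbol group $\mathrm{MS}_{d+1}(A)$ (or, in its weak form, $\mathrm{WMS}_{d+1}(A)$). The hypothesis that $X(\Rb)=\emptyset$, or that the real maximal ideals intersect in height $\geq 2$, is exactly the setting of \cite{banerjeeZeroCyclesMennicke2025}, which gives divisibility of this group; this is the singular analogue of the divisibility of $\Hr^d(X,\Kbf_{d+1}^\Mr)$ used in the Remark above. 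Next, following \cite[\S 3]{faselStablyFreeModules2012}, I would pass to length $d$. For $v=(a_1,\dots,a_d)$, by a general position argument (Swan/Bertini, using normality and the height $\geq 2$ condition to keep the real locus away from the chosen ideal), I would arrange that $B=A/(a_1,\dots,a_{d-2})$ is a surface that is smooth away from finitely many points, or sufficiently nice, and that contains no real points, or only real points in a controlled codimension. On $B$ one has $\Um_2(B)$ and the map to the Mennicke symbol $\mathrm{MS}_2(B)$ / $\mathrm{SK}_1$ data. Using the divisibility of the relevant group, I would write the class of $(a_{d-1},a_d)$ as a $(d-1)!$-th power $(b^{(d-1)!},c)$ modulo elementary operations. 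Suslin's theorem then shows that $(a_1,\dots,a_{d-2},b^{(d-1)!},c)$ is completable. When $d=3$, $B$ is a curve and the argument instead uses the smoothness of $X$, as in \cite{faselStablyFreeModules2012}.

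The key step, and the one I expect to be the main obstacle, is this transfer of divisibility to the quotient $B$: I need $\mathrm{MS}$-type groups (or $\mathrm{SK}_1$ of the relevant Milnor patching) of $B$, or of the relative pair $(A,(a_1,\dots,a_{d-2}))$, to be $(d-1)!$-divisible. Over $\Cb$ this follows from torsion-freeness of $\Kr^\Mr_*$ of algebraically closed fields. Over $\Rb$, the $2$-primary part is the issue, and the hypothesis on real points must be used precisely there. The argument would go as follows. Odd divisibility comes from the transfer argument of Proposition \ref{prop:kdivisibility_odd}, applied via $B\otimes_\Rb\Cb$. $2$-divisibility comes from the absence of real points on the chosen quotient: the relevant group is generated by classes supported at closed points with residue field $\Cb$, where every unit is a square. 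This requires choosing the $a_i$ so that $V(a_1,\dots,a_{d-2})$ avoids the real locus in the right codimension, which is exactly where height $\geq 2$ (rather than $\geq 1$) is needed.
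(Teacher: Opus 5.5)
\textbf{There is a genuine gap in the central step.} The problem is structural, not only a matter of divisibility. You cut $v$ down by $d-2$ coordinates to a surface $B=A/(a_1,\dots,a_{d-2})$ and work with the length-$2$ row $(\bar a_{d-1},\bar a_d)$. Suppose some Mennicke-type group or $\mathrm{SK}_1(B)$ were $(d-1)!$-divisible. That only shows the \emph{symbol} of $(\bar a_{d-1},\bar a_d)$ equals the symbol of some $(\bar b^{(d-1)!},\bar c)$.

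To conclude that $v$ and $(a_1,\dots,a_{d-2},b^{(d-1)!},c)$ lie in the same $\Er_d(A)$-orbit, you need the symbol to be injective on orbits. Nothing of the sort is available for length-$2$ rows over a surface:
\begin{itemize}
\item $\Um_2(B)/\Er_2(B)$ carries no group structure;
\item Mennicke symbols in the sense of Definition \ref{weakMennickeSymbol} require length $\geq 3$;
\item $\mathrm{SL}_2(B)/\Er_2(B)\to\mathrm{SK}_1(B)$ is not injective in general, since general injective stability over a surface only starts at $n=4$.
\end{itemize}
This is exactly where the classical Mennicke-symbol argument breaks down in corank one.

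Your $2$-divisibility heuristic also targets the wrong group. ``Classes supported at closed points with residue field $\Cb$'' describes top-degree groups such as $\Hr^d(X,\Kbf^\Mr_{d+1})$. The corank-one group is of type $\Hr^{d-1}(\text{--},\Kbf^\Mr_d)$, built from $\Kr_1$ of function fields of curves, where units are not squares. Its $2$-divisibility instead comes from the vanishing of $\Hr^{d-1}$ and $\Hr^{d}$ of $\Kbf_d^\Mr/2\cong\Hbf^d(2)$. Colliot-Th\'el\`ene--Scheiderer identify these groups with cohomology of the real locus (Lemma \ref{lem:vanishing_no_compact_component}).

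\textbf{What the paper does instead.} It cuts by only $d-3$ coordinates and keeps rows of length $3$, where an injectivity statement exists.
\begin{itemize}
\item Let $\mathcal{J}$ be the product of the ideal of the singular locus and the ideal of the real points. The two height-$\geq 2$ conditions give $\dim A/\mathcal{J}\leq d-2$.
\item Bass's theorem then lets one assume $v_1,\dots,v_{d-1}\in\mathcal{J}$ and $v_d-1\in\mathcal{J}$.
\item Swan's Bertini theorem (after further elementary operations) makes $Y=\Spec B$, with $B=A/\langle v_4,\dots,v_d\rangle$, a smooth threefold. Since $Y$ avoids $V(\mathcal{J})$, it has $Y(\Rb)=\emptyset$. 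For $d=3$ no cutting is needed.
\item On such $Y$, Corollary \ref{cor:mennicke_bijective} shows that $\psi_2\colon\Um_3(B)/\Er_3(B)\to\Hr^2(Y,\Kbf_3^\Mr)$ is a \emph{bijection}. This is the essential new input. It is proved by motivic obstruction theory, via the vanishing of $\Hr^3(Y,\pi_3^{\Abb^1}(\Abb^3\smallsetminus\{0\}))$ (Proposition \ref{prop:vanishing_cohomology_homotopy_sheaf}), together with Jacobson's theorem.
\item The target is divisible (Proposition \ref{prop:divisibility_Hd-1Kd_real}). Transporting the group structure, one writes $(\bar v_1,\bar v_2,\bar v_3)=w^{(d-1)!}$.
\item The Mennicke property of $\psi_2$ gives $w^{(d-1)!}=(\bar a^{(d-1)!},\bar b,\bar c)$, to which Suslin's theorem applies.
\end{itemize}

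\textbf{A smaller point.} Bass only gives $P\oplus A^2\simeq A^{d+1}$. Divisibility of $\mathrm{WMS}_{d+1}(A)$ does not make $\Um_{d+1}(A)/\Er_{d+1}(A)$ trivial. The step to $P\oplus A\simeq A^d$ is cancellation of rank-$d$ modules, which the paper cites as \cite[Theorem 2.8]{banerjeeZeroCyclesMennicke2025}.
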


\proof Let $P$ be a stably free $A$-module of rank $d-1$. Since the result is clear for $d=2$, we assume that $d \geq 3$. By \cite[Theorem 2.8]{banerjeeZeroCyclesMennicke2025}, we have $P \oplus A \simeq A^d$. Hence, $P$ is defined by a unimodular row $v = (v_1, \ldots, v_d) \in \Um_d(A)$. In view of \cite[Theorem 2]{suslinStablyFreeModules1977}, it suffices to show that there exists a unimodular row $(u_1, \ldots, u_d) \in \Um_d(A)$ such that
$v = (u_1^{(d-1)!}, u_2, \ldots, u_d) \text{ in } \Um_d(A)/\mathrm{E}_d(A).$

Suppose that $d \geq 4$. Let $\mathcal{J}_1$ be the ideal of the singular locus of $A$, and let $\mathcal{J}_2$ be the intersection of all real maximal ideals whenever $X(\mathbb{R}) \neq \emptyset$; otherwise, set $\mathcal{J}_2 = A$. Since $A$ is normal, one has $\operatorname{ht}(\mathcal{J}_1) \geq 2$. We set $\mathcal{J} = \mathcal{J}_1 \mathcal{J}_2$; then $\dim(A/\mathcal{J}) \leq d-2$. It follows from Bass's cancellation theorem \cite{bassAlgebraicKtheory1968} that $\mathrm{Um}_d(A/\mathcal{J}) = e_1 \mathrm{E}_d(A/\mathcal{J})$, where $e_1$ is the vector $(1,0,\dots,0)$. We can therefore assume, performing elementary operations if necessary, that $v_d - 1 \in \mathcal{J}$ and $v_1, \ldots, v_{d-1} \in \mathcal{J}$. Using now a Bertini-type theorem due to Swan \cite[Theorem 1.5]{swanCancellationTheoremProjective1974}, we may further alter $v$ by elementary transformations, if necessary, to assume that $B := A/\langle v_4, \ldots, v_d\rangle $ is a three-dimensional smooth $\mathbb{R}$-variety satisfying $Y(\mathbb{R}) = \emptyset$, where $Y = \Spec(B)$.

Let “bar” denote reduction modulo $\langle v_4, \ldots, v_d \rangle$. Then there is a well-defined map
$$\Um_3(B)/\mathrm{E}_3(B) \to \Um_d(A)/\mathrm{E}_d(A)$$
such that $(\overline{x}, \overline{y}, \overline{z}) \mapsto (x, y, z, v_4, \ldots, v_d)$, where $(\overline{x}, \overline{y}, \overline{z}) \in \Um_3(B)$ and $x, y, z \in A$ are lifts of $\overline{x}, \overline{y}, \overline{z}$, respectively. Therefore, it is enough to show that there exists $(\overline{a}, \overline{b}, \overline{c}) \in \Um_3(B)$ such that $(\overline{v}_1, \overline{v}_2, \overline{v}_3) = (\overline{a}^{(d-1)!}, \overline{b}, \overline{c})$ in $\Um_3(B)/\mathrm{E}_3(B)$. We are thus
reduced to the case where $X=\Spec(A)$ is a \emph{smooth} affine threefold over $\mathbb{R}$ such that $X(\mathbb R)=\emptyset$. For the remaining part of the proof, we assume that $A$ is a smooth affine $\Rb$-algebra of dimension $3$ with spectrum $X$ such that $X(\Rb)$ is empty.

By Corollary \ref{cor:mennicke_bijective} (which applies as $X(\Rb)$ is empty), the map $\psi_2:\Um_3(A)/\Er_3(A)\to\Hr^2(X,\Kbf_3^\Mr)$ is bijective and therefore induces a group structure on $\Um_3(A)/\Er_3(A)$; we write this group structure \emph{multiplicatively}. The group $\Um_3(A)/\Er_3(A)$ is then divisible by Proposition \ref{prop:divisibility_Hd-1Kd_real} so there exists $(\overline{a}, \overline{b}, \overline{c})\in\Um_3(A)$ such that $(\overline{v}_1, \overline{v}_2, \overline{v}_3)=(\overline{a}, \overline{b}, \overline{c})^{(d-1)!}$. But by Proposition \ref{Milnor K-cohomology is a Mennicke symbol}, the map $\psi_2$ is also a Mennicke symbol. In particular, the equality $(\overline{a}, \overline{b}, \overline{c})^{(d-1)!}=(\overline{a}^{(d-1)!}, \overline{b}, \overline{c})$ holds in $\Um_3(A)/\Er_3(A)$ by (2) in Definition \ref{weakMennickeSymbol}. It follows that \[(\overline{v}_1, \overline{v}_2, \overline{v}_3) =(\overline{a}, \overline{b}, \overline{c})^{(d-1)!} =(\overline{a}^{(d-1)!}, \overline{b}, \overline{c})\text{ in } \Um_3(A)/\mathrm{E}_3(A).\]
This concludes the proof.\qed

\begin{rem}\label{Fasel'sRemark} In fact, the proof of the above theorem actually gives a bit more. Let $J$ be the ideal defining the singular locus of $A$, and let $v\in \Um_d(A)$ be a
	unimodular row of length $d$ which is in the elementary orbit of $e_1$ in $\frac{\Um_d (R/J)}{\mathrm{E}_d (R/J)}$. Then the stably free module associated to $v$ is free.
	
\end{rem} 
\section{Chow--Witt groups and Euler class groups}

Let $k$ be a field, let $n\geq 1$ be an integer and let $Q_{2n}$ be the smooth affine quadric whose coordinate ring is 
\[
k[Q_{2n}]=k[x_1,\ldots,x_n,y_1,\ldots,y_n,z]/(\sum_{i=1}^nx_iy_i-z(1-z)).
\]
The $k$-variety is equivalent to the space $(\pone)^{\wedge n}$ by \cite{asokSmoothModelsMotivic2017}, and consequently
\[
\piaone_i(Q_{2n})=\begin{cases} 0 & \text{ if $i<n$.} \\ \KMW_n & \text{ if $i=n$.}\end{cases}
\]
In particular, the quadric $Q_{2n}$ is $\A^1$-simply connected if $n\geq 2$, and we thus assume that this condition holds in the sequel. The $\kappa$-invariant 
\[
Q_{2n}\to \mathrm{K}(\KMW_{n},n) 
\]
corresponds (up to a unit in $\mathrm{GW}(k)$) to an element $\CHW^n(Q_{2n})$ described in \cite[Lemma 1.1.6]{asokEulerClassGroups2021}. Using the morphism of sheaves $\KMW_n\to \KM_n$, we obtain a composite map 
\[
Q_{2n}\to \mathrm{K}(\KMW_{n},n)\to \mathrm{K}(\KM_{n},n)
\]
corresponding to a fundamental class $\theta_n\in \CH^n(Q_{2n})$, which is fact the class of the cycle $Z_n:=\{x_1=\cdots=x_n=z=0\}$ of codimension $n$.

The following result will be of interest for us in this section (\cite[Proposition 1.2.2]{asokEulerClassGroups2021}).

\begin{prop}
Let $n\geq 3$ and let $X$ be a smooth scheme of dimension $\leq 2n-2$ over a perfect field $k$, then the stabilization map
\[
[X_+,Q_{2n}]_{\A^1}\to [\Sigma_{S^1}^\infty X_+,\Sigma_{S^1}^\infty Q_{2n}]_{\A^1}
\]
is a bijection, functorial in the first input (for smooth schemes of dimension $\leq 2n-2$). Consequently, the left-hand side is endowed with the structure of an abelian group.
\end{prop}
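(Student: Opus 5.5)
We would deduce this from the Freudenthal-type suspension theorem in $\A^1$-homotopy theory, combined with the fact that $Q_{2n}\simeq(\pone)^{\wedge n}\simeq \Sr^{n}\wedge\Gm^{\wedge n}$ is already a simplicial suspension. Since $Q_{2n}\simeq\Sigma(\Abb^n\smallsetminus\{0\})$, it is an $\A^1$-$(n-1)$-connected space by Morel's connectivity theorem. The idea is that maps from a scheme of small dimension into a highly connected space only see a range in which suspension is an equivalence.

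First, we would show that each simplicial suspension map $\Omega^m\Sigma^m Q_{2n}\to\Omega^{m+1}\Sigma^{m+1}Q_{2n}$ is $\A^1$-$(2n-2)$-connected, meaning that it induces isomorphisms on $\pi_i^{\A^1}$ for $i\leq 2n-2$ and an epimorphism for $i=2n-1$. For $m=0$, this is Morel's $\A^1$-simplicial suspension theorem, as recalled in the proof of Theorem \ref{thm:homotopy_sheaves_of_motivic_spheres} but applied to the $(n-1)$-connected space $Q_{2n}$. The range is $i\leq 2(n-1)$ in the iso range, with the indexing shifted accordingly; this is the point where $n\geq 3$, or at least $n\geq 2$, is needed, for Morel's hypotheses. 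For larger $m$, the connectivity only improves, so the same estimate holds. Passing to the colimit, the map $Q_{2n}\to\Omega^\infty_{S^1}\Sigma^\infty_{S^1}Q_{2n}$ has $\A^1$-$(2n-2)$-connected fibre; more precisely, its homotopy fibre has vanishing $\pi_i^{\A^1}$ for $i\leq 2n-2$.

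Second, we would apply Lemma \ref{lem:easy_moore--postnikov} to this map $\pi:E=Q_{2n}\to B=\Omega^\infty\Sigma^\infty Q_{2n}$. The space $B$ is $\A^1$-connected, and the fibre is $i$-connected with $i\geq 2n-2\geq \dim X$. The lemma, whose Moore--Postnikov argument works verbatim for $\A^1$-simply connected targets, gives bijectivity of $[X_+,Q_{2n}]_{\A^1}\to[X_+,\Omega^\infty\Sigma^\infty Q_{2n}]_{\A^1}=[\Sigma^\infty X_+,\Sigma^\infty Q_{2n}]$, the last step by adjunction. Functoriality in $X$ is clear, since all the maps involved are natural. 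The right-hand side is an abelian group, being a hom-set in the stable category, and we transport this structure along the bijection.

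The main obstacle is the connectivity estimate in the first step. One must check that Morel's suspension theorem really yields connectivity $2n-2$ of the fibre, and not merely $2n-3$, for $Q_{2n}$. If only $2n-3$ were available, Lemma \ref{lem:easy_moore--postnikov} would give surjectivity only. We would then need the finer Moore--Postnikov count: the obstruction to injectivity lies in $\Hr^{2n-2}(X,\pi^{\A^1}_{2n-2}(\mathrm{fib}))$, and we would have to argue that this top-degree contribution is absorbed by the action of $\pi_1$ of the mapping space into the loop space of $B$. We would handle this by identifying the first homotopy sheaf of the fibre with the cokernel of the suspension epimorphism, and then applying Lemma \ref{lem:right_exactness_hd}.
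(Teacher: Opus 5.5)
Your proposal is correct and follows essentially the argument behind the cited result; the paper itself gives no proof and just quotes \cite[Proposition 1.2.2]{asokEulerClassGroups2021}, whose argument is yours: Morel's simplicial suspension theorem makes $Q_{2n}\to\Omega^\infty_{S^1}\Sigma^\infty_{S^1}Q_{2n}$ have $\A^1$-$(2n-2)$-connected fibre, and Moore--Postnikov obstruction theory then gives bijectivity for $\dim X\le 2n-2$. The worry in your last paragraph is unfounded and the fallback is not needed: injectivity on $\pi_i^{\A^1}$ for $i\le 2n-2$ together with surjectivity on $\pi_{2n-1}^{\A^1}$ already forces $\pi_{2n-2}^{\A^1}$ of the fibre to vanish.
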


Under the conditions of the theorem, the $\kappa$-invariant yields a map 
\[
[X_+,Q_{2n}]_{\A^1}\to \Hr^n(X,\KMW)=\CHW^n(X)
\]
which turns out to be a group homomorphism (\cite[p. 2791]{asokEulerClassGroups2021}). On the other hand, the abelian group $[X,Q_{2n}]_{\A^1}$ can be expressed in terms of generators and relations in case $X$ is affine; more precisely, it is naturally isomorphic to the so-called \emph{Euler class group} $\mathrm{E}^{n}(X)$ defined in \cite{bhatwadekarEulerClassesStably2002}.  

\subsection*{Weak Euler class groups}

 Let $R$ be a commutative noetherian ring of dimension $d\ge 3$. In this subsection, we follow the set up of \cite{bhatwadekarEulerClassGroup2000} to define the $n$-th weak Euler class group ${\Er^n_0}(R)$ of $R$, where $2n \ge d + 3$.
 
Let $G$ be the free abelian group generated by symbols $(I)$, where $I \subset R$ is an ideal satisfying the following conditions:
\begin{enumerate}
\item $\Spec(R/I)$ is connected.
\item $\operatorname{ht}({I})=n=\mu(I/I^2)$, where for an $R$-module $M$, we let $\mu(M)$ denote the smallest integer $n$ such that there exists a generating family of $M$ with $n$ elements.
\end{enumerate}

Let $I \subset R$ be an ideal of height $n$ such that 
\[
I = \bigcap_{i=1}^{s} {I}_i,
\]where ${I}_i$ are proper, pairwise comaximal ideals of height $n$, and $\Spec(R/I_i)$ are connected. It follows from \cite[Lemma 4.1]{bhatwadekarEulerClassGroup2000} that such a decomposition is unique. We shall say that the ${I}_i$ are the \emph{connected components} of $I$.

Let $I \subset R$ be an ideal of height $n$ with connected components ${I}_i$ for $i=1,\dots, s$, satisfying further that $\mu(I/I^2) = n$. 
The Chinese remainder theorem shows that $\mu({I}_i/{I}_i^2) = n$ for each $i = 1, \dots, s$ and we set
\[
(I) := \sum_{t=i}^s ({I}_i) \in G.
\]

We now consider the subgroup $H\subset G$ generated by symbols $(J)$, where $\mu(J)=n$ and we define the $n$-th weak Euler class group as 
\[
\mathrm{E}^n_0(R):=G/H.
\]
If $X=\Spec(R)$, we sometimes slightly abuse notation and write $\mathrm{E}^n_0(X)$ in place of $\mathrm{E}^n_0(R)$.

\begin{lem}\label{lem:ontoECGtowECG}
There exists a natural surjection $\gamma^n\colon \mathrm{E}^{n}(R) \longtwoheadrightarrow \mathrm{E}^{n}_0(R)$. 
\end{lem}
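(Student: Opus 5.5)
We recall the definition of the Euler class group $\mathrm{E}^n(R)$ from \cite{bhatwadekarEulerClassesStably2002, bhatwadekarEulerClassGroup2000}. It is the quotient $\mathrm{E}^n(R)=\widetilde G/\widetilde H$. Here $\widetilde G$ is the free abelian group on pairs $(I,\omega_I)$, where $I$ is an ideal of height $n$ with $\Spec(R/I)$ connected and $\omega_I\colon (R/I)^n\twoheadrightarrow I/I^2$ is a surjection (a local orientation). The relations $\widetilde H$ do two things. First, they identify $(I,\omega_I)$ with $\sum_i(I_i,\omega_{I_i})$ via the connected components. Second, they kill $(I,\omega_I)$ whenever $\omega_I$ lifts to a surjection $R^n\twoheadrightarrow I$ (global orientations). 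We also identify orientations differing by $\mathrm{E}_n(R/I)$, or by $\mathrm{SL}_n(R/I)$ depending on the convention. The plan is to define $\gamma^n$ at the level of free groups by forgetting the orientation, and then check that relations go to relations.

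\emph{Step 1: the map on generators.} We define $\widetilde\gamma\colon\widetilde G\to G$ by $(I,\omega_I)\mapsto (I)$. This is well defined because the existence of $\omega_I$ forces $\mu(I/I^2)=n$. Since $\operatorname{ht} I=n$ and $\Spec(R/I)$ is connected, $(I)$ is indeed a generator of $G$. Composing with $G\to G/H=\mathrm{E}^n_0(R)$ gives a homomorphism $\widetilde G\to\mathrm{E}^n_0(R)$.

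\emph{Step 2: relations are respected.}
\begin{itemize}
\item For the component relation, an oriented ideal $(I,\omega_I)$ with components $I_i$ maps to $(I)$. In $G$, the notation $(I)$ is by definition $\sum_i(I_i)$, and the restricted orientations $\omega_{I_i}$ map to $(I_i)$. So this relation maps to $0$ in $G$.
\item For the global relation, if $\omega_I$ lifts to a surjection $R^n\twoheadrightarrow I$, then $\mu(I)=n$, so $(I)\in H$.
\item Changing $\omega_I$ within its class does not affect the image.
\end{itemize}
Hence $\widetilde H$ maps into $H$, and $\widetilde\gamma$ descends to $\gamma^n\colon\mathrm{E}^n(R)\to\mathrm{E}^n_0(R)$.

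One point needs care, and it is the only potential obstacle. In the definition of $\mathrm{E}^n(R)$, the subgroup $\widetilde H$ is generated by $(J,\omega_J)$ with $\omega_J$ global, but $J$ need not have connected $\Spec(R/J)$. In that case $(J,\omega_J)$ means $\sum(J_i,\omega_{J_i})$ in $\widetilde G$. Its image is then $\sum(J_i)=(J)$ in $G$, which lies in $H$ since $\mu(J)=n$. So we must make sure that $H$ is generated by these $(J)$ interpreted through components, and this matches the definition given above.

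\emph{Step 3: surjectivity and naturality.} A generator $(I)$ of $G$ satisfies $\mu(I/I^2)=n$. So there is some surjection $\omega_I\colon(R/I)^n\twoheadrightarrow I/I^2$, and $(I)=\gamma^n(I,\omega_I)$. Hence $\gamma^n$ hits all generators of $\mathrm{E}^n_0(R)$ and is surjective. Naturality, meaning compatibility with the maps used in the literature (for instance pullback along ring maps where these are defined), is clear. The reason is that forgetting the orientation commutes with the operations $(I,\omega_I)\mapsto(IR',\omega_I\otimes R')$ on generators.
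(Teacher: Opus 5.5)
Your proposal is correct and follows the same route as the paper. You define $\gamma^n$ on generators by $(I,\omega_I)\mapsto(I)$, note that pairs with a global orientation land in $H$ because $\mu(J)=n$, and obtain surjectivity by choosing any local orientation of a generator $(J)$. Your extra bookkeeping about connected components is harmless: in Bhatwadekar--Sridharan this is a notational convention for $(I,\omega_I)$ rather than a separate relation, so you are only making explicit what the paper leaves implicit.
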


\begin{proof} 
Recall from \cite[p. 147]{bhatwadekarEulerClassesStably2002} (or \cite[Definition 3.1.5]{asokEulerClassGroups2021}) that the group $\mathrm{E}^{n}(R)$ is obtained as the quotient of the free abelian group on generators of the form $(I,\omega_I)$, where $I$ is an ideal as above and $\omega_I\colon (R/I)^n\to I/I^2$ is a (or rather an equivalence class of) surjection, by the subgroup of complete intersections. With this in mind, it is clear that the map $(I,\omega_I)\mapsto I$ induces a well defined homomorphism $\gamma^n\colon \mathrm{E}^{n}(R) \to \mathrm{E}^{n}_0(R)$. To prove that this map is onto, it suffices to observe that if $(J)\in \mathrm{E}^n_0(R)$ we may choose a local orientation $\omega_J$ of $J$, obtaining a pair $(J,\omega_J)$ in $\mathrm{E}^{n}(R)$. 
\end{proof}

\begin{prop}
  Let $X=\Spec(R)$ be a smooth affine $d$-fold over a field $k$ and $2n\ge d+3$. Then, the composite
\[
\Er^n(R)\to \CHW^{n}(X)\to  \CH^{n}(X)
\]
factors through the map $\gamma^n\colon \Er^n(R)\to \Er^n_0(R)$. Consequently, there is a commutative diagram
\[
\xymatrix{\Er^{n}(R)\ar[r]\ar[d]_-{\gamma^n} & \CHW^{n}(X)\ar[d] \\
\Er_0^{n}(R)\ar[r]& \CH^{n}(X).}
\]
\end{prop}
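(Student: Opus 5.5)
The plan is to show that the composite $\Er^n(R)\to\CHW^n(X)\to\CH^n(X)$ sends every generator $(I,\omega_I)$ to an element depending only on $I$, namely the class of the cycle $[R/I]$. It then kills every element of $\ker\gamma^n$, and the induced map $\Er^n_0(R)\to\CH^n(X)$, $(I)\mapsto[R/I]$, makes the square commute by construction.

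\emph{Step 1: identify the image of a generator.} Under the identification $\Er^n(R)\cong[X_+,Q_{2n}]_{\A^1}$, a pair $(I,\omega_I)$ corresponds to a morphism $g\colon X\to Q_{2n}$, at least up to $\A^1$-homotopy. Following \cite{asokEulerClassGroups2021}, $g$ is built from a lift of $\omega_I$ to generators $a_1,\dots,a_n$ of $I$ modulo $I^2$, together with an element $s$ with $s(1-s)\in I$ and $I=(a_1,\dots,a_n,s)$. The map sends $x_i\mapsto a_i$ and $z\mapsto s$, and the $y_i$ are chosen so that $\sum a_iy_i=s(1-s)$. The image of $(I,\omega_I)$ in $\CH^n(X)$ is $g^*\theta_n=g^*[Z_n]$. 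Since $g^{-1}(Z_n)=V(a_1,\dots,a_n,s)=V(I)$ and the codimension is the expected one $n$, the pullback of the cycle is the cycle of the scheme-theoretic preimage. We use flat pullback after factoring through a graph, or Serre's intersection multiplicity with $X$ regular; $g^{-1}(Z_n)$ is a local complete intersection of the right codimension. Hence $g^*\theta_n=[R/I]$, which is independent of $\omega_I$. Because $\theta_n$ is exactly the image of the $\kappa$-invariant class in $\CH^n(Q_{2n})$, the composite $\Er^n\to\CHW^n\to\CH^n$ evaluated on $(I,\omega_I)$ equals $[R/I]$. The unit ambiguity in $\mathrm{GW}(k)$ dies in $\CH$ up to a rank, which is $\pm1$ and harmless.

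\emph{Step 2: conclude.} The map $G\to\CH^n(X)$, $(I)\mapsto[R/I]$, is additive over connected components. It sends $H$ to zero: if $\mu(J)=n$ with $J=(b_1,\dots,b_n)$, then $(J,\omega)$ for the induced orientation is zero in $\Er^n(R)$, so by Step 1 we get $[R/J]=0$. One could also see this directly, since $[R/J]$ is the pullback of the class of $\{0\}\subset\A^n$, which vanishes in $\CH^n(\A^n)=0$. Thus the map descends to $\Er^n_0(R)\to\CH^n(X)$. By Lemma \ref{lem:ontoECGtowECG}, $\gamma^n$ is surjective, so this is the unique factorization, and commutativity holds generator by generator.

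The main obstacle is Step 1. It requires the explicit description of the bijection $\Er^n(R)\cong[X_+,Q_{2n}]_{\A^1}$ and the check that the pullback $g^*[Z_n]$ equals the naive cycle $[R/I]$ with multiplicity one. I would handle this by the moving/transversality argument of \cite[Lemma 1.1.6, Theorem 3]{asokEulerClassGroups2021}. There the Chow--Witt image of $(I,\omega_I)$ is already computed as the class of $R/I$ with the form given by $\omega_I$. Forgetting the form then gives $[R/I]$ directly.
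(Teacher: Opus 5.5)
Your proposal is correct and follows the same route as the paper: the image of $(I,\omega_I)$ in $\CH^n(X)$ is $f^*[Z_n]$ for the classifying map $f\colon X\to Q_{2n}$, and this class depends only on $I$. You also supply two details the paper leaves implicit: the identification $f^*[Z_n]=[R/I]$, which holds because $Z_n$ is regularly embedded and meets $f$ in the expected codimension, and the check that $(I)\mapsto[R/I]$ kills the relation subgroup $H$, which is needed for the factorization through $G/H$ to be well defined.
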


\begin{proof}
As explained above, the composite $\Er^n(R)\to \CHW^n(X)\to \CH^n(X)$ is induced by the composite
\[
Q_{2n}\xrightarrow{\kappa} \mathrm{K}(\KMW_n,n)\to \mathrm{K}(\KM_n,n)
\]
corresponding to the class of $Z_n$ in $\CH^n(Q_{2n})$. Any element $(I,\omega_I)$ in $\Er^n(R)$ corresponds to a homotopy class of map $f\colon X\to Q_{2n}$, and its image in $\CH^n(X)$ is just $f^*(Z_n)$. The result now follows from the observation that $f^*(Z_n)$ depends only on $I$ in the pair $(I,\omega_I)$.
\end{proof}

\begin{prop}
Let $X$ be a smooth affine real $d$-fold, with $d\geq 4$. If $X$ satisfies the hypothesis $(*)$, the Hurewicz homomorphism 
\[
[X_+,Q_{2d-2}]_{\A^1}\to \CHW^{d-1}(X)
\]
is an isomorphism. 
\end{prop}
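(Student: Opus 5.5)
The approach is the Postnikov-tower argument of Lemma \ref{lem:wms_bijective}, applied to $Q_{2d-2}$ in place of $\Abb^d\smallsetminus\{0\}$. Set $n=d-1\geq 3$. Then $Q_{2n}$ is $\Abb^1$-$(n-1)$-connected with $\pi_n^{\Abb^1}(Q_{2n})=\Kbf_n^\MW$. The Hurewicz map is induced by the first $\kappa$-invariant $\kappa:Q_{2n}\to\Kr(\Kbf_n^\MW,n)$, which factors as
\[Q_{2n}\to Q_{2n}^{(n+1)}\xrightarrow{q}Q_{2n}^{(n)}=\Kr(\Kbf_n^\MW,n).\]
The fibre of $Q_{2n}\to Q_{2n}^{(n+1)}$ is $\Abb^1$-$(n+1)$-connected, that is, $\Abb^1$-$d$-connected. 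Lemma \ref{lem:easy_moore--postnikov} therefore gives a bijection $[X_+,Q_{2n}]_{\Abb^1}\cong[X_+,Q_{2n}^{(n+1)}]_{\Abb^1}$. The fibre of the whole map $\kappa$ is $\Abb^1$-$n$-connected, i.e. $(d-1)$-connected, so the same lemma shows that $\kappa_*$ is surjective. This was also observed via \cite[Proposition 1.2.6]{asokEulerClassGroups2021}.

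Injectivity comes down to injectivity of $q_*$. As in the discussion after Theorem \ref{theo:moore--postnikov}, the fibre of $q_*$ over any class is a quotient of $\widetilde{\Hr}^{n+1}(X_+,\pi_{n+1}^{\Abb^1}(Q_{2n}))=\Hr^d(X,\pi_{d}^{\Abb^1}(Q_{2d-2}))$. No twist enters, because $Q_{2n}$ is $\Abb^1$-simply connected. It therefore suffices to prove that $\Hr^d(X,\pi_d^{\Abb^1}(Q_{2d-2}))=0$.

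This vanishing is the main step. By Theorem \ref{thm:homotopy_sheaves_of_motivic_spheres}(4), applied with $n=d-1\geq 3$, the suspension map $\pi_{d-1}^{\Abb^1}(\Abb^{d-1}\smallsetminus\{0\})\to\pi_d^{\Abb^1}(Q_{2d-2})$ is an epimorphism. Lemma \ref{lem:right_exactness_hd} then makes $\Hr^d(X,\pi_d^{\Abb^1}(Q_{2d-2}))$ a quotient of $\Hr^d(X,\pi_{d-1}^{\Abb^1}(\Abb^{d-1}\smallsetminus\{0\}))$. Note the index shift: this is a group of the form $\Hr^d(X,\pi_{m}^{\Abb^1}(\Abb^m\smallsetminus\{0\}))$ with $m=d-1$, not $m=d$, so Proposition \ref{prop:vanishing_cohomology_homotopy_sheaf} does not apply verbatim. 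Instead I would repeat its proof with the descriptions in Theorem \ref{thm:homotopy_sheaves_of_motivic_spheres}, cases (3) for $d-1\geq 4$ and (2) for $d-1=3$. This reduces the claim to the vanishing of a few groups.

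\emph{Milnor part.} We need $\Hr^d(X,\Kbf_{d+1}^\Mr/24)=0$ when $d\geq 5$, and $\Hr^d(X,\Kbf_5^\Mr/24)=0$ when $d=4$. Both follow from the divisibility in Proposition \ref{prop:divisibility_Hd-1Kd_real} together with Corollary \ref{cor:divisibility_vanishing_multiplication_by_n_non_unique}.

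\emph{Witt part.} For $d=4$ we need $\Hr^4(X,\Ibf^6)=0$. This holds by Jacobson's theorem and $(*)$, since it equals $\Hr^4(X(\Rb),\Zb)=0$ (see Remark \ref{rem:relevance_cohomological_assumptions}).

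\emph{Grothendieck--Witt part.} We need $\Hr^d(X,\GW_{d}^{d-1})=0$. I would obtain this from the Karoubi sequence
\[\GW_d^{d-2}\to\Kbf_d\to\GW_d^{d-1}\to\GW_{d-1}^{d-2}\to\Kbf_{d-1},\]
as in Proposition \ref{prop:vanishing_hd_GW_d+1_d}. There the top cohomology of the two outer pieces was computed from their $d$-fold contractions. Here the contractions are: $(\Kbf_d)_{-d}=\Kbf_0$, so the cokernel term contributes $\Hr^d(X,\Kbf_d^\Mr/2)$, which vanishes by Lemma \ref{lem:vanishing_no_compact_component}. The kernel term has $d$-fold contraction $\ker(\GW^{-1}_{-1}\to\Kbf_{-1})$, which is either $0$ or $\Wbf$-like. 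If a Witt-type piece $\Ibf^{j}$ survives, its top cohomology is again $\Hr^d(X(\Rb),\Zb)$-type and vanishes under $(*)$.

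The cokernel $\Qbf$ in case (3) is handled as in Proposition \ref{prop:vanishing_cohomology_homotopy_sheaf}, because $\Qbf_{-(d-4)}=0$ kills $\Hr^{d-1}$ and $\Hr^d$. Checking this bookkeeping for the shifted Karoubi sequence is where I expect the real obstacle. One could alternatively take a sharper route through the suspension sequence $Q_{2n}\simeq\Sigma(\Abb^n\smallsetminus\{0\})$, using the known computation of $\pi_{n+1}^{\Abb^1}(Q_{2n})$ as an extension of $\GW$-type and $\Kbf^\Mr/24$-type sheaves. Either way, once $\Hr^d(X,\pi_d^{\Abb^1}(Q_{2d-2}))=0$ is established, the orbit sets are singletons, and the Hurewicz map is bijective. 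It is a homomorphism by the earlier remark, hence an isomorphism.
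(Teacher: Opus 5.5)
Your proposal is correct and is essentially the paper's proof: the same Moore--Postnikov reduction to $\Hr^d(X,\pi_d^{\Abb^1}(Q_{2d-2}))=0$, the same use of the suspension epimorphism together with Lemma~\ref{lem:right_exactness_hd}, and the same case split $d\geq 5$ / $d=4$ with identical Milnor and $\Ibf^6$ arguments. The only divergence is the $\GW$ step: the paper simply cites \cite[Theorem 3.7.1]{asokSplittingVectorBundles2014} (which says $\Hr^d(X,\GW_d^{d-1})$ is a quotient of $\Hr^d(X,\Kbf_d^\Mr)/2$), and your Karoubi-sequence route reproves this with no real obstacle, because $f\circ H$ is multiplication by $2$ on the $(d-1)$- and $d$-fold contractions of $\Kbf_d$, and the $d$-fold contraction of the kernel term is $\GW^{-2}_{-1}\cong\mathbf{W}^{-1}\cong\mathbf{W}^3$ (not $\GW^{-1}_{-1}$ as you wrote), which vanishes on fields, so $\Hr^d$ of that term is zero outright.
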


\begin{proof}
The Hurewicz homomorphism is obtained by looking at the Moore--Postnikov tower of the map $Q_{2d-2}\to \Spec(\real)$. Since $X$ is of dimension $d$, the relevant part of the tower takes the form of a fiber sequence 
\[
(\mathrm{K}(\piaone_{d}(Q_{2d-2}),d)\to Q_{2d-2}\xrightarrow{\kappa}\mathrm{K}(\piaone_{d-1}(Q_{2d-2}),d-1)\to \mathrm{K}(\piaone_{d}(Q_{2d-2}),d+1).
\]
For dimension reasons, we have
\[
[X_+,\mathrm{K}(\piaone_{d}(Q_{2d-2}),d+1)]_{\aone}=\Hr^{d+1}(X,\piaone_{d}(Q_{2d-2}))=0
\]
and we are left to show that $[X_+,\mathrm{K}(\piaone_{d}(Q_{2d-2}),d)]_{\aone}=\Hr^d(X,\pi_{d}^{\Abb^1}(Q_{2d-2}))=0$ as well to conclude. Since $d\geq 4$, Theorem \ref{thm:homotopy_sheaves_of_motivic_spheres} shows that the suspension homomorphism $\pi_{d-1}^{\Abb^1}(\Abb^{d-1}\smallsetminus\{0\})\to \pi_{d}^{\Abb^1}(Q_{2d-2})$ is onto. In view of Lemma \ref{lem:right_exactness_hd}, we are left to show that $\Hr^d(X,\pi_{d-1}^{\Abb^1}(\Abb^{d-1}\smallsetminus\{0\}))=0$. If $d\geq 5$, we may use the exact sequence
\[
0\rightarrow\Kbf_{d+1}^\Mr/24\rightarrow\pi_{d-1}^{\Abb^1}(\Abb^{d-1}\smallsetminus \{0\})\rightarrow\GW_{d}^{d-1}
\]
that becomes exact after $d-3$ contractions, yielding an exact sequence
\[
\Hr^d(X,\Kbf_{d+1}^\Mr/24)\to \Hr^d(X,\pi_{d-1}^{\Abb^1}(\Abb^{d-1}\smallsetminus \{0\})) \to \Hr^d(X,\GW_{d}^{d-1})\to 0.
\]
By \cite[Theorem 3.7.1]{asokSplittingVectorBundles2014}, the group $\Hr^d(X,\GW_{d}^{d-1})$ is a quotient of $\Hr^d(X,\Kbf_d^\Mr)/2$, hence vanishes under the hypothesis $(*)$ by Proposition \ref{prop:divisibility_Hd-1Kd_real}, and the same holds for $\Hr^d(X,\Kbf_{d+1}^\Mr/24)$.

If now $d=4$, we use instead the exact sequence
\[
 0\to\Fbf_5\rightarrow\pi_3^{\Abb^1}(\Abb^3\smallsetminus \{0\})\to\GW_4^3\to 0
\] 
and the same arguments as above show that it suffices to prove that $\Hr^4(X,\Fbf_5)=0$ to conclude. Since we have an exact sequence  \[\Ibf^6\rightarrow\Tbf_5\rightarrow\Sbf_5\rightarrow 0\] and the sheaf $\Sbf_5$ is a quotient of $\Kbf_5^\Mr/24$, we are reduced to showing that $\Hr^4(X,\Ibf^6)=0$. Now
\[
\Hr^4(X,\Ibf^6)=\Hr^4(X(\Rb),\Z)=0
\] 
by Remark \ref{rem:relevance_cohomological_assumptions}.
\end{proof}

This result allows to prove the following theorem.

\begin{thm}
Let $X$ be a smooth affine real $d$-fold, with $d\geq 4$. If the hypothesis $(**)$ is satisfied, then, all homomorphisms in the commutative diagram
\[
\xymatrix{\Er^{d-1}(X)\ar[r]\ar[d] & \CHW^{d-1}(X)\ar[d] \\
\Er_0^{d-1}(X)\ar[r]& \CH^{d-1}(X)}
\]
are isomorphisms.
\end{thm}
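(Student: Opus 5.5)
The plan is to show that three of the four maps are isomorphisms; the fourth then follows from commutativity of the square. Set $n=d-1$. Since $d\geq 4$, we have $2n=2d-2\geq d+3$ exactly when $d\geq 5$. For $d=4$ the weak Euler class group is still defined in the paper's setup, provided one accepts $2n\geq d+2$; I will treat the general shape and flag this point. The top horizontal map is the composite of the identification $\Er^{d-1}(X)\cong[X_+,Q_{2d-2}]_{\A^1}$ with the Hurewicz homomorphism. By the previous proposition, which needs only $(*)$ and hence holds under $(**)$, this composite is an isomorphism.

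For the right vertical map $\CHW^{d-1}(X)=\Hr^{d-1}(X,\KMW_{d-1})\to\CH^{d-1}(X)=\Hr^{d-1}(X,\KM_{d-1})$, I would use the long exact sequence attached to (\ref{eq:exact_sequence_mw_ktheory}) in degree $d-1$:
\[
\Hr^{d-1}(X,\Ibf^{d})\to\CHW^{d-1}(X)\to\CH^{d-1}(X)\to\Hr^{d}(X,\Ibf^{d}).
\]
Jacobson's theorem (Remark \ref{rem:relevance_cohomological_assumptions}) identifies $\Hr^{j}(X,\Ibf^{d})$ with $\Hr^j(X(\Rb),\Zb)$ for $n=d\geq d$. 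Under $(**)$ both groups vanish for $j=d-1,d$: the case $j=d$ holds because $(*)$ rules out compact components, and the case $j=d-1$ is assumed directly. Hence the right vertical map is an isomorphism. This is the same argument as in Corollary \ref{cor:mennicke_bijective}.

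The left vertical map $\gamma^{d-1}$ is surjective by Lemma \ref{lem:ontoECGtowECG}. Commutativity then does the rest. The composite through the top-right corner is an isomorphism, and it equals (bottom map)$\circ\gamma^{d-1}$. So $\gamma^{d-1}$ is injective, hence bijective, and the bottom map is then forced to be an isomorphism as well. The one thing to check is that the bottom homomorphism $\Er_0^{d-1}(X)\to\CH^{d-1}(X)$ really is the factorisation given by the preceding proposition. That factorisation is well defined because $\gamma^{d-1}$ is surjective, so it is unique.

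I expect the main obstacle to be bookkeeping rather than new mathematics. First, one has to confirm the applicability range of the weak Euler class group and of the identification $\Er^{d-1}(X)\cong[X_+,Q_{2d-2}]_{\A^1}$ in the edge case $d=4$; for that I would invoke \cite{asokEulerClassGroups2021}, which covers $n\geq 3$ and $\dim X\leq 2n-2$. Second, one has to confirm that Jacobson's identification applies to the untwisted $\Ibf^{d}$ in degrees $d-1$ and $d$, which is the boundary case $n=d$ treated via \cite{asokSplittingVectorBundles2025}.
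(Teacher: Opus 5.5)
Your proposal is correct and follows the paper's proof. The top map is an isomorphism by the preceding proposition. The right vertical map is an isomorphism via $\Hr^{d-1}(X,\Ibf^d)\to\CHW^{d-1}(X)\to\CH^{d-1}(X)\to\Hr^d(X,\Ibf^d)$, whose outer terms vanish under $(**)$. Surjectivity of $\gamma^{d-1}$, together with the fact that the composite through the top-right corner is an isomorphism, then forces the other two maps to be isomorphisms.

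Two small points. In degree $d-1$ the remark does not identify $\Hr^{d-1}(X,\Ibf^d)$ with $\Hr^{d-1}(X(\Rb),\Zb)$; it only gives it as a quotient of $\Hr^d(X(\Rb),\Zb/2)\oplus\Hr^{d-1}(X(\Rb),\Zb)$, which still vanishes under $(**)$. Your flag that the condition $2n\geq d+3$ fails at $d=4$ is fair, and it applies equally to the paper's own statement.
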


\begin{proof}
By the previous proposition, the map $\Er^{d-1}(X)\to \CHW^{d-1}(X)$ is an isomorphism. 

Next, we use the exact sequence of sheaves
\[
0\to \mathbf{I}^d\to \KMW_{d-1}\to \KM_{d-1}\to 0
\]
to obtain an exact sequence
\[
\Hr^{d-1}(X,\mathbf{I}^d)\to \CHW^{d-1}(X)\to \CH^{d-1}(X)\to \Hr^{d}(X,\mathbf{I}^d). 
\]
Since $(**)$ holds, the right-hand and left-hand terms are trivial (see Remark \ref{rem:relevance_cohomological_assumptions}) and consequently the homomorphism $\CHW^{d-1}(X)\to \CH^{d-1}(X)$ is an isomorphism. Thus, the composite
\[
\Er^{d-1}(X)\to \Er^{d-1}_0(X)\to \CH^{d-1}(X)
\]
is an isomorphism and then the first map is injective as well.  On the other hand, $\Er^{d-1}(X)\to \Er^{d-1}_0(X)$ is also onto by construction. It is then an isomorphism, and thus the bottom horizontal homomorphism as well.
\end{proof}

{\begin{footnotesize}
\raggedright
\bibliographystyle{alpha}
\bibliography{splitting.bib}

@unpublished{banerjeeImprovedInjectiveStability2026,
  title = {Improved injective stability for relative $\mathrm{K_1Sp}$-groups},
  author = {Banerjee, Sourjya and Chakraborty, Kuntal},
  year = {2026},
  note = {{2604.09476}},
}

@article{asokMotivicspheres20,
	author = {Asok, Asok and Fasel, Jean and Williams, T. Ben},
	journal = {Invent. math.},
	number = {1},
	pages = {39-73},
	title = {{Motivic spheres and the image of the Suslin-Hurewicz map}},
	volume = {219},
	year = {2020}}

@article{asokAlgebraicVectorBundles2014,
	author = {Asok, Aravind and Fasel, Jean},
	journal = {J. Topology},
	number = {3},
	pages = {894--926},
	title = {Algebraic vector bundles on spheres},
	volume = {7},
	year = {2014}}

@article{asokCohomologicalClassificationVector2014,
	author = {Asok, Aravind and Fasel, Jean},
	journal = {Duke Math. J.},
	number = {14},
	pages = {2561--2601},
	title = {A cohomological classification of vector bundles on smooth affine threefolds},
	volume = {163},
	year = {2014}}

@article{asokSplittingVectorBundles2014,
	author = {Asok, Aravind and Fasel, Jean},
	journal = {J. Am. Math. Soc.},
	number = {4},
	pages = {1031--1062},
	title = {Splitting vector bundles outside the stable range and {$\mathbb{A}^1$}-homotopy sheaves of punctured affine spaces},
	volume = {28},
	year = {2014}}

@article{asokAffineRepresentabilityResults2017a,
	author = {Asok, Aravind and Hoyois, Marc and Wendt, Matthias},
	journal = {Duke Math. J.},
	number = {10},
	pages = {1923--1953},
	shorttitle = {Affine Representability Results in {{A1-homotopy}} Theory, {{I}}},
	title = {Affine representability results in {{$\mathbb{A}^1$-homotopy}} theory, {{I}}: {{Vector}} bundles},
	volume = {166},
	year = {2017}}

@article{asokSimplicialSuspensionSequence2017,
	author = {Asok, Aravind and Wickelgren, Kirsten and Williams, Ben},
	journal = {Geometry \& Topology},
	number = {4},
	pages = {2093--2160},
	publisher = {MSP},
	title = {The simplicial suspension sequence in $\mathbb{A}^1$-homotopy},
	volume = {21},
	year = 2017}

@article{asokSmoothModelsMotivic2017,
	author = {Asok, Aravind and Doran, Brent and Fasel, Jean},
	journal = {Int. Math. Res. Not.},
	number = {6},
	pages = {1890--1925},
	title = {Smooth models of motivic spheres and the clutching construction},
	volume = {2017},
	year = 2017}

@article{asokEulerClassGroups2021,
	author = {Asok, Aravind and Fasel, Jean},
	journal = {J. Eur. Math. Soc.},
	number = {8},
	pages = {2775--2822},
	title = {Euler class groups and motivic stable cohomotopy (with an appendix by {{Mrinal Kanti Das}})},
	volume = {24},
	year = {2021}}

@article{asokP1stabilizationUnstableMotivic,
	author = {Asok, Aravind and Bachmann, Tom and Hopkins, Michael J.},
	journal = {Ann. of Math.},
	title = {On {{$\mathbb{P}^1$-stabilization}} in unstable motivic homotopy theory},
	year = {to appear}}

@unpublished{asokSplittingVectorBundles2025,
	author = {Asok, Asok and Fasel, Jean and Lerbet, Samuel},
	note = {{arXiv:2511.15616}},
	title = {{Splitting vector bundles over real algebraic varieties}},
	year = {2025}}

@article{banerjeeZeroCyclesMennicke2025,
	author = {Banerjee, Sourjya},
	journal = {Int. Math. Res. Not.},
	number = {15},
	pages = {rnaf211},
	title = {Zero {{cycles}}, {{Mennicke symbols}}, and {{$K_1$-stability}} of {{certain real affine algebras}}},
	volume = {2025},
	year = 2025}

@book{bassAlgebraicKtheory1968,
	author = {Bass, Hyman},
	location = {New York (N.Y.) ; Amsterdam},
	number = {11},
	publisher = {W. A. Benjamin},
	series = {Mathematics Lecture Note Series},
	title = {Algebraic {{$K$-theory}}},
	year = {1968}}

@article{bhatwadekarEulerClassGroup2000,
	author = {Bhatwadekar, Shrikant M. and Sridharan, Raja},
	journal = {Compos. Math},
	number = {2},
	pages = {183--222},
	title = {The {{Euler class group}} of a {{Noetherian ring}}},
	volume = {122},
	year = {2000}}

@incollection{bhatwadekarEulerClassesStably2002,
	author = {Bhatwadekar, Shrikant M. and Sridharan, Raja},
	booktitle = {Algebra, {{arithmetic}} and {{geometry}}, {{part I}}, {{II}} ({{Mumbai}}, 2000)},
	editor = {Parimala, Raman},
	pages = {139--158},
	publisher = {Narosa Publ. House},
	series = {Tata {{Inst}}. {{Fund}}. {{Res}}. Stud. Math.},
	title = {On {{Euler}} classes and stably free projective modules},
	volume = {5},
	year = 2002}

@article{blochGerstensConjectureHomology1974,
	author = {Bloch, Spencer and Ogus, Arthur},
	journal = {Ann. sci. \'Ec. norm. sup.},
	number = {2},
	pages = {181--201},
	title = {Gersten's conjecture and the homology of schemes},
	volume = {7},
	year = {1974}}

@article{cadekCohomologyBON1999,
  title = {The cohomology of $\mathrm{BO}(n)$ with twisted integer coefficients},
  author = {{\v C}adek, Martin},
  year = {1999},
  journal = {Kyoto Journal of Mathematics},
  volume = {39},
  number = {2},
  pages = {277--286},
}

@article{colliot-theleneZerocyclesCohomologyReal1996,
	author = {{Colliot-Th{\'e}l{\`e}ne}, Jean-Louis and Scheiderer, Claus},
	journal = {Topology},
	number = {2},
	pages = {533--559},
	title = {Zero-cycles and cohomology on real algebraic varieties},
	volume = {35},
	year = {1996}}

@article{dasOrbitSpacesUnimodular2018,
	author = {Das, Mrinal K. and Tikader, Soumi and Zinna, Md. Ali},
	journal = {Invent. math.},
	number = {1},
	pages = {133--159},
	title = {Orbit spaces of unimodular rows over smooth real affine algebras},
	volume = {212},
	year = {2018}}

@article{duEnumeratingNonStableVector2022,
	author = {Du, Peng},
	journal = {Int. Math. Res. Not.},
	number = {19},
	pages = {14797--14864},
	title = {Enumerating {{non-stable vector bundles}}},
	volume = {2022},
	year = {2022}}

@article{faselRemarksOrbitSets2010a,
	author = {Fasel, Jean},
	journal = {Comment. Math. Helv.},
	number = {1},
	pages = {13--39},
	title = {Some remarks on orbit sets of unimodular rows},
	volume = {86},
	year = {2010}}

@article{faselStablyFreeModules2012,
	author = {Fasel, Jean and Swan, Richard G. and Rao, Ravi A.},
	journal = {Publ. math. IHES},
	month = nov,
	number = {1},
	pages = {223--243},
	title = {On stably free modules over affine algebras},
	volume = {116},
	year = {2012}}

@article{faselSuslinsCancellationConjecture2025,
	author = {Fasel, Jean},
	journal = {Duke Math. J.},
	number = {12},
	pages = {2383--2423},
	title = {Suslin's cancellation conjecture in the smooth case},
	volume = {174},
	year = 2025}

@incollection{faselVasersteinSymbolReal2018,
	address = {New Delhi},
	author = {Fasel, Jean},
	booktitle = {$\mathrm{K}$-{{Theory}}},
	editor = {Srinivas, Vasudevan and Roushon, Sayed K. and Rao, Ravi A. and Parameswaran, A. J. and Krishna, Amalendu},
	number = {19},
	publisher = {Hindustan Book Agency},
	series = {Tata Inst. Fund. Res. Publ.},
	title = {The {{Vaserstein}} symbol on real smooth affine threefolds},
	year = 2018}

@article{grothendieckQuelquesPointsDalgebre1957,
	author = {Grothendieck, Alexander},
	journal = {Tohoku Mathematical Journal},
	number = {2},
	title = {Sur quelques points d'alg{\`e}bre homologique, {{I}}},
	volume = {9},
	year = {1957}}

@book{haesemeyerNormResidueTheorem2019,
	address = {Princeton},
	author = {Haesemeyer, Christian and Weibel, Charles A.},
	number = {375},
	publisher = {Princeton University Press},
	series = {Annals of {{Mathematics Studies}}},
	title = {The {{norm residue theorem}} in {{motivic cohomology}}},
	year = 2019}

@article{hornbostelA1representabilityHermitianKtheory2005,
	author = {Hornbostel, Jens},
	journal = {Topology},
	number = {3},
	pages = {661--687},
	title = {$\mathbb{A}^1$-representability of Hermitian {{$\Kr$-theory}} and {{Witt}} groups},
	volume = {44},
	year = 2005}

@article{jacobsonRealCohomologyPowers2017,
	author = {Jacobson, Jeremy},
	journal = {Ann. $K$-Theory},
	number = {3},
	pages = {357--385},
	title = {Real cohomology and the powers of the fundamental ideal in the {{Witt}} ring},
	volume = {2},
	year = {2017}}

@article{kumarAlgebraicCyclesVector1982,
	author = {Kumar, N. Mohan and Murthy, M. Pavaman},
	journal = {Ann. Math.},
	number = {3},
	pages = {579--591},
	title = {Algebraic cycles and vector bundles over affine three-folds},
	volume = {116},
	year = 1982}

@article{kumarStablyFreeModules1985,
	author = {Kumar, N. Mohan},
	journal = {Am. J. Math.},
	number = {6},
	pages = {1439--1444},
	title = {Stably {{free modules}}},
	volume = {107},
	year = 1985}

@unpublished{lerbetCohomologicalClassificationVector2026,
	author = {Lerbet, Samuel},
	note = {{arXiv:2605.22706}},
	title = {{On the cohomological classification of vector bundles on smooth real affine surfaces and threefolds}},
	year = {2026}}

@article{lerbetImageHigherSignature2026,
	author = {Lerbet, Samuel},
	journal = {Ann. $K$-Theory},
	number = {2},
	pages = {171--212},
	title = {On the image of higher signature maps},
	volume = {11},
	year = 2026}

@book{lurieHigherToposTheory2009,
	address = {Princeton, N.J},
	author = {Lurie, Jacob},
	number = {170},
	publisher = {Princeton university press},
	series = {Annals of Mathematics Studies},
	title = {Higher topos theory},
	year = {2009}}

@book{morelA1AlgebraicTopologyField2012,
	address = {Berlin, Heidelberg},
	author = {Morel, Fabien},
	publisher = {Springer},
	series = {Lecture {{Notes}} in {{Mathematics}}},
	title = {$\mathbb{A}^1$-{{Algebraic Topology}} over a {{Field}}},
	volume = {2052},
	year = {2012}}

@article{morelA1homotopyTheorySchemes1999,
	author = {Morel, Fabien and Voevodsky, Vladimir A.},
	journal = {Publ. math. IHES},
	number = {1},
	pages = {45--143},
	title = {$\mathbb{A}^1$-{h}omotopy {t}heory of {s}chemes},
	volume = {90},
	year = {1999}}

@article{murthyZeroCyclesProjective1994,
	author = {Murthy, M. Pavaman},
	journal = {Ann. of Math.},
	number = {2},
	pages = {405--434},
	title = {Zero cycles and projective modules},
	volume = {140},
	year = {1994}}

@article{nikolausPrincipalinftybundlesGeneral2015,
	author = {Nikolaus, Thomas and Schreiber, Urs and Stevenson, Danny},
	journal = {J. Homotopy Relat. Struct.},
	month = dec,
	number = {4},
	pages = {749--801},
	shorttitle = {Principal $\infty$-Bundles},
	title = {Principal $\infty$-bundles: general theory},
	volume = {10},
	year = {2015}}

@article{rostChowGroupsCoefficients1996,
	author = {Rost, Markus},
	journal = {Documenta Math.},
	pages = {319--393},
	title = {Chow groups with coefficients},
	volume = {1},
	year = {1996}}

@article{schlichtingEulerClassGroups2017,
	author = {Schlichting, Marco},
	journal = {Adv. Math.},
	pages = {1--81},
	title = {Euler class groups and the homology of elementary and special linear groups},
	volume = {320},
	year = {2017}}

@article{schlichtingHermitianKtheoryDerived2017,
	author = {Schlichting, Marco},
	journal = {J. Pure Appl. Algebra},
	number = {7},
	pages = {1729--1844},
	title = {Hermitian {{$K$-theory}}, derived equivalences and {{Karoubi}}'s fundamental theorem},
	volume = {221},
	year = 2017}

@article{suslinCancellationTheoremProjective1977,
	author = {Suslin, Andrei A.},
	journal = {Dokl. Akad. Nauk SSSR},
	number = {4},
	pages = {808--811},
	title = {A cancellation theorem for projective modules over algebras},
	volume = {236},
	year = {1977}}

@article{suslinStablyFreeModules1977,
	author = {Suslin, Andrei A.},
	journal = {Mathematics of the USSR-Sbornik},
	number = {4},
	pages = {479--491},
	title = {On stably free modules},
	volume = {31},
	year = {1977}}

@article{swanCancellationTheoremProjective1974,
	author = {Swan, Richard G.},
	journal = {Invent. math.},
	number = {1},
	pages = {23--43},
	title = {A cancellation theorem for projective modules in the metastable range},
	volume = {27},
	year = {1974}}

@article{voevodskyMotivicCohomology2coefficients2003,
	author = {Voevodsky, Vladimir A.},
	journal = {Publ. math. IHES},
	number = {1},
	pages = {59--104},
	title = {Motivic cohomology with $\mathbb{Z}/2$-coefficients},
	volume = {98},
	year = {2003}}

@book{weibelKbookIntroductionAlgebraic2013,
	address = {Providence, R.I.},
	author = {Weibel, Charles A.},
	number = {vol. 145},
	publisher = {American Mathematical Society},
	series = {Graduate Studies in Mathematics},
	title = {The {{$K$-book}}: an introduction to algebraic {{$K$-theory}}},
	year = {2013}}

@misc{stacks-project,
	author = {The {Stacks project authors}},
	howpublished = {\url{https://stacks.math.columbia.edu}},
	title = {The {Stacks project}},
	year = {2026}}
\end{footnotesize}}

\end{document}